\documentclass[10pt]{article}
\usepackage{amsmath,amssymb,amsthm}
\usepackage{geometry}
\usepackage{graphicx}
\usepackage[hidelinks]{hyperref}
\usepackage{array}

\renewcommand{\arraystretch}{1.05}
\newcolumntype{C}[1]{>{\centering\arraybackslash}p{#1}}
\newcolumntype{L}[1]{>{\raggedright\arraybackslash}p{#1}} 

\newtheorem{theorem}{Theorem}[section]
\newtheorem{lemma}[theorem]{Lemma}
\newtheorem{proposition}[theorem]{Proposition}

\theoremstyle{definition}

\newtheorem{remark}[theorem]{Remark}
\newtheorem{algorithm}[theorem]{Recovery procedure}

\title{Explicit Robin Green's Functions, Resonance Spectra, and Impedance Recovery on Annuli and Spherical Shells}
\author{Ming Yang\\
School of Mathematics, Southeast University\\
Nanjing, 210096 P.R. China\\
\texttt{yangming@seu.edu.cn}}
\date{}

\begin{document}
\maketitle

\begin{abstract}
This paper constructs explicit closed-form Green's functions for the
Helmholtz equation on annular and spherical-shell domains with two
independent Robin impedances on the inner and outer boundaries. Graf's
and the hyperspherical addition theorems reduce each angular mode to an
explicit $2\times2$ linear system, and the resonance spectrum is governed
by a characteristic determinant bilinear in the two impedances. This
bilinearity has a direct inverse-problem consequence: two resonant
frequencies of one non-radial angular mode generate at most two candidate
impedance pairs via an explicit quadratic equation, a third resonance
selects the physical pair, and an exact reflection-symmetry obstruction
identifies where the radial spherical mode cannot recover the
impedances. Spectrally, we prove the branches positive, simple and
strictly increasing in both impedances; derive first-order asymptotics at
the four corners of the impedance plane, with coefficients given by
boundary masses and normal derivatives of the limiting eigenfunctions,
and an explicit mixed second-order coefficient at the Dirichlet--Dirichlet corner, which for the radial mode evaluates in closed form to $2\pi/(R_2-R_1)^3$; establish a low-frequency resonance-free band, with a
second-order threshold expansion explicit in dimension three and a
rational approximation accurate over the whole impedance range; and prove
the universal high-frequency spacing law with a shell-curvature
correction. Jacobian-based sensitivity and conditioning criteria are
included. The kernels and spectra are computable to machine precision,
all asymptotic regimes are confirmed numerically, and the kernels provide
reference solutions for finite-element and boundary-element validation.
\end{abstract}

\noindent\textbf{AMS Subject Classifications (MSC 2020):} 35J05, 35J08, 35J25, 35P05, 35P15, 35R30.

\noindent\textbf{Keywords:} Helmholtz equation, Robin boundary condition, Green's function, spherical shell, annulus, resonance spectrum, impedance recovery.

\section{Introduction}
The Green's function for the Helmholtz equation on annular or spherical-shell domain $\Omega\subset\mathbb{R}^d$ with Robin boundary conditions satisfies
\begin{equation}\label{eq:helmholtz}
\left\{
\begin{array}{ll}
\smallskip\Delta G(x,x_0)+k^2G(x,x_0)=-\delta(x-x_0), & x\in\Omega,\ \ x_0\in\Omega,\\[4pt]
\displaystyle\frac{\partial G}{\partial n} + \alpha G = 0,\quad & x\in\partial\Omega,
\end{array}\right.
\end{equation}
where $k$ is the wave number, $\delta(\cdot)$ is the Dirac delta function, $x_0$ is the fixed source point, $n$ is the unit outer normal to the boundary $\partial\Omega$, and $\alpha>0$ is the impedance parameter, taking two independent constant values $\alpha_1$ (inner) and $\alpha_2$ (outer) on the two boundary components. Robin conditions $\partial_n G+\alpha G = 0$ are the standard models for locally reacting surfaces in acoustics, for Leontovich and approximate thin-layer boundary conditions in electromagnetism, and for $\delta$-shell interactions supported on cavity walls in quantum mechanics \cite{SeniorVolakis1995,EngquistNedelec1994}. This two-parameter boundary value problem is the object of the present paper.

Explicit Green's functions for canonical geometries are more than analytical curiosities. They remove geometric discretisation error, expose the dependence of the field on boundary parameters term by term, and provide reference solutions against which finite-element and boundary-element methods can be validated \cite{Duffy2015,ColtonKress2019,Nedelec2001}. For the Helmholtz equation, however, closed-form kernels are classical mainly for Dirichlet or Neumann data and for a single boundary. The closest canonical predecessor on a shell is the Dirichlet Green's function for a spherical annulus obtained by Martinek \cite{Martinek1965}; an impedance Green's function is available for circular cylindrical waveguides by P\'erez-Arancibia and Dur\'an \cite{PerezArancibiaDuran2010}. What is missing is the two-sided Robin problem on a doubly connected domain: the inner and outer impedance conditions couple the regular and singular radial modes mode by mode, so the construction does not reduce to a scalar coefficient ratio.

The explicit two-parameter form of the kernel makes the resonance map computable to arbitrary accuracy, and this is what the following spectral analysis exploits. In the self-adjoint Robin Laplacian literature, monotonicity of eigenvalues with respect to the impedance, separation of Dirichlet and Neumann spectra, Bessel-quotient bounds, and asymptotics in the large-impedance limit are well developed \cite{Daners2000,Filonov2004,Freitas2021,Kennedy2017,Ognibene2025}. Complex and large Robin parameters introduce additional non-self-adjoint phenomena \cite{BogliKennedyLang2022}, while the principal eigenvalue in the large-parameter regime is classical \cite{LevitinParnovski2008}. Such results are, however, either of a qualitative nature or confined to asymptotic regimes. Here we derive the exact characteristic determinant for the two-impedance shell and use its explicit two-variable structure to resolve the resonance map quantitatively, from the spectral interpolation between the four limiting boundary conditions to the high-frequency spacing law. The same bilinearity that governs the forward spectrum then inverts it.

The paper has three main outcomes. First, Theorem~\ref{thm:shell} gives the explicit Robin Green's function on annular and spherical-shell domains, with the geometric convergence rate of the modal series made explicit. Second, the resonance branches are proved to be positive, simple, real-analytic and strictly increasing in both impedances; their asymptotic scaling is characterised at all four corners of the impedance plane, with coefficients given by the boundary masses and normal derivatives of the limiting eigenfunctions; the mixed second-order coefficient at the Dirichlet--Dirichlet corner is given explicitly and, for the radial mode, in closed form (Theorem~\ref{thm:g0}); a low-frequency transparent interval is established together with an explicit second-order small-impedance threshold expansion in dimension three---reducing to the classical ball asymptotics as $R_1\to0$---and a global rational approximation; and the universal high-frequency spacing law is derived from a complete large-wavenumber expansion of the characteristic determinant together with a parity argument ruling out even inverse powers. Third, the bilinear determinant yields an explicit algebraic recovery rule: two resonant frequencies of one non-radial angular mode generate at most two candidate impedance pairs, a third resonance selects the physical pair, an exact reflection-symmetry obstruction identifies the region in which the radial spherical mode fails to identify the pair, and a Jacobian criterion based on the Hellmann--Feynman boundary masses selects well-conditioned resonance pairs and predicts the noise amplification factor. We emphasise that the inverse part of the paper addresses the two-parameter model problem exactly and does not claim a global uniqueness theorem (see Remarks~\ref{rem:two-candidates} and~\ref{rem:reflection}).

The paper is organised as follows. Section~\ref{sec:forward} constructs the exact forward kernel, first in a unified $d$-dimensional form and then as complete three-dimensional working formulas, with the two-dimensional annulus formulas recorded in Appendix~\ref{app:2D}. Section~\ref{sec:spectrum} analyses the resonance map. Section~\ref{sec:inverse} derives the bilinear determinant and the explicit impedance-recovery formula. Section~\ref{sec:sensitivity} gives sensitivity, conditioning and the recovery algorithm. Section~\ref{sec:numerics} presents numerical benchmarks. Section~\ref{sec:conclusion} concludes.

\section{Explicit forward model on annular and spherical-shell domains}\label{sec:forward}
Let $d\ge2$ and let
\[
\Omega=\{x\in\mathbb R^d: R_1<|x|<R_2\},\qquad
\nu=\frac d2-1.
\]
We seek the Green's function of \eqref{eq:helmholtz} satisfying
\begin{equation}\label{eq:forward-problem}
\begin{cases}
\Delta G(x,x_0)+k^2G(x,x_0)=-\delta(x-x_0),& R_1<|x|<R_2,\\
\mathcal B_1G(x,x_0)=0,& |x|=R_1,\\
\mathcal B_2G(x,x_0)=0,& |x|=R_2,
\end{cases}
\end{equation}
where $R_1<|x_0|<R_2$ and the Robin operators are
\begin{equation}\label{eq:Robin-ops}
\mathcal B_1:=-\partial_r+\alpha_1\quad (r=R_1),\qquad
\mathcal B_2:=\partial_r+\alpha_2\quad (r=R_2).
\end{equation}

\subsection{Decomposition and boundary data}
Write
\begin{equation}\label{eq:decomp}
G(x,x_0)=\phi_d(x-x_0)+v(x),
\end{equation}
where the free-space fundamental solution is
\begin{equation}\label{eq:free}
\phi_d(x-x_0)=\frac{i}{4}\left(\frac{k}{2\pi |x-x_0|}\right)^\nu
H_\nu^{(1)}(k|x-x_0|),
\end{equation}
with $H_\nu^{(1)}$ the Hankel function of the first kind. The function $v$ is smooth in $\Omega$ and satisfies
\begin{equation}\label{eq:v-problem}
\begin{cases}
\Delta v+k^2v=0,& R_1<|x|<R_2,\\
\mathcal B_1v=-\mathcal B_1\phi_d(\cdot-x_0),& |x|=R_1,\\
\mathcal B_2v=-\mathcal B_2\phi_d(\cdot-x_0),& |x|=R_2.
\end{cases}
\end{equation}

Let $\omega=x/|x|$, $\omega_0=x_0/|x_0|$, and let $Y_{\ell,m}$, $m=1,\dots,N(d,\ell)$, be orthonormal hyperspherical harmonics on $\mathbb S^{d-1}$, where
\[
N(d,\ell)=\frac{(2\ell+d-2)\Gamma(\ell+d-2)}{\Gamma(\ell+1)\Gamma(d-1)}
\quad (d\ge3),
\]
and $N(2,0)=1$, $N(2,\ell)=2$ for $\ell\ge1$. 
For each angular index $\ell\ge0$ we set
\begin{equation}\label{eq:mu}
\mu=\ell+\nu,
\end{equation}
the order of the radial Bessel functions in mode $\ell$. 
By the hyperspherical addition theorem \cite{Olver2010},
\begin{equation}\label{eq:addition}
\phi_d(x-x_0)=\frac{i\pi k^{d-2}}{2}
\sum_{\ell=0}^\infty\sum_{m=1}^{N(d,\ell)}
\frac{J_\mu(kr_0)}{(kr_0)^\nu}
\frac{H_\mu^{(1)}(kr)}{(kr)^\nu}
Y_{\ell,m}(\omega)\overline{Y_{\ell,m}(\omega_0)}
\qquad (r>r_0),
\end{equation}
with the analogous $J\leftrightarrow H$ form for $r<r_0$. Since $R_1<r_0<R_2$, the inner boundary sees the regular Bessel factor $J_\mu(kR_1)$, while the outer boundary sees the radiating Hankel factor $H_\mu^{(1)}(kR_2)$.

For any radial profile $f_\ell(r)=r^{-\nu}Z_\mu(kr)$, where $Z_\mu\in\{J_\mu,Y_\mu,H_\mu^{(1)}\}$, direct use of
$Z_\mu'(z)=\frac{\mu}{z}Z_\mu(z)-Z_{\mu+1}(z)$ gives the two Robin traces
\begin{align}
\mathcal B_1[f_\ell]
&=
R_1^{-\nu}\left[
kZ_{\mu+1}(kR_1)+\left(\alpha_1-\frac{\ell}{R_1}\right)Z_\mu(kR_1)
\right],\label{eq:trace1}\\
\mathcal B_2[f_\ell]
&=
R_2^{-\nu}\left[
\left(\alpha_2+\frac{\ell}{R_2}\right)Z_\mu(kR_2)
-kZ_{\mu+1}(kR_2)
\right].\label{eq:trace2}
\end{align}
All dependence on $d$ beyond the angular multiplicity is contained in $\nu$ and the prefactors $R_i^{-\nu}$.

\subsection{Separation of variables and modal system}
Since $v$ solves the homogeneous Helmholtz equation in the shell, separation of variables yields
\begin{equation}\label{eq:v-series}
v(r,\omega)=
\sum_{\ell=0}^\infty\sum_{m=1}^{N(d,\ell)}
\left(
a_\ell \frac{J_\mu(kr)}{(kr)^\nu}
+
b_\ell \frac{Y_\mu(kr)}{(kr)^\nu}
\right)
Y_{\ell,m}(\omega)\overline{Y_{\ell,m}(\omega_0)} .
\end{equation}
Both Bessel functions $J_\mu$ and $Y_\mu$ are required because the origin is excluded from $\Omega$. Matching the coefficients of \eqref{eq:addition} on $r=R_1$ and $r=R_2$ gives, for every $\ell$, the $2\times2$ system
\begin{equation}\label{eq:modal-system}
\begin{pmatrix}
\mathcal B_1[J_\mu] & \mathcal B_1[Y_\mu]\\
\mathcal B_2[J_\mu] & \mathcal B_2[Y_\mu]
\end{pmatrix}
\begin{pmatrix}
a_\ell\\ b_\ell
\end{pmatrix}
=
\begin{pmatrix}
-\dfrac{i\pi k^{d-2}}{2}
\dfrac{H_\mu^{(1)}(kr_0)}{(kr_0)^\nu}
\mathcal B_1[J_\mu]\\[8pt]
-\dfrac{i\pi k^{d-2}}{2}
\dfrac{J_\mu(kr_0)}{(kr_0)^\nu}
\mathcal B_2[H_\mu^{(1)}]
\end{pmatrix}.
\end{equation}
Solving by Cramer's rule yields
\begin{equation}\label{eq:a_l}
a_\ell=
\frac{i\pi k^{d-2}}{2\Delta_\ell}
\left[
\frac{J_\mu(kr_0)}{(kr_0)^\nu}
\mathcal B_2[H_\mu^{(1)}]\,\mathcal B_1[Y_\mu]
-
\frac{H_\mu^{(1)}(kr_0)}{(kr_0)^\nu}
\mathcal B_1[J_\mu]\,\mathcal B_2[Y_\mu]
\right],
\end{equation}
\begin{equation}\label{eq:b_l}
b_\ell=
\frac{i\pi k^{d-2}}{2\Delta_\ell}
\left[
\frac{H_\mu^{(1)}(kr_0)}{(kr_0)^\nu}
\mathcal B_1[J_\mu]\,\mathcal B_2[J_\mu]
-
\frac{J_\mu(kr_0)}{(kr_0)^\nu}
\mathcal B_1[J_\mu]\,\mathcal B_2[H_\mu^{(1)}]
\right],
\end{equation}
where the characteristic determinant is
\begin{equation}\label{eq:Delta}
\Delta_\ell(k;\alpha_1,\alpha_2)
=
\mathcal B_1[J_\mu]\,\mathcal B_2[Y_\mu]
-
\mathcal B_2[J_\mu]\,\mathcal B_1[Y_\mu].
\end{equation}

\begin{theorem}[Unified shell kernel]\label{thm:shell}
Let $d\ge2$, $\nu=d/2-1$, and $\mu=\ell+\nu$. Assume that $\alpha_1,\alpha_2>0$ and that $k^2$ is not an
eigenvalue of the Robin Laplacian \eqref{eq:SL} in any angular mode,
equivalently that $\Delta_\ell(k;\alpha_1,\alpha_2)\ne0$ for every
$\ell\ge0$; since the spectrum is discrete (Theorem~\ref{thm:basic}),
this excludes only a discrete set of wavenumbers. Then the Green's
function of \eqref{eq:forward-problem} exists, is unique, and is
\begin{equation}\label{eq:G-shell}
G(x,x_0)=
\frac{i}{4}\left(\frac{k}{2\pi |x-x_0|}\right)^\nu
H_\nu^{(1)}(k|x-x_0|)
+
\sum_{\ell=0}^\infty\sum_{m=1}^{N(d,\ell)}
\left(
a_\ell \frac{J_\mu(kr)}{(kr)^\nu}
+
b_\ell \frac{Y_\mu(kr)}{(kr)^\nu}
\right)
Y_{\ell,m}(\omega)\overline{Y_{\ell,m}(\omega_0)},
\end{equation}
where $a_\ell,b_\ell$ are given by \eqref{eq:a_l}--\eqref{eq:b_l}, and $\mathcal B_i[\cdot]$ are given by \eqref{eq:trace1}--\eqref{eq:trace2}.
\end{theorem}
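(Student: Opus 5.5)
Uniqueness and existence are handled separately, and the series representation is then justified by checking convergence.

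\emph{Uniqueness.} Suppose $G_1,G_2$ both solve \eqref{eq:forward-problem}. Their difference $w$ satisfies $\Delta w+k^2w=0$ in $\Omega$, since the delta singularities cancel and $w$ is smooth by elliptic regularity. It also satisfies the homogeneous Robin conditions on both spheres. Expanding $w$ in the harmonics $Y_{\ell,m}$, each coefficient $w_{\ell m}(r)$ is a combination $c_1 r^{-\nu}J_\mu(kr)+c_2 r^{-\nu}Y_\mu(kr)$. These Bessel profiles are independent, since their Wronskian $2/(\pi z)$ is nonzero, so this form is general. The Robin conditions give the homogeneous version of the $2\times2$ system \eqref{eq:modal-system}, whose determinant is $\Delta_\ell\ne0$. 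Hence $c_1=c_2=0$ for every $\ell$, and $w\equiv0$. Equivalently, $k^2$ is not a Robin eigenvalue. This is where I will invoke the discreteness and modal characterisation of Theorem~\ref{thm:basic}.

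\emph{Existence and formula.} With the decomposition \eqref{eq:decomp}, it suffices to produce $v$ solving \eqref{eq:v-problem}. I expand the boundary data using \eqref{eq:addition}. On $r=R_1<r_0$ the $J\leftrightarrow H$ form applies, so the mode-$\ell$ coefficient of $\phi_d$ is $\frac{i\pi k^{d-2}}2 H_\mu^{(1)}(kr_0)(kr_0)^{-\nu}\,J_\mu(kr)(kr)^{-\nu}$. On $r=R_2>r_0$ it is $\frac{i\pi k^{d-2}}2 J_\mu(kr_0)(kr_0)^{-\nu}\,H_\mu^{(1)}(kr)(kr)^{-\nu}$. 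Applying $\mathcal B_i$ termwise via \eqref{eq:trace1}--\eqref{eq:trace2} gives the right-hand side of \eqref{eq:modal-system}. The factors $k^{-\nu}$ appear consistently on both sides, so they cancel, which is also why the traces can be stated for $r^{-\nu}Z_\mu$. Termwise application is legitimate because $x_0$ lies strictly inside, so the addition series converges uniformly with derivatives on each boundary sphere. Cramer's rule then gives \eqref{eq:a_l}--\eqref{eq:b_l}. Each truncated modal sum is an exact Helmholtz solution, so it remains to show the series \eqref{eq:G-shell} converges in $C^1$ up to the boundary, and locally in $C^2$, so that it solves \eqref{eq:v-problem}.

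\emph{Convergence (main obstacle).} This step requires uniform large-order asymptotics as $\mu\to\infty$ with $k$ fixed:
\[
J_\mu(z)\sim \frac{(z/2)^\mu}{\Gamma(\mu+1)},\qquad
Y_\mu(z)\sim -\frac{\Gamma(\mu)}{\pi}(z/2)^{-\mu}.
\]
\begin{itemize}
\item $J_{\mu+1}/J_\mu=O(1/\mu)$ and $Y_{\mu+1}/Y_\mu\sim 2\mu/z$. The dominant terms in the traces are then $\mathcal B_1[Y_\mu]\approx -\tfrac{2\mu}{R_1}Y_\mu(kR_1)$ and $\mathcal B_2[J_\mu]\approx \tfrac{2\mu}{R_2}J_\mu(kR_2)$.
\item The leading part of $\Delta_\ell$ is therefore $-\mathcal B_2[J_\mu]\mathcal B_1[Y_\mu]$, whose size is about $\mu^2 J_\mu(kR_2)|Y_\mu(kR_1)|\sim \mu\,(R_2/R_1)^\mu$. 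The competing term $\mathcal B_1[J_\mu]\mathcal B_2[Y_\mu]$ is smaller by the factor $(R_1/R_2)^{2\mu}$.
\item I will bound each of the four products in \eqref{eq:a_l}--\eqref{eq:b_l} using $|H_\mu^{(1)}|\approx|Y_\mu|$ and then evaluate at $r$.
\end{itemize}
The mode-$\ell$ term of $v$ is then controlled by $C\mu^{p}\big[(r r_0/R_2^2)^{\mu}+(R_1^2/(r r_0))^{\mu}+(R_1/R_2)^{2\mu}\big]$. Each ratio is strictly below $1$ uniformly for $r\in[R_1,R_2]$, with $q=\max(r_0/R_2,R_1/r_0)<1$. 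For the angular factor I use $\sum_m Y_{\ell,m}(\omega)\overline{Y_{\ell,m}(\omega_0)}=O(N(d,\ell))=O(\ell^{d-2})$. Together these give geometric convergence with rate $q$, and the same holds after differentiating in $r$ and $\omega$, which costs only polynomial factors in $\ell$. This yields the explicit geometric rate claimed. The care needed is that the asymptotics be uniform and that $\Delta_\ell$ have no small factors for large $\ell$. Since $\alpha_i>0$, the $\alpha$-terms are $O(1)$ against $\mu$, so $\Delta_\ell\neq0$ automatically for large $\ell$, consistent with the hypothesis.
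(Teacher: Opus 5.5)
Your proof is correct. The uniqueness step is essentially the paper's: the difference of two solutions would be a Robin eigenfunction for $k^2$. You simply carry it out mode by mode from $\Delta_\ell\neq0$.

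The existence part follows a genuinely different route. The paper argues abstractly, in three steps:
\begin{itemize}
\item existence comes from the Fredholm alternative;
\item the series converges because $v$ is real-analytic on $\overline\Omega$, so its Laplace--Beltrami expansion converges;
\item a separate rate estimate (Remark~\ref{rem:convergence}) uses the factor $\rho(r)=\max(R_1/r,r/R_2)$, which equals $1$ on $\partial\Omega$, so uniformity up to the boundary has to be recovered from the analyticity of the boundary data and an $L^\infty$ bound for the solution map.
\end{itemize}
You instead build $v$ constructively and estimate the four products in \eqref{eq:a_l}--\eqref{eq:b_l} with large-order Bessel asymptotics. This produces the image-type ratios $rr_0/R_2^2$ and $R_1^2/(rr_0)$, both bounded by $q=\max(r_0/R_2,R_1/r_0)<1$ on all of $[R_1,R_2]$. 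You therefore get geometric convergence, with all derivatives, uniformly on the closed shell in one stroke, and you can verify the Robin conditions termwise.

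What each route buys: yours never uses self-adjointness or Fredholm theory, so it applies verbatim whenever $\Delta_\ell\neq0$ for all $\ell$ (e.g.\ complex impedances), and it gives a sharper boundary-uniform rate. The paper's route is shorter, but it leans on analytic regularity and a less explicit boundary argument.

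There are two harmless slips.
\begin{itemize}
\item The dominant traces are $\mathcal B_1[Y_\mu]\approx R_1^{-\nu}\frac{\ell+2\nu}{R_1}Y_\mu(kR_1)$ and $\mathcal B_2[J_\mu]\approx R_2^{-\nu}\frac{\ell}{R_2}J_\mu(kR_2)$, not $\mp\frac{2\mu}{R_i}$ times the Bessel value. The leading part of $\Delta_\ell$ is a single product whose trace coefficients are of order $\mu$ and nonzero, so your no-small-factor conclusion for large $\ell$ is unaffected.
\item The two cross products give terms of size $(R_1^2r/(r_0R_2^2))^\mu$ and $(r_0R_1^2/(R_2^2r))^\mu$, not $(R_1/R_2)^{2\mu}$. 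These are dominated by your first two ratios, so the rate $q$ stands.
\end{itemize}
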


\begin{proof}
Uniqueness follows from the self-adjointness of the Robin Laplacian with
$\alpha_1,\alpha_2>0$: the difference of two solutions is a
smooth Helmholtz solution satisfying both homogeneous Robin conditions,
hence an eigenfunction with eigenvalue $k^2$, hence identically zero by
the non-resonance hypothesis; existence then follows from the Fredholm
alternative, and the explicit series below realises the solution. The
first term in \eqref{eq:G-shell} is $\phi_d$ and satisfies the singular
equation $\Delta\phi_d+k^2\phi_d=-\delta(x-x_0)$ in $\Omega$. The second
term is the series \eqref{eq:v-series}; by construction its modal
coefficients solve \eqref{eq:modal-system}, so both Robin conditions are
satisfied. Finally, $v$ is real-analytic in $\overline\Omega$ by
analytic elliptic regularity (the boundary data $\mathcal B_i\phi_d$ are
real-analytic on the analytic boundaries); hence, for each fixed $r$,
the function $\omega\mapsto v(r,\omega)$ is real-analytic on the compact
sphere $\mathbb S^{d-1}$, and its Laplace--Beltrami expansion---the
series in \eqref{eq:G-shell}---converges absolutely and uniformly on
compact subsets of $\Omega$, with the geometric rate made explicit in
Remark~\ref{rem:convergence}.
\end{proof}

\begin{remark}[Convergence rate of the modal series]\label{rem:convergence}
The series \eqref{eq:G-shell} converges geometrically. For fixed $z$ and
$\mu\to\infty$, Debye's expansions give $J_\mu(z)\asymp(ez/2\mu)^\mu$ and
$Y_\mu(z)\asymp-(2\mu/ez)^\mu$ up to algebraic factors
\cite[\S10.19]{Olver2010}, so each trace $\mathcal B_i[Z_\mu]$ is of
order $(2\mu/ekR_i)^\mu$ (augmented by the centrifugal factor $\mu/R_i$).
Substitution into \eqref{eq:a_l}--\eqref{eq:b_l} shows that these growing
factors cancel between the two products forming each numerator and the
determinant $\Delta_\ell$, leaving $|a_\ell|+|b_\ell|\le
C\ell^M\rho^\ell$ with $\rho=\max(R_1/r_0,\,r_0/R_2)<1$: the inner
boundary contributes $(R_1/r_0)^\mu$ and the outer boundary
$(r_0/R_2)^\mu$. The same estimate applied to $J_\mu(kr),Y_\mu(kr)$
shows the $\ell$-th term of \eqref{eq:G-shell} is
$O(\ell^M\rho(r)^\ell)$ with $\rho(r)=\max(R_1/r,\,r/R_2)\le\rho$,
uniformly on compact subsets of $\Omega$. Uniformity up to the boundary does not follow from the geometric factor alone, since $\rho(R_1)=1$; it follows from the real-analyticity of the boundary data $-\mathcal B_i\phi_d$ on the analytic boundaries $\{|x|=R_i\}$, which implies that the boundary traces of the series \eqref{eq:v-series} and of its radial derivative converge uniformly on $\{|x|=R_i\}$. Since each partial sum of \eqref{eq:v-series} solves $\Delta v+k^2v=0$ and $k^2$ is not an eigenvalue, the solution map is bounded $L^\infty(\partial\Omega)\to L^\infty(\Omega)$, and the convergence of \eqref{eq:G-shell} is therefore uniform on the closed shell $\overline\Omega$. The
truncation error after $N$ modes is therefore $O(N^M\rho^N)$; for
$R_1=1$, $R_2=2$, $r_0=1.5$ this gives $\rho=3/4$, in agreement with
Figure~\ref{fig:forward}(c).
\end{remark}

\subsection{Three-dimensional spherical shell}
For $d=3$, let $j_\ell,y_\ell$ be the spherical Bessel functions and set
$h_\ell^{(1)}=j_\ell+i y_\ell$. Using
\[
j_\ell(z)=\sqrt{\frac{\pi}{2z}}J_{\ell+1/2}(z),\quad
y_\ell(z)=\sqrt{\frac{\pi}{2z}}Y_{\ell+1/2}(z),\quad
h_\ell^{(1)}(z)=\sqrt{\frac{\pi}{2z}}H_{\ell+1/2}^{(1)}(z),
\]
the free solution is
\[
\phi_3(x-x_0)=\frac{e^{ik|x-x_0|}}{4\pi |x-x_0|}
=
ik\sum_{\ell=0}^\infty\sum_{m=-\ell}^{\ell}
j_\ell(kr_<)h_\ell^{(1)}(kr_>)
Y_{\ell,m}(\omega)\overline{Y_{\ell,m}(\omega_0)},
\]
where $r_<=\min\{r,r_0\},\ \ r_>=\max\{r,r_0\}$. For $z_\ell\in\{j_\ell,y_\ell,h_\ell^{(1)}\}$ define
\begin{align}
\mathcal B_1[z_\ell]
&=
\left(\alpha_1-\frac{\ell}{R_1}\right)z_\ell(kR_1)+k z_{\ell+1}(kR_1),\label{eq:B1-3D}\\
\mathcal B_2[z_\ell]
&=
\left(\alpha_2+\frac{\ell}{R_2}\right)z_\ell(kR_2)-k z_{\ell+1}(kR_2).\label{eq:B2-3D}
\end{align}
The correction has the form
\[
v(r,\omega)=
\sum_{\ell=0}^\infty\sum_{m=-\ell}^{\ell}
\left(
\mathcal A_\ell j_\ell(kr)+\mathcal B_\ell y_\ell(kr)
\right)
Y_{\ell,m}(\omega)\overline{Y_{\ell,m}(\omega_0)} ,
\]
and the modal system is
\[
\begin{pmatrix}
\mathcal B_1[j_\ell] & \mathcal B_1[y_\ell]\\
\mathcal B_2[j_\ell] & \mathcal B_2[y_\ell]
\end{pmatrix}
\begin{pmatrix}\mathcal A_\ell\\ \mathcal B_\ell\end{pmatrix}
=
\begin{pmatrix}
-ik\,h_\ell^{(1)}(kr_0)\mathcal B_1[j_\ell]\\
-ik\,j_\ell(kr_0)\mathcal B_2[h_\ell^{(1)}]
\end{pmatrix}.
\]
Thus
\begin{align}
\mathcal A_\ell
&=
\frac{ik}{\Delta_\ell^{(3)}}
\left(
j_\ell(kr_0)\mathcal B_2[h_\ell^{(1)}]\,\mathcal B_1[y_\ell]
-
h_\ell^{(1)}(kr_0)\mathcal B_1[j_\ell]\,\mathcal B_2[y_\ell]
\right),\label{eq:Al-3D}\\
\mathcal B_\ell
&=
\frac{ik}{\Delta_\ell^{(3)}}
\left(
h_\ell^{(1)}(kr_0)\mathcal B_1[j_\ell]\,\mathcal B_2[j_\ell]
-
j_\ell(kr_0)\mathcal B_1[j_\ell]\,\mathcal B_2[h_\ell^{(1)}]
\right),\label{eq:Bl-3D}
\end{align}
where
\begin{equation}\label{eq:Delta-3D}
\Delta_\ell^{(3)}
=
\mathcal B_1[j_\ell]\,\mathcal B_2[y_\ell]
-
\mathcal B_2[j_\ell]\,\mathcal B_1[y_\ell].
\end{equation}
The spherical-shell Green's function is therefore
\begin{equation}\label{eq:G-3D}
G(x,x_0)=
\frac{e^{ik|x-x_0|}}{4\pi |x-x_0|}
+
ik\sum_{\ell=0}^\infty\sum_{m=-\ell}^{\ell}
\left(
\mathcal A_\ell j_\ell(kr)+\mathcal B_\ell y_\ell(kr)
\right)
Y_{\ell,m}(\omega)\overline{Y_{\ell,m}(\omega_0)}
\end{equation}
with $\mathcal A_\ell,\mathcal B_\ell$ given by \eqref{eq:Al-3D}--\eqref{eq:Bl-3D}. In the Dirichlet limits $\alpha_1,\alpha_2\to+\infty$, the ratios \eqref{eq:Al-3D}--\eqref{eq:Bl-3D} reduce to the classical Mie-type coefficients; for finite $\alpha_1,\alpha_2$ they give the exact impedance-shell resolvent.

\begin{remark}[Reduction to the one-boundary ball]
Formally setting $R_1\to0$ forces the singular-mode coefficient to
vanish ($b_\ell=0$ in \eqref{eq:v-series}, equivalently
$\mathcal B_\ell=0$ in \eqref{eq:Bl-3D}) by regularity at the origin,
and the inner condition disappears: more precisely
$b_\ell=b_\ell(R_1)\to0$ as $R_1\to0^+$ for fixed $k,R_2$, since
$Y_\mu(kR_1)$ blows up while the physical solution stays regular. The
remaining outer condition gives the single-impedance ball formulas,
which explains why the shell problem is structurally different from the
one-boundary ball problem.
\end{remark}

\section{Resonance spectra and the resonance map}\label{sec:spectrum}
The poles of the kernel in Theorem~\ref{thm:shell} are the zeros of the characteristic determinant. In terms of the traces \eqref{eq:trace1}--\eqref{eq:trace2}, the unified characteristic equation is
\begin{equation}\label{eq:char}
\Delta_\ell(k;\alpha_1,\alpha_2)
=
\mathcal B_1[J_\mu]\,\mathcal B_2[Y_\mu]
-
\mathcal B_2[J_\mu]\,\mathcal B_1[Y_\mu]
=0,
\qquad \mu=\ell+\nu .
\end{equation}
Equivalently, using $J_\mu(z),Y_\mu(z)$ at $z_i=kR_i$,
\begin{align}\label{eq:char-explicit}
0={}&
\left[
kJ_{\mu+1}(kR_1)+\Big(\alpha_1-\frac{\ell}{R_1}\Big)J_\mu(kR_1)
\right]
\left[
\Big(\alpha_2+\frac{\ell}{R_2}\Big)Y_\mu(kR_2)-kY_{\mu+1}(kR_2)
\right]
\notag\\
&-
\left[
\Big(\alpha_2+\frac{\ell}{R_2}\Big)J_\mu(kR_2)-kJ_{\mu+1}(kR_2)
\right]
\left[
kY_{\mu+1}(kR_1)+\Big(\alpha_1-\frac{\ell}{R_1}\Big)Y_\mu(kR_1)
\right].
\end{align}
In two dimensions this is \eqref{eq:Delta-n-2D}; in three dimensions it is \eqref{eq:Delta-3D} after the spherical-Bessel reduction.

For fixed $\ell$, equation \eqref{eq:char} is the spectral condition for the regular self-adjoint Sturm--Liouville problem
\begin{equation}\label{eq:SL}
\begin{cases}
u''+\dfrac{d-1}{r}u'
+\left(\lambda-\dfrac{\ell(\ell+d-2)}{r^2}\right)u=0,
& R_1<r<R_2,\\[8pt]
-u'(R_1)+\alpha_1 u(R_1)=0,\qquad
u'(R_2)+\alpha_2 u(R_2)=0,
\end{cases}
\end{equation}
with $\lambda=k^2$.

\begin{theorem}[Basic spectral structure]\label{thm:basic}
Let $d\ge2$, $\nu=d/2-1$, $\ell\ge0$, and $\alpha_1,\alpha_2>0$. For each angular mode $\ell$, the roots
\[
0<k_{\ell,1}(\alpha_1,\alpha_2)<k_{\ell,2}(\alpha_1,\alpha_2)<\cdots
\]
of \eqref{eq:char} are positive, simple, and satisfy $k_{\ell,p}\to+\infty$ as $p\to\infty$. The map
\[
(\alpha_1,\alpha_2)\longmapsto k_{\ell,p}(\alpha_1,\alpha_2)
\]
is real-analytic on $(0,\infty)^2$ and strictly increasing in each impedance:
\begin{equation}\label{eq:monotone}
\frac{\partial k_{\ell,p}}{\partial \alpha_1}>0,
\qquad
\frac{\partial k_{\ell,p}}{\partial \alpha_2}>0 .
\end{equation}
Consequently, every branch is canonically labelled by continuity from the Neumann--Neumann corner $(\alpha_1,\alpha_2)=(0,0)$.
\end{theorem}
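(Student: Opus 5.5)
The plan is to read everything off the regular Sturm--Liouville problem \eqref{eq:SL}. In divergence form it is $-(r^{d-1}u')'+\ell(\ell+d-2)r^{d-3}u=\lambda r^{d-1}u$ on $[R_1,R_2]$, with weight $r^{d-1}$ and smooth coefficients bounded away from zero. The associated closed quadratic form on $H^1(R_1,R_2)$ is
\[
q_{\alpha}[u]=\int_{R_1}^{R_2}\Big(|u'|^2+\frac{\ell(\ell+d-2)}{r^2}|u|^2\Big)r^{d-1}\,dr+\alpha_1R_1^{d-1}|u(R_1)|^2+\alpha_2R_2^{d-1}|u(R_2)|^2 .
\]

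\textbf{Step 1 (discreteness, simplicity, positivity).} Standard regular Sturm--Liouville theory applies. The spectrum is a sequence $\lambda_1<\lambda_2<\cdots\to\infty$. Each eigenvalue is simple, because two solutions satisfying the same separated boundary condition at $R_1$ are proportional. Positivity follows from the form: with $\alpha_i>0$, $q_\alpha[u]>0$ for $u\ne0$, since $q_\alpha[u]=0$ would force $u'=0$ and $u(R_1)=0$. Hence $\lambda_p>0$ and we may set $k_{\ell,p}=\sqrt{\lambda_p}>0$.

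\textbf{Step 2 (eigenvalues are exactly the roots of \eqref{eq:char}).} For $k>0$ the general solution of \eqref{eq:SL} is $r^{-\nu}(aJ_\mu(kr)+bY_\mu(kr))$. By \eqref{eq:trace1}--\eqref{eq:trace2}, imposing both Robin conditions gives the homogeneous $2\times2$ system with determinant $\Delta_\ell$. So $k>0$ is a root of \eqref{eq:char} exactly when $k^2$ is an eigenvalue.

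Three further points are needed:
\begin{itemize}
\item[(i)] Roots $k\le0$ must be discarded. The sign $k\mapsto -k$ only relabels roots, and $k=0$ is excluded by Step 1.
\item[(ii)] Spurious roots must be excluded. Since $J_\mu,Y_\mu$ form a fundamental system, a zero of $\Delta_\ell$ always yields a nontrivial $(a,b)$, so none arise.
\item[(iii)] Simplicity as a root of $\Delta_\ell$ in $k$ is needed for the implicit-function step below.
\end{itemize}

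\textbf{Step 3 (simplicity in $k$).} For (iii), fix the solution $u(r;k)$ normalised at $R_1$ to satisfy the inner condition, e.g. $u(R_1)=1$, $u'(R_1)=\alpha_1$. Then $\Delta_\ell$ equals a nonvanishing factor (a Wronskian constant, $2/(\pi kR_1)$-type) times $F(k)=\mathcal B_2u(\cdot;k)$. Differentiating the equation in $\lambda$ and integrating against $u$ gives the Lagrange identity
\[
R_2^{d-1}\big(u'\,\partial_\lambda u-u\,\partial_\lambda u'\big)\big|_{R_2}=\int_{R_1}^{R_2}u^2r^{d-1}\,dr\neq0 .
\]
At an eigenvalue this yields $\partial_\lambda F\ne0$.

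\textbf{Step 4 (analyticity and the sign of $\partial_{\alpha_i}k_{\ell,p}$).} $\Delta_\ell$ is real-analytic, in fact affine, in $(\alpha_1,\alpha_2)$ and analytic in $k>0$. With $\partial_k\Delta_\ell\ne0$ at roots, the analytic implicit function theorem gives real-analytic branches $k_{\ell,p}(\alpha_1,\alpha_2)$. Simplicity prevents crossings, so the ordering index $p$ is preserved.

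For monotonicity, apply Hellmann--Feynman to the simple eigenvalue $\lambda_p$ with normalised eigenfunction $u_p$ ($\int u_p^2r^{d-1}=1$):
\[
\frac{\partial\lambda_p}{\partial\alpha_i}=R_i^{d-1}u_p(R_i)^2 .
\]
This can equally be derived by differentiating the weak eigen-equation.

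\textbf{Expected main obstacle.} The delicate point is strict positivity, i.e.\ ruling out $u_p(R_i)=0$. If $u_p(R_1)=0$, the Robin condition forces $u_p'(R_1)=\alpha_1u_p(R_1)=0$, and uniqueness for the ODE gives $u_p\equiv0$, a contradiction. The same argument works at $R_2$. Hence $\partial_{\alpha_i}k_{\ell,p}=\partial_{\alpha_i}\lambda_p/(2k_{\ell,p})>0$.

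\textbf{Labelling.} Finally, the form $q_\alpha$ and all of Steps 1--4 extend continuously to $\alpha_i\ge0$. The only exception is $\ell=0$ at $(0,0)$, where $\lambda_1=0$ and so $k=0$ must be allowed at that corner. Continuity of the ordered eigenvalues in $\alpha$, by min--max, then gives the canonical labelling from the Neumann--Neumann corner.
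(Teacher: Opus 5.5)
Your proposal is correct and follows essentially the same route as the paper: classical regular Sturm--Liouville theory with the Rayleigh form for discreteness, simplicity and positivity; the reduction $\Delta_\ell=c\,F(k)$ through a nonvanishing Wronskian factor combined with the Lagrange identity for simple roots; the implicit function theorem for analyticity; and Hellmann--Feynman plus ODE uniqueness for strict monotonicity. Two small notes: in the trace normalisation the factor is exactly $2/(\pi R_1^{d-1})$, independent of $k$, though any nonvanishing analytic factor would do. Your explicit treatment of the root--eigenvalue correspondence and of min--max continuity up to the closed quadrant, including the $\ell=0$ zero mode at the Neumann--Neumann corner, spells out points the paper leaves implicit.
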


\begin{proof}
For fixed $\ell$, \eqref{eq:SL} is a regular self-adjoint Sturm--Liouville problem with separated boundary conditions; by the classical theory its eigenvalues are simple, form an increasing sequence $\lambda_{\ell,1}<\lambda_{\ell,2}<\cdots$ with $\lambda_{\ell,p}\to+\infty$, and the associated eigenfunctions are complete in $L^2((R_1,R_2);r^{d-1}dr)$. The eigenvalues are positive: the Rayleigh quotient of \eqref{eq:SL},
\[
\lambda[u]=\int_{R_1}^{R_2}\Bigl(|u'|^2+\frac{\ell(\ell+d-2)}{r^2}|u|^2\Bigr)r^{d-1}\,dr
+\alpha_1R_1^{d-1}|u(R_1)|^2+\alpha_2R_2^{d-1}|u(R_2)|^2,
\]
satisfies $\lambda[u]>0$ for every $u\not\equiv0$, since for $\ell\ge1$ the centrifugal term vanishes only in the trivial case, while for $\ell=0$ a nonzero constant is charged by the boundary terms ($\alpha_i>0$). Hence $k_{\ell,p}=\sqrt{\lambda_{\ell,p}}>0$ and $k_{\ell,p}\to+\infty$.

We show that the zeros of $\Delta_\ell$ are simple. Let $\Phi(r;k)$ be the solution of the radial equation in \eqref{eq:SL} with $k$-independent Cauchy data $\Phi(R_1;k)=1$, $\Phi'(R_1;k)=\alpha_1$, for which the left boundary condition holds identically in $k$, and put
\[
F(k):=\Phi'(R_2;k)+\alpha_2\Phi(R_2;k).
\]
Writing $f(r)=r^{-\nu}J_\mu(kr)$, $g(r)=r^{-\nu}Y_\mu(kr)$ and expanding $(f,g)$ in the basis $\{\Phi,\Psi\}$, where $\Psi$ is the solution with $\Psi(R_1)=0$, $\Psi'(R_1)=1$, gives
\begin{equation}\label{eq:Delta-F}
\Delta_\ell(k)=\frac{2}{\pi R_1^{d-1}}\,F(k);
\end{equation}
indeed $\mathcal B_1[\Phi]=0$, $\mathcal B_1[\Psi]=-1$, and the change-of-basis determinant equals $W(f,g)(R_1)=2/(\pi R_1^{d-1})$, since $W(f,g)(r)=W(J_\mu,Y_\mu)(kr)\,r^{-2\nu}k=\frac{2}{\pi}r^{-(d-1)}$. (In the spherical normalisation of \eqref{eq:B1-3D}--\eqref{eq:B2-3D}; in the unified normalisation \eqref{eq:trace1}--\eqref{eq:trace2} both sides of \eqref{eq:Delta-F} carry the common inessential factor $R_1^{-\nu}R_2^{-\nu}$.) In particular $F$ and $\Delta_\ell$ have the same zeros with the same multiplicities. Let $F(k_0)=0$ and set $\lambda=k^2$, $u=\Phi(\cdot;k_0)$; then $u$ is the eigenfunction. The derivative $\psi=\partial_\lambda\Phi|_{\lambda=k_0^2}$ solves
\[
\psi''+\frac{d-1}{r}\psi'+\Bigl(k_0^2-\frac{\ell(\ell+d-2)}{r^2}\Bigr)\psi=-u,
\qquad \psi(R_1)=\psi'(R_1)=0,
\]
and $F'(k_0)=2k_0\bigl(\psi'(R_2)+\alpha_2\psi(R_2)\bigr)$. Green's formula applied to the pair $(u,\psi)$---multiplying the equation for $\psi$ by $r^{d-1}u$, the equation for $u$ by $r^{d-1}\psi$, subtracting and integrating over $(R_1,R_2)$---gives the identity
\[
-\int_{R_1}^{R_2}u^2 r^{d-1}\,dr
=
R_2^{d-1}\Bigl(u(R_2)\psi'(R_2)-u'(R_2)\psi(R_2)\Bigr)
=
R_2^{d-1}u(R_2)\,\frac{F'(k_0)}{2k_0},
\]
where the boundary term at $R_1$ vanishes because $\psi$ has zero Cauchy data, and the last equality uses $u'(R_2)=-\alpha_2u(R_2)$. Moreover $u(R_2)\neq0$: otherwise the boundary condition at $R_2$ would give $u'(R_2)=0$, and vanishing Cauchy data at $R_2$ would force $u\equiv0$ by uniqueness for the ODE initial value problem. Hence $F'(k_0)\neq0$ and, by \eqref{eq:Delta-F}, $\partial_k\Delta_\ell(k_0)\neq0$.

Since $\Delta_\ell$ is real-analytic in $(k,\alpha_1,\alpha_2)$ (it is entire in $k$ and affine in each impedance) and $\partial_k\Delta_\ell\neq0$ at every root, the implicit function theorem yields the real-analyticity of each branch $k_{\ell,p}(\alpha_1,\alpha_2)$ on $(0,\infty)^2$.

For monotonicity, $\lambda_{\ell,p}$ is a critical value of $\lambda[\cdot]$ on the unit sphere of $L^2(r^{d-1}dr)$, so the envelope (Hellmann--Feynman) argument gives
\[
\frac{\partial\lambda_{\ell,p}}{\partial\alpha_i}
=R_i^{d-1}|u_{\ell,p}(R_i)|^2,\qquad i=1,2.
\]
This derivative cannot vanish: $u_{\ell,p}(R_i)=0$ would give $u_{\ell,p}'(R_i)=0$ through the boundary condition at $R_i$, hence $u_{\ell,p}\equiv0$ by uniqueness for the ODE initial value problem. Thus $\partial\lambda_{\ell,p}/\partial\alpha_i>0$, and $k_{\ell,p}=\lambda_{\ell,p}^{1/2}>0$ gives \eqref{eq:monotone}. Finally, simplicity and continuity in $(\alpha_1,\alpha_2)$ keep the branches disjoint, so each branch is canonically labelled by continuity from the Neumann--Neumann corner $(\alpha_1,\alpha_2)=(0,0)$.
\end{proof}

\begin{remark}[Zero mode and branch labelling]\label{rem:zeromode}
For $\ell=0$ the Neumann--Neumann problem has the simple eigenvalue $k_{0,1}(0,0)=0$ (the constant eigenfunction); all other limiting eigenvalues $k_{\ell,p}^{NN}$ are positive. For $\alpha_1,\alpha_2>0$ the fundamental branch $k_{0,1}$ is the unique continuation of this zero mode and is positive by the Rayleigh argument in the proof. Consequently the Neumann--Neumann expansion of Theorem~\ref{thm:corners}(i), whose coefficients carry the factor $(k_{\ell,p}^{NN})^{-1}$, is stated for $(\ell,p)\neq(0,1)$; the exceptional branch $k_{0,1}$ instead follows the square-root law \eqref{eq:kc} as $(\alpha_1,\alpha_2)\to(0,0)$.
\end{remark}

\subsection{Asymptotic scaling near the four corners}\label{sec:corners}
Theorem~\ref{thm:basic} gives a spectral interpolation between the four limiting boundary-condition combinations. The following theorem makes the interpolation quantitative: each branch admits an explicit asymptotic expansion at every corner of the closed impedance quadrant, with separately identifiable contributions from $\alpha_1$ and $\alpha_2$.

\begin{theorem}[Corner asymptotics]\label{thm:corners}
Fix the geometry $R_1,R_2$, the angular index $\ell$, and the branch index $p$. Let $k_{\ell,p}^{NN},k_{\ell,p}^{ND},k_{\ell,p}^{DN},k_{\ell,p}^{DD}$ denote the limiting eigenvalues of the Neumann--Neumann, Neumann--Dirichlet, Dirichlet--Neumann, and Dirichlet--Dirichlet problems. Then
\begin{enumerate}
\item[\rm(i)] as $(\alpha_1,\alpha_2)\to(0,0)$, provided $(\ell,p)\neq(0,1)$ (cf.\ Remark~\ref{rem:zeromode}),
\[
k_{\ell,p}=k_{\ell,p}^{NN}+c_1\,\alpha_1+c_2\,\alpha_2
+O(\alpha_1^2+\alpha_1\alpha_2+\alpha_2^2),
\qquad
c_i=\frac{R_i^{d-1}\,|u_{\ell,p}^{NN}(R_i)|^2}{2k_{\ell,p}^{NN}}>0;
\]
\item[\rm(ii)] as $(\alpha_1,\alpha_2)\to(0,\infty)$,
\[
k_{\ell,p}=k_{\ell,p}^{ND}+d_1\,\alpha_1+d_2\,\alpha_2^{-1}
+O(\alpha_1^2+\alpha_1\alpha_2^{-1}+\alpha_2^{-2}),\]
\[
d_1=\frac{R_1^{d-1}|u_{\ell,p}^{ND}(R_1)|^2}{2k_{\ell,p}^{ND}}>0,\qquad 
d_2=-\frac{R_2^{d-1}\,|\partial_r u_{\ell,p}^{ND}(R_2)|^2}
{2k_{\ell,p}^{ND}}<0;
\]
\item[\rm(iii)] as $(\alpha_1,\alpha_2)\to(\infty,0)$,
\[
k_{\ell,p}=k_{\ell,p}^{DN}+e_1\,\alpha_1^{-1}+e_2\,\alpha_2
+O(\alpha_1^{-2}+\alpha_1^{-1}\alpha_2+\alpha_2^{2}),\]
\[e_1=-\frac{R_1^{d-1}\,|\partial_r u_{\ell,p}^{DN}(R_1)|^2}
{2k_{\ell,p}^{DN}}<0,\qquad 
e_2=\frac{R_2^{d-1}|u_{\ell,p}^{DN}(R_2)|^2}{2k_{\ell,p}^{DN}}>0;
\]
\item[\rm(iv)] as $(\alpha_1,\alpha_2)\to(\infty,\infty)$, writing $\mu_i=1/\alpha_i$,
\[
k_{\ell,p}=k_{\ell,p}^{DD}+f_1\,\mu_1+f_2\,\mu_2
+g_{\ell,p}\,\mu_1\mu_2
+O(\mu_1^2+\mu_2^2),
\qquad
f_i=-\frac{R_i^{d-1}\,|\partial_r u_{\ell,p}^{DD}(R_i)|^2}
{2k_{\ell,p}^{DD}}<0,
\]
where, with the coefficients of Lemma~\ref{lem:bilinear} and primes denoting $\partial_k$,
\begin{equation}\label{eq:mixed-coeff}
g_{\ell,p}
=
-\frac{1}{\mathrm A_\ell'}
\left[
\mathrm D_\ell
-\frac{\mathrm B_\ell\mathrm C_\ell'+\mathrm C_\ell\mathrm B_\ell'}
{\mathrm A_\ell'}
+\frac{\mathrm A_\ell''\,\mathrm B_\ell\,\mathrm C_\ell}
{(\mathrm A_\ell')^2}
\right]_{k=k_{\ell,p}^{DD}} .
\end{equation}
The mixed coefficient is in general nonzero; for the shell $R_1=1$, $R_2=2$ and the fundamental branch $p=1$ it evaluates to $g_{\ell,1}\approx 6.28,\,6.20,\,5.96$ for $\ell=0,1,2$. For the radial mode the closed form $g_{0,1}=2\pi/(R_2-R_1)^3$ holds for every shell (Theorem~\ref{thm:g0}).
\end{enumerate}
\end{theorem}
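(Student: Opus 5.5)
The plan is to reduce every corner to a first-order perturbation of a simple eigenvalue of the regular Sturm--Liouville problem \eqref{eq:SL}. Simplicity is guaranteed by Theorem~\ref{thm:basic}, which applies equally to the limiting Dirichlet and Neumann conditions. For each boundary I would choose the parameter that is finite at the corner. Near a Neumann end this is $\alpha_i$ itself. Near a Dirichlet end it is $\mu_i=1/\alpha_i$, with the condition rewritten as $\mu_i\,\partial_n u+u=0$, and $\mu_i=0$ giving Dirichlet.

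Multiplying each trace in \eqref{eq:trace1}--\eqref{eq:trace2} by $\mu_i$ where needed, the normalised determinant $\widetilde\Delta_\ell(k;s_1,s_2)$ (with $s_i\in\{\alpha_i,\mu_i\}$) is still affine in each $s_i$. It is entire in $k$ and extends real-analytically to a neighbourhood of the corner $(s_1,s_2)=(0,0)$. The simplicity argument of Theorem~\ref{thm:basic}, via \eqref{eq:Delta-F} and Green's formula, applies verbatim at the limiting problem and gives $\partial_k\widetilde\Delta_\ell\ne0$ at $k^{XY}_{\ell,p}$, except at the NN zero mode $(\ell,p)=(0,1)$. There $k=0$ and the $k$-parametrisation degenerates, which is why it is excluded. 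The implicit function theorem then makes $k_{\ell,p}$ real-analytic in $(s_1,s_2)$ near the corner. This yields a Taylor expansion with $O(|s|^2)$ remainder, and continuity of the labelling connects it to the branch indexed by $p$.

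The linear coefficients come from the Hellmann--Feynman identity of Theorem~\ref{thm:basic}, $\partial_{\alpha_i}\lambda=R_i^{d-1}|u(R_i)|^2$, together with $\partial k=\partial\lambda/(2k)$. At a Neumann end I evaluate at $\alpha_i=0$, which gives $c_i,d_1,e_2$. At a Dirichlet end the chain rule gives $\partial_{\mu_i}\lambda=-\alpha_i^2R_i^{d-1}|u(R_i)|^2$. On the Robin eigenfunction the boundary condition gives $\alpha_i u(R_i)=\mp u'(R_i)$, so this equals $-R_i^{d-1}|u'(R_i)|^2$. This expression is continuous up to $\mu_i=0$ once eigenfunctions are normalised in $L^2(r^{d-1}dr)$ and converge in $C^1$, which follows from the analytic dependence just established. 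This gives $d_2,e_1,f_i$ and their signs. The $C^1$ convergence of the normalised eigenfunction is the only analytic point needing care, and analyticity of $\Phi$ in the parameters handles it.

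For (iv) I need the mixed second derivative $\partial_{\mu_1}\partial_{\mu_2}k$ at $(0,0)$. The only structural input is that the scaled determinant is bilinear: by Lemma~\ref{lem:bilinear}, $\mu_1\mu_2\Delta_\ell=\mathrm A_\ell+\mathrm B_\ell\mu_1+\mathrm C_\ell\mu_2+\mathrm D_\ell\mu_1\mu_2$. Here $\mathrm A_\ell$ is the DD determinant, so $\mathrm A_\ell(k^{DD})=0$. I would differentiate $\mathrm A(k)+\mathrm B(k)\mu_1+\mathrm C(k)\mu_2+\mathrm D(k)\mu_1\mu_2=0$ implicitly. First order gives $k_{\mu_1}=-\mathrm B/\mathrm A'$ and $k_{\mu_2}=-\mathrm C/\mathrm A'$, which must match $f_1,f_2$ as a consistency check. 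Applying $\partial_{\mu_1}\partial_{\mu_2}$ at the origin gives
\[
\mathrm A''k_{\mu_1}k_{\mu_2}+\mathrm A'k_{\mu_1\mu_2}+\mathrm B'k_{\mu_2}+\mathrm C'k_{\mu_1}+\mathrm D=0,
\]
and substituting the first-order derivatives yields exactly \eqref{eq:mixed-coeff}. The pure second-order terms are absorbed into $O(\mu_1^2+\mu_2^2)$.

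The quoted numerical values of $g_{\ell,1}$ are evaluations of \eqref{eq:mixed-coeff}, and the closed form for $\ell=0$ is deferred to Theorem~\ref{thm:g0}. I expect the main obstacle to be nonvanishing of $\mathrm A'_\ell$ at $k^{DD}$, i.e.\ simplicity of the Dirichlet limit in the $k$ variable with the $\mu$-scaled normalisation. I would handle it by rescaling \eqref{eq:Delta-F} with the Cauchy data $\Phi(R_1)=\mu_1$, $\Phi'(R_1)=1$. This form remains valid at $\mu_1=0$ and reduces the claim to the same Green's-formula argument.
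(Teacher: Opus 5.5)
Your proposal follows the paper's proof essentially step for step. It applies the implicit function theorem to the $\mu_i$-rescaled bilinear determinant at simple limiting roots, uses Hellmann--Feynman for the linear coefficients, and performs one further implicit differentiation for $g_{\ell,p}$. Your chain-rule identity $\partial_{\mu_i}\lambda=-\alpha_i^2\partial_{\alpha_i}\lambda=-R_i^{d-1}|u'(R_i)|^2$ is just the differential form of the paper's integration of $\partial_{\alpha_2}\lambda$ from $\alpha_2$ to $\infty$. Your rescaled Cauchy data $\Phi(R_1)=\mu_1$, $\Phi'(R_1)=1$ spell out the simplicity of the Dirichlet-corner roots, which the paper only asserts.

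There is one bookkeeping slip. Since $\mathrm B_\ell$ multiplies $\alpha_1$ in \eqref{eq:bilinear}, the rescaled determinant is $\mu_1\mu_2\Delta_\ell=\mathrm A_\ell+\mathrm C_\ell\mu_1+\mathrm B_\ell\mu_2+\mathrm D_\ell\mu_1\mu_2$. Hence $k_{\mu_1}=-\mathrm C_\ell/\mathrm A_\ell'$ and $k_{\mu_2}=-\mathrm B_\ell/\mathrm A_\ell'$. Your labels are swapped, so your consistency check against $f_1,f_2$ would fail whenever $f_1\neq f_2$, for example for every $\ell\ge1$ in $d=3$. This does not affect the result: \eqref{eq:mixed-coeff} is symmetric under $\mathrm B_\ell\leftrightarrow\mathrm C_\ell$, so your value of $g_{\ell,p}$ is correct.
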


\begin{proof}
All four expansions are obtained from the implicit function theorem applied to \eqref{eq:bilinear} (Lemma~\ref{lem:bilinear}), compactifying the impedance plane by $\mu_i=1/\alpha_i$ at the Dirichlet sides. Near each corner the defining equation can be written as $F(k;\text{parameters})=0$ with $F$ real-analytic and $\partial_kF\neq0$ at the corner root: at $(0,0)$ one takes $F=\Delta_\ell$, and $\partial_k\Delta_\ell(k_{\ell,p}^{NN};0,0)=\mathrm D_\ell'(k_{\ell,p}^{NN})\neq0$ by Theorem~\ref{thm:basic}; at the remaining corners one multiplies \eqref{eq:bilinear} by $\mu_i=1/\alpha_i$, and the limiting equations
\begin{equation}\label{eq:corner-eqs}
\mathrm C_\ell(k)=0\ \ (\text{N--D corner}),\qquad
\mathrm B_\ell(k)=0\ \ (\text{D--N corner}),\qquad
\mathrm A_\ell(k)=0\ \ (\text{D--D corner})
\end{equation}
are the characteristic equations of the corresponding self-adjoint limiting problems, whose roots are simple by the argument of Theorem~\ref{thm:basic}. Hence all derivatives below exist, and the real-analytic dependence upgrades the implicit-function expansions to the stated orders.

(i) Since $\Delta_\ell(k;0,0)=\mathrm D_\ell(k)$, $\partial_{\alpha_1}\Delta_\ell=\mathrm A_\ell\alpha_2+\mathrm B_\ell$ and $\partial_{\alpha_2}\Delta_\ell=\mathrm A_\ell\alpha_1+\mathrm C_\ell$, implicit differentiation at $k=k_{\ell,p}^{NN}$ gives
\[
k_{\alpha_1}=-\frac{\mathrm B_\ell}{\mathrm D_\ell'},\qquad
k_{\alpha_2}=-\frac{\mathrm C_\ell}{\mathrm D_\ell'}.
\]
By the envelope argument of Theorem~\ref{thm:basic}---equivalently, by differentiating the Rayleigh quotient of the Neumann--Neumann problem---$k_{\alpha_i}=R_i^{d-1}|u_{\ell,p}^{NN}(R_i)|^2/(2k_{\ell,p}^{NN})$; this is positive because a Neumann eigenfunction vanishing at an endpoint would have vanishing Cauchy data there and hence be trivial.

(ii) With $\mu_2=1/\alpha_2$,
\[
\mu_2\Delta_\ell(k;\alpha_1,\mu_2^{-1})
=\mathrm A_\ell(k)\alpha_1+\mathrm B_\ell(k)\alpha_1\mu_2
+\mathrm C_\ell(k)+\mathrm D_\ell(k)\mu_2,
\]
so implicit differentiation at $(k_{\ell,p}^{ND},0,0)$ gives
\[
k_{\alpha_1}=-\frac{\mathrm A_\ell}{\mathrm C_\ell'},\qquad
k_{\mu_2}=-\frac{\mathrm D_\ell}{\mathrm C_\ell'}.
\]
Hellmann--Feynman at finite $(\alpha_1,\alpha_2)$, followed by $\alpha_2\to\infty$, identifies $k_{\alpha_1}=R_1^{d-1}|u_{\ell,p}^{ND}(R_1)|^2/(2k_{\ell,p}^{ND})>0$. For the Dirichlet side we determine the sign and the value of the coefficient of $\alpha_2^{-1}$ as follows. By Theorem~\ref{thm:basic} each branch is strictly increasing in $\alpha_2$ and bounded above by its Dirichlet limit $k_{\ell,p}^{ND}$, so $k_{\ell,p}$ approaches $k_{\ell,p}^{ND}$ from below as $\alpha_2\to\infty$; this forces $d_2\le0$, and $d_2=0$ is impossible because $u_{\ell,p}^{ND}(R_2)=0$ together with the limiting Robin condition would give $\partial_r u_{\ell,p}^{ND}(R_2)=0$, hence $u_{\ell,p}^{ND}\equiv0$ by uniqueness. To evaluate the coefficient, parametrise the outer condition as $\mu_2\partial_ru+u=0$; analyticity gives $u_{\mu_2}(R_2)=-\partial_r u_{\ell,p}^{ND}(R_2)+O(\mu_2)$ along the branch, and integrating the Hellmann--Feynman derivative $\partial\lambda/\partial\alpha_2=R_2^{d-1}|u_{\alpha_2}(R_2)|^2$ (with the eigenfunction normalised in $L^2(r^{d-1}dr)$) from $\alpha_2$ to $+\infty$ yields
\[
\lambda_{\ell,p}(\alpha_1,\alpha_2)-\lambda_{\ell,p}^{ND}
=
-\frac{R_2^{d-1}\,|\partial_r u_{\ell,p}^{ND}(R_2)|^2}{\alpha_2}
+O(\alpha_2^{-2}),
\]
whence $d_2=-R_2^{d-1}\,|\partial_r u_{\ell,p}^{ND}(R_2)|^2/(2k_{\ell,p}^{ND})<0$.

(iii) With $\mu_1=1/\alpha_1$,
\[
\mu_1\Delta_\ell(k;\mu_1^{-1},\alpha_2)
=\mathrm A_\ell(k)\alpha_2+\mathrm B_\ell(k)
+\mathrm C_\ell(k)\alpha_2\mu_1+\mathrm D_\ell(k)\mu_1,
\]
whence, at $(k_{\ell,p}^{DN},0,0)$,
\[
k_{\mu_1}=-\frac{\mathrm D_\ell}{\mathrm B_\ell'},\qquad
k_{\alpha_2}=-\frac{\mathrm A_\ell}{\mathrm B_\ell'},
\]
and the two arguments of (ii) give $e_1<0$ and $e_2>0$.

(iv) With $\mu_i=1/\alpha_i$,
\[
N(k,\mu_1,\mu_2):=\mu_1\mu_2\Delta_\ell(k;\mu_1^{-1},\mu_2^{-1})
=\mathrm A_\ell(k)+\mathrm C_\ell(k)\mu_1+\mathrm B_\ell(k)\mu_2
+\mathrm D_\ell(k)\mu_1\mu_2.
\]
Solving $N=0$ successively at $(k_{\ell,p}^{DD},0,0)$ gives
\[
k_{\mu_1}=-\frac{\mathrm C_\ell}{\mathrm A_\ell'},\qquad
k_{\mu_2}=-\frac{\mathrm B_\ell}{\mathrm A_\ell'},
\]
and differentiating
$N_k\,k_{\mu_1\mu_2}+N_{k\mu_1}k_{\mu_2}+N_{k\mu_2}k_{\mu_1}+N_{\mu_1\mu_2}+N_{kk}k_{\mu_1}k_{\mu_2}=0$
once more at the corner gives \eqref{eq:mixed-coeff}. The Dirichlet-side argument of (ii) gives $f_i=k_{\mu_i}=-R_i^{d-1}|\partial_r u_{\ell,p}^{DD}(R_i)|^2/(2k_{\ell,p}^{DD})<0$; the negative sign reflects monotonicity: each branch approaches its Dirichlet limit from below. The numerical values of $g_{\ell,p}$ are obtained by evaluating \eqref{eq:mixed-coeff} directly; in particular the mixed term does \emph{not} vanish in general.
\end{proof}

\begin{theorem}[Closed form of the radial mixed coefficient]\label{thm:g0}
Let $d=3$, $\ell=0$, $p=1$ and $\delta=R_2-R_1$. The mixed second-order
coefficient \eqref{eq:mixed-coeff} at the Dirichlet--Dirichlet corner is
\[
g_{0,1}=\frac{2\pi}{\delta^{3}} .
\]
\end{theorem}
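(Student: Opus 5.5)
The plan is to bypass the general formula \eqref{eq:mixed-coeff} and compute the root directly from a simpler equation that has the same zero set. The coefficient $g_{0,1}$ is the $\mu_1\mu_2$ Taylor coefficient of the branch $k_{0,1}(\mu_1,\mu_2)$ near the Dirichlet--Dirichlet corner. That branch is intrinsic: multiplying the defining function $N$ by any analytic factor that does not vanish near $(k_{0,1}^{DD},0,0)$ leaves the zero set, and hence every Taylor coefficient, unchanged. So any convenient characteristic function with this zero set will do, and by the implicit function theorem argument in the proof of Theorem~\ref{thm:corners} its expansion agrees with \eqref{eq:mixed-coeff}.

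\textbf{Reduction to a constant-coefficient ODE.} For $d=3$, $\ell=0$, I substitute $u=w/r$ in \eqref{eq:SL}, which gives $w''+k^2w=0$. The Robin conditions become
\[
-w'(R_1)+\beta_1w(R_1)=0,\qquad w'(R_2)+\beta_2w(R_2)=0,\qquad \beta_1=\alpha_1+\tfrac1{R_1},\quad \beta_2=\alpha_2-\tfrac1{R_2}.
\]
With $x=r-R_1$, the solution normalised at $R_1$ is $w=\cos kx+\beta_1 k^{-1}\sin kx$, exactly as for $\Phi$ in Theorem~\ref{thm:basic}. Imposing the outer condition and multiplying by $\mu_1\mu_2$, with $a=\beta_1\mu_1=1+\mu_1/R_1$ and $b=\beta_2\mu_2=1-\mu_2/R_2$, gives
\[
-k\mu_1\mu_2\sin k\delta+(a\mu_2+b\mu_1)\cos k\delta+ab\,k^{-1}\sin k\delta=0 .
\]
At $\mu_1=\mu_2=0$ this reduces to $\sin k\delta=0$, so the fundamental Dirichlet root is $k_0=\pi/\delta$.

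\textbf{Local expansion.} I write $k=k_0+\kappa$ and use $\sin k\delta=-\sin\kappa\delta$, $\cos k\delta=-\cos\kappa\delta$. The equation becomes
\[
\tan(\kappa\delta)\bigl(ab/k-k\mu_1\mu_2\bigr)=-(a\mu_2+b\mu_1).
\]
Since $\kappa=O(\mu)$, the tangent correction and the $k\mu_1\mu_2$ term contribute only at cubic order. To second order, therefore,
\[
\kappa\delta=-(k_0+\kappa)\,\frac{a\mu_2+b\mu_1}{ab}.
\]
At first order this gives $\kappa\approx-(k_0/\delta)(\mu_1+\mu_2)$, so $f_1=f_2=-\pi/\delta^2$. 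This can be cross-checked against $f_i$ in Theorem~\ref{thm:corners}(iv), which serves as a sanity test of the normalisation.

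\textbf{Extracting the mixed term.} Here some care is needed. In $(a\mu_2+b\mu_1)/ab$ there are three sources of a $\mu_1\mu_2$ term:
\begin{itemize}
\item the numerator contains $\mu_1\mu_2(1/R_1-1/R_2)$;
\item $1/(ab)=1-\mu_1/R_1+\mu_2/R_2+\dots$, multiplied against $\mu_1+\mu_2$, contributes $\mu_1\mu_2(1/R_2-1/R_1)$;
\item the factor $(k_0+\kappa)$, with $\kappa$ inserted at first order, contributes $+(k_0/\delta)\cdot2\mu_1\mu_2$.
\end{itemize}
The first two cancel exactly, so the curvature terms $1/R_i$ drop out. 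Only the third survives, giving $\kappa\delta\ni 2k_0\mu_1\mu_2/\delta$ and hence
\[
g_{0,1}=\frac{2k_0}{\delta^2}=\frac{2\pi}{\delta^3}.
\]
The pure $\mu_i^2$ terms are absorbed into the $O(\mu_1^2+\mu_2^2)$ remainder.

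This bookkeeping, and in particular verifying the cancellation, is the step I expect to be the main obstacle. As a consistency check I would evaluate \eqref{eq:mixed-coeff} with the bilinear coefficients of Lemma~\ref{lem:bilinear} for $R_1=1$, $R_2=2$, and confirm the value $2\pi\approx6.28$ quoted in Theorem~\ref{thm:corners}.
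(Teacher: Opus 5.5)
Your argument is correct, but it reaches the result by a different route from the paper.

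\textbf{The paper's route.} It stays inside the bilinear framework. It writes $\mathrm A_0,\mathrm B_0,\mathrm C_0,\mathrm D_0$ in elementary form and evaluates them and their $k$-derivatives at $k=\pi/\delta$. It then simplifies the rational expression obtained by inserting these values into \eqref{eq:mixed-coeff}.

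\textbf{Your route.} You bypass \eqref{eq:mixed-coeff} and expand the root of a much simpler characteristic function with the same zero set. You should state the equivalence explicitly rather than only gesturing at $\Phi$. In the spherical normalisation one has, identically in $(k,\mu_1,\mu_2)$,
$\mu_1\mu_2\Delta_0(k;\mu_1^{-1},\mu_2^{-1})=M(k,\mu_1,\mu_2)/(kR_1R_2)$, where $M$ is your left-hand side. The factor is the Wronskian $1/(kR_1^2)$ of $j_0(kr),y_0(kr)$ at $R_1$, times $R_1/R_2$ coming from $u=w/r$. At the corner both sides reduce to $\sin(k\delta)/(k^2R_1R_2)$. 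Moreover $\partial_kM=-\delta/k_0\neq0$ there. Hence both functions define the same analytic branch, and your $\mu_1\mu_2$ coefficient is exactly the $k_{\mu_1\mu_2}$ that \eqref{eq:mixed-coeff} expresses.

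\textbf{Your bookkeeping.} It checks out. One has $(a\mu_2+b\mu_1)/(ab)=\mu_1+\mu_2-\mu_1^2/R_1+\mu_2^2/R_2+O(\mu^3)$, so the only cross term comes from $-\kappa(\mu_1+\mu_2)$. The first-order values $f_1=f_2=-\pi/\delta^2$ also agree with the boundary-derivative formula of Theorem~\ref{thm:corners}(iv).

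\textbf{What each route buys.} Yours gives an explanation rather than a verification. Since $1/\beta_1=\mu_1/(1+\mu_1/R_1)$ depends on $\mu_1$ alone and $1/\beta_2=\mu_2/(1-\mu_2/R_2)$ on $\mu_2$ alone, the curvature shifts only reparametrise each impedance separately. They therefore feed the pure quadratic terms and never the cross term. So $g_{0,1}$ equals the value $2k_0/\delta^2$ for the one-dimensional interval of length $\delta$, which is why it is independent of the individual radii. Your expansion also yields the pure quadratic coefficients $\frac{\pi}{\delta^2}\bigl(\frac1\delta+\frac1{R_1}\bigr)$ and $\frac{\pi}{\delta^2}\bigl(\frac1\delta-\frac1{R_2}\bigr)$ for free; the paper does not compute these. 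The paper's route, by contrast, exercises the general formula \eqref{eq:mixed-coeff} in a concrete case and so doubles as a consistency check on that formula. Indeed its listed values of $\mathrm A_0',\mathrm A_0'',\mathrm B_0,\mathrm C_0,\mathrm D_0,\mathrm B_0',\mathrm C_0'$ are exactly the Taylor data of your $M/(kR_1R_2)$.
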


\begin{proof}
For $\ell=0$ the centrifugal terms in Lemma~\ref{lem:bilinear} vanish
($L_1=L_2=0$), and with $j_0(z)=\sin z/z$, $y_0(z)=-\cos z/z$,
$j_1(z)=\frac{\sin z}{z^2}-\frac{\cos z}{z}$,
$y_1(z)=-\frac{\cos z}{z^2}-\frac{\sin z}{z}$ the four coefficients of
Lemma~\ref{lem:bilinear} reduce to elementary functions; in particular
\[
\mathrm A_0(k)=\frac{\sin(k\delta)}{k^2R_1R_2},\qquad\text{so}\qquad
k_{0,1}^{DD}=\frac{\pi}{\delta}.
\]
At $k_{0,1}^{DD}$ we have $k_{0,1}^{DD}R_2=k_{0,1}^{DD}R_1+\pi$, so all
trigonometric functions in Lemma~\ref{lem:bilinear} and their
$k$-derivatives reduce to rational expressions in $R_1,R_2$. A direct
computation gives, with primes denoting $\partial_k$ and all quantities
evaluated at $k=\pi/\delta$,
\[
\mathrm A_0'=\frac{(R_1-R_2)^3}{\pi^{2}R_1R_2},\qquad
\mathrm A_0''=\frac{4(R_1-R_2)^{4}}{\pi^{3}R_1R_2},\qquad
\mathrm B_0=\mathrm C_0=\frac{R_1-R_2}{\pi R_1R_2},
\]
\[
\mathrm D_0=\frac{-R_1^{2}+2R_1R_2-R_2^{2}}{\pi R_1^{2}R_2^{2}},\qquad
\mathrm B_0'=-\frac{(R_1-2R_2)(R_1-R_2)^{2}}{\pi^{2}R_1R_2^{2}},\qquad
\mathrm C_0'=\frac{(R_1-R_2)^{2}(2R_1-R_2)}{\pi^{2}R_1^{2}R_2}.
\]
Substituting these values into \eqref{eq:mixed-coeff} and simplifying
the resulting rational expression yields $g_{0,1}=2\pi/(R_2-R_1)^3$,
independently of the individual radii.
\end{proof}

\subsection{Low-frequency transparent interval}\label{sec:lowfreq}
As $k\to0^+$, the $Y_\mu$ singularities dominate the sign of $\Delta_\ell$. The next theorem records the exact low-frequency structure and proves the existence of the transparent band.

\begin{theorem}[Low-frequency structure and transparent interval]\label{thm:lowfreq}
Let $d\ge2$, $\nu=d/2-1$, $\ell\ge0$, and $\alpha_1,\alpha_2>0$.
\begin{enumerate}
\item[\rm(i)] \emph{Small-$k$ expansion.} For $d=3$ and $\ell=0$,
\begin{equation}\label{eq:Delta-small-k}
\Delta_0(k;\alpha_1,\alpha_2)
=
\frac{C_{-1}}{k}+C_1 k+O(k^3),\qquad k\to0^+,
\end{equation}
where, with $\delta=R_2-R_1$ and $\rho=R_1R_2$,
\begin{equation}\label{eq:Cminus1}
C_{-1}=\frac{\alpha_1}{R_2^2}+\frac{\alpha_2}{R_1^2}
+\frac{\alpha_1\alpha_2\,\delta}{\rho}>0,
\qquad
C_1=C_1^0+O(\alpha_1+\alpha_2),\quad C_1^0=-\frac{R_2^3-R_1^3}{3R_1^2R_2^2}<0.
\end{equation}
The expansion \eqref{eq:Delta-small-k} is written in the spherical-Bessel
normalisation of \eqref{eq:B1-3D}--\eqref{eq:B2-3D}; in the unified
normalisation of \eqref{eq:trace1}--\eqref{eq:trace2}, $\Delta_0$ remains
\emph{bounded} with positive limit $2C_{-1}/\pi+O(k^2)$, and all spectral
statements below are unaffected since they involve only the zeros of
$\Delta_\ell$. For $\ell\ge1$ the same computation shows
$\Delta_\ell>0$ for all sufficiently small $k>0$ (each
$\mathcal B_i[y_\ell]$ carries a dominant term of order $k^{-\ell-1}$
while $\mathcal B_i[j_\ell]=O(k^\ell)$, the leading combination having
positive coefficient), and for general $d\ge3$ the same structure holds
with coefficients from the small-argument expansions of $J_\nu$ and
$Y_\nu$. For $d=2$ and $\ell=0$ the determinant remains \emph{bounded}
as $k\to0^+$ and
\begin{equation}\label{eq:Delta0-2D}
\Delta_0(k)\longrightarrow
\frac{2}{\pi}\Bigl(\frac{\alpha_1}{R_2}+\frac{\alpha_2}{R_1}
+\alpha_1\alpha_2\ln\frac{R_2}{R_1}\Bigr)>0 ;
\end{equation}
the potentially divergent logarithmic terms cancel in the determinant
combination and survive only through $\ln(R_2/R_1)$. In particular the
dominant balance behind part (iii) is unchanged, and \eqref{eq:kc} holds
for $d=2$ with the remainder $O(\alpha|\ln\alpha|)$ instead of
$O(\alpha)$.
\item[\rm(ii)] \emph{Transparent interval.} For every $\ell\ge0$ there
exists a first positive root $k_{c,\ell}$ of \eqref{eq:char}, and
$\Delta_\ell(k)>0$ for $0<k<k_{c,\ell}$. Moreover the map
$\ell\mapsto k_{c,\ell}$ is strictly increasing, so
\begin{equation}\label{eq:kc-min}
k_c:=\min_{\ell\ge0}k_{c,\ell}=k_{c,0}>0,
\end{equation}
and the band $(0,k_c)$ contains no resonant modes of any angular order.
\item[\rm(iii)] \emph{Small-impedance expansion of the threshold.} Let
$\beta_i=\alpha_iR_i\to0^+$. Then the first-order term is exact:
\begin{equation}\label{eq:kc}
k_c^2
=
\frac{d\big(\alpha_1R_1^{d-1}+\alpha_2R_2^{d-1}\big)}
{R_2^d-R_1^d}
\,\bigl(1+O(\beta_1+\beta_2)\bigr).
\end{equation}
For $d=3$ the complete second-order expansion follows from
Rayleigh--Schr\"odinger perturbation of the zero mode: writing
$D=R_2^3-R_1^3$, $S=R_1^2+R_1R_2+R_2^2$ and
\begin{equation}\label{eq:kc-2nd}
k_c^2
=
\mu_1\alpha_1+\mu_2\alpha_2
+\nu_1\alpha_1^2+\nu_2\alpha_2^2+\nu_{12}\alpha_1\alpha_2
+O(\alpha^3),
\end{equation}
the coefficients are
\[
\mu_1=\frac{3R_1^2}{D},\qquad
\mu_2=\frac{3R_2^2}{D},\qquad \nu_1=-\frac{3R_1^3\bigl(R_1^3+3R_1^2R_2+6R_1R_2^2+5R_2^3\bigr)}
{5S^3},\qquad\quad
\]
\[
\nu_2=-\frac{3R_2^3\bigl(5R_1^3+6R_1^2R_2+3R_1R_2^2+R_2^3\bigr)}
{5S^3},\qquad
\nu_{12}=\frac{9R_1^2R_2^2\bigl(R_1^2+3R_1R_2+R_2^2\bigr)}{5S^3}.
\]
In particular $\nu_1,\nu_2<0$ while $\nu_{12}>0$; the negative
quadratic terms are consistent with the Rayleigh upper bound
$k_c^2\le d\bigl(\alpha_1R_1^{d-1}+\alpha_2R_2^{d-1}\bigr)/(R_2^d-R_1^d)$,
and as $R_1\to0$ \eqref{eq:kc-2nd} reduces to the classical ball
asymptotics $k_c^2=\frac{3\alpha_2}{R_2}\bigl(1-\frac{\alpha_2R_2}{5}+O(\alpha_2^2)\bigr)$.
For the shell $R_1=1$, $R_2=2$ and the symmetric ray
$\alpha_1=\alpha_2=\eta$ this reads
$k_c^2=\frac{15}{7}\eta-0.4111\,\eta^2+O(\eta^3)$.
\end{enumerate}
\end{theorem}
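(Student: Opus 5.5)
The plan is to treat the three parts in order, using the Sturm--Liouville reformulation \eqref{eq:SL} and the identity \eqref{eq:Delta-F} from Theorem~\ref{thm:basic}.

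\emph{Part (i).} For $d=3$, $\ell=0$, insert $j_0=\sin z/z$, $y_0=-\cos z/z$, $j_1$, $y_1$ into \eqref{eq:B1-3D}--\eqref{eq:B2-3D}. Then Taylor-expand in $k$. Only the $y$-traces are singular: $\mathcal B_1[y_0]\sim-(\alpha_1 R_1+1)/(kR_1^2)\cdot R_1$-type terms at order $k^{-1}$ and $k^{-2}$, while $\mathcal B_i[j_0]=\alpha_i+O(k^2)$ up to the $kj_1$ corrections. The $k^{-2}$ contributions cancel in the antisymmetric combination \eqref{eq:Delta-3D}. Collecting the $k^{-1}$ coefficient should give $C_{-1}$, whose bilinear term has coefficient $(R_2-R_1)/R_1R_2$. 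Setting $\alpha=0$ in the $k^{1}$ coefficient gives $C_1^0$. Parity in $k$ holds because $j_0$ is even and $y_0$ is odd, so only odd powers appear. For $\ell\ge1$, use the leading power behaviour $j_\ell\sim z^\ell/(2\ell+1)!!$ and $y_\ell\sim-(2\ell-1)!!z^{-\ell-1}$. The leading term of the determinant is then a positive combination; I will check its sign from the factors $(\alpha_1+(\ell+1)/R_1)$ and $(\alpha_2+\ell/R_2)$ that arise. For $d=2$, the $Y_0$ logarithms appear identically in both products and cancel to leave $\ln(R_2/R_1)$. A cleaner alternative for the sign is to note that \eqref{eq:Delta-F} gives $\Delta_\ell\propto F(k)$, and at $k=0$ the function $F(0)$ can be computed from the explicit static solutions $1, r^{-1}$ (resp.\ $r^\ell, r^{-\ell-1}$). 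That is how I would certify positivity.

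\emph{Part (ii).} Existence of $k_{c,\ell}$ is the first eigenvalue from Theorem~\ref{thm:basic}. Since $F$ (and so $\Delta_\ell$) has no zeros on $(0,k_{c,\ell})$ and is positive near $0^+$ by part (i), positivity persists on the whole interval. For monotonicity in $\ell$, the Rayleigh quotient of \eqref{eq:SL} contains $\ell(\ell+d-2)r^{-2}|u|^2$, which is strictly increasing in $\ell$. Taking the ground state of mode $\ell+1$ as a trial function for mode $\ell$ then gives $\lambda_{\ell,1}<\lambda_{\ell+1,1}$ strictly. Hence $k_c=k_{c,0}$.

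\emph{Part (iii).} This is the main computational step. I will use Rayleigh--Schr\"odinger perturbation of the Neumann zero mode $u_0=\mathrm{const}$. With $V=|\Omega|$-radial weight $\int_{R_1}^{R_2}r^{d-1}dr=(R_2^d-R_1^d)/d$, the first-order term $\lambda^{(1)}=(\alpha_1R_1^{d-1}+\alpha_2R_2^{d-1})/V$ gives \eqref{eq:kc}. The Rayleigh upper bound follows by inserting the constant trial function. For the second order when $d=3$, solve the first-order corrector $w$. It satisfies $w''+\tfrac2r w'=-\lambda^{(1)}$ with $-w'(R_1)=-\alpha_1$ and $w'(R_2)=-\alpha_2$, normalised orthogonal to constants. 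Its explicit form is $w=-\lambda^{(1)}r^2/6+a/r$, with $a$ fixed by the boundary data. Then use $\lambda^{(2)}=(\alpha_1R_1^2w(R_1)+\alpha_2R_2^2w(R_2))/V$, equivalently $-\int|w'|^2r^2$ minus boundary terms. Expanding this as a quadratic form in $(\alpha_1,\alpha_2)$ and simplifying with $D=(R_2-R_1)S$ should produce $\nu_1,\nu_2,\nu_{12}$. The obstacle is purely algebraic bookkeeping: I will verify the result against the limit $R_1\to0$, which should recover $-3R_2/5\cdot3\alpha_2^2/R_2\cdot\ldots$, i.e.\ $-\tfrac35\alpha_2^2$ relative to $3\alpha_2/R_2$. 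I will also check the numerical $R_1=1$, $R_2=2$ value $-0.4111$. The $d=2$ remark follows because the corrector then contains $\ln r$, producing the $\alpha|\ln\alpha|$ remainder.
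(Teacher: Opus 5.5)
Your proposal follows the paper's proof in all three parts---small-$k$ Taylor expansion of the spherical traces for (i), positivity near $0^+$ together with absence of roots below the first eigenvalue and a comparison argument in $\ell$ for (ii), and Rayleigh--Schr\"odinger perturbation of the constant Neumann mode for (iii), where your intermediate-normalised formula $\lambda^{(2)}=-V^{-1}\int_{R_1}^{R_2}|w'|^2r^2\,dr$ reproduces the paper's $\nu_1,\nu_2,\nu_{12}$ exactly and makes $\nu_1,\nu_2<0$ immediate. Two small corrections: first, the $\ln r$ in the two-dimensional corrector is bounded on $[R_1,R_2]$ and cannot generate an $\alpha|\ln\alpha|$ term; the simple zero-mode eigenvalue $k_c^2$ is analytic in $(\alpha_1,\alpha_2)$ near $(0,0)$, so your first-order argument already gives an $O(\alpha)$ relative remainder for $d=2$, and this implies the stated bound a fortiori. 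Second, for $\ell\ge1$ the leading small-$k$ coefficient also contains the sign-indefinite product $(\alpha_1-\ell/R_1)\bigl((\ell+1)/R_2-\alpha_2\bigr)$, so the cleanest certificate of $F(0)>0$ is the identity $(r^{d-1}\Phi')'=\ell(\ell+d-2)r^{d-3}\Phi$ with $\Phi(R_1)=1$, $\Phi'(R_1)=\alpha_1>0$, which keeps $\Phi$ and $\Phi'$ positive on $[R_1,R_2]$.
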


\begin{proof}[Proof of Theorem~\ref{thm:lowfreq} {\rm(i)} and {\rm(ii)}]
(i) For $d=3$, $\ell=0$, use
$j_0(z)=\frac{\sin z}{z}$, $y_0(z)=-\frac{\cos z}{z}$,
$j_1(z)=\frac{\sin z}{z^2}-\frac{\cos z}{z}$,
$y_1(z)=-\frac{\cos z}{z^2}-\frac{\sin z}{z}$ together with
$\cos z=1-\frac{z^2}{2}+O(z^4)$, $\sin z=z-\frac{z^3}{6}+O(z^5)$.
Substitution into \eqref{eq:B1-3D}--\eqref{eq:B2-3D} with $\ell=0$ gives
the four trace expansions
\[
\begin{aligned}
&\mathcal B_1[j_0]
=\alpha_1+k^2\Big(\frac{R_1}{3}-\frac{\alpha_1R_1^2}{6}\Big)+O(k^4),
\quad
&\mathcal B_2[j_0]&
=\alpha_2-k^2\Big(\frac{R_2}{3}+\frac{\alpha_2R_2^2}{6}\Big)+O(k^4),\\
&\mathcal B_1[y_0]
=-\frac{1+\alpha_1R_1}{kR_1^2}
+k\,\frac{\alpha_1R_1-1}{2}+O(k^3),
\quad
&\mathcal B_2[y_0]&
=\frac{1-\alpha_2R_2}{kR_2^2}
+k\,\frac{1+\alpha_2R_2}{2}+O(k^3).
\end{aligned}
\]
Multiplying according to
$\Delta_0=\mathcal B_1[j_0]\mathcal B_2[y_0]
-\mathcal B_2[j_0]\mathcal B_1[y_0]$, the $O(1)$ terms cancel
identically and one obtains
\[
\Delta_0(k)
=
\frac{1}{k}\Big[
\frac{\alpha_1(1-\alpha_2R_2)}{R_2^2}
+\frac{\alpha_2(1+\alpha_1R_1)}{R_1^2}
\Big]
-\frac{R_2^3-R_1^3}{3R_1^2R_2^2}\,k+O(\alpha)\,k+O(k^3),
\]
which is \eqref{eq:Delta-small-k}--\eqref{eq:Cminus1}, since
$$\frac{1-\alpha_2R_2} {R_2^2}\alpha_1+\frac{1+\alpha_1R_1}{R_1^2}\alpha_2
=\frac{\alpha_1}{R_2^2}+\frac{\alpha_2}{R_1^2}
+\alpha_1\alpha_2\big(\frac1{R_1}-\frac1{R_2}\big)$$ with
$\frac1{R_1}-\frac1{R_2}=\frac{\delta}{\rho}$. The same
computation with general $\nu$ uses
$J_\nu(z)=(z/2)^\nu/\Gamma(\nu+1)(1+O(z^2))$ and
$Y_\nu(z)=-\Gamma(\nu)(2/z)^\nu(1+O(z^2))/\pi$. For $d=2$,
\eqref{eq:Delta0-2D} follows by substituting
$J_0(z)=1+O(z^2)$, $J_1(z)=z/2+O(z^3)$,
$Y_0(z)=\frac{2}{\pi}(\ln\frac{z}{2}+\gamma)+O(z^2\ln z)$ and
$Y_1(z)=-\frac{2}{\pi z}+\frac{z}{\pi}(\ln\frac{z}{2}+\gamma-\tfrac12)+O(z^3\ln z)$
into \eqref{eq:B1-2D}--\eqref{eq:B2-2D} with $\ell=0$: the $1/z$
contributions of $kY_1(kR_i)$ cancel between the two products in
\eqref{eq:Delta-n-2D}, and the remaining logarithms combine into
\eqref{eq:Delta0-2D}. Since the first branch of
\eqref{eq:SL} satisfies $\lambda_1=2(\alpha_1R_1+\alpha_2R_2)/
(R_2^2-R_1^2)\,(1+o(1))$ as $\alpha_i\to0$ (Rayleigh quotient with
$u\equiv1$), \eqref{eq:kc} for $d=2$ follows with the stated logarithmic
remainder.

(ii) By part (i), $\Delta_\ell(k)>0$ for all sufficiently small $k>0$
(in the spherical normalisation of $d=3$ one even has
$\Delta_\ell(k)\to+\infty$ as $k\to0^+$, and for $d=2$, $\ell=0$,
\eqref{eq:Delta0-2D} gives a bounded positive limit), while the
large-$k$ asymptotics \eqref{eq:Delta-large-k} below show that
$\Delta_\ell$ changes sign arbitrarily often; hence a smallest
positive root $k_{c,\ell}$ exists. For the monotonicity in $\ell$,
the Sturm comparison principle applied to \eqref{eq:SL} shows that
raising the angular index increases the effective potential
$\ell(\ell+d-2)r^{-2}$ and hence every eigenvalue, so
$k_{c,0}<k_{c,1}<\cdots$ and \eqref{eq:kc-min} follows.
\end{proof}

\begin{proof}[Proof of {\rm(iii)}]
The first-order term in \eqref{eq:kc} is the Hellmann--Feynman derivative
of the zero eigenvalue at the Neumann--Neumann corner: with
$u\equiv1$,
$\partial\lambda/\partial\alpha_i\big|_0
=R_i^{d-1}|u(R_i)|^2/\|u\|_{L^2(r^{d-1}dr)}^2
=dR_i^{d-1}/(R_2^d-R_1^d)$. For $d=3$ we carry out the second-order
Rayleigh--Schr\"odinger perturbation explicitly. Write
$u=1+\alpha_1v_1+\alpha_2v_2+O(\alpha^2)$ with the gauge $u(R_1)=1$.
The $O(\alpha_i)$ eigenfunction correction solves
$v_i''+\frac{2}{r}v_i'=-\mu_i$ with
$\mu_i=\partial\lambda/\partial\alpha_i\big|_0=3R_i^2/D$, and the
boundary conditions read off from \eqref{eq:SL}:
$v_1'(R_1)=1$, $v_1'(R_2)=0$ and $v_2'(R_1)=0$, $v_2'(R_2)=-1$. Hence
\[
v_1'=-\frac{\mu_1 r}{3}+\frac{c_1}{r^2},\quad c_1=\frac{\mu_1R_2^3}{3},
\qquad
v_2'=-\frac{\mu_2 r}{3}+\frac{c_2}{r^2},\quad c_2=\frac{\mu_2R_1^3}{3},
\]
with $v_i(R_1)=0$. Because the Rayleigh quotient is stationary at the
eigenfunction, the second-order coefficients of $\lambda[u]=N[u]/D[u]$
are obtained by inserting $u$ into
\[
N[u]=\int_{R_1}^{R_2}|u'|^2r^2\,dr+\alpha_1R_1^2u(R_1)^2+\alpha_2R_2^2u(R_2)^2,
\qquad
D[u]=\int_{R_1}^{R_2}|u|^2r^2\,dr:
\]
the coefficient of $\alpha_1^2$ is
$\nu_1=\bigl(\int v_1'^2r^2\,dr\bigr)/D_0-2R_1^2\langle v_1\rangle/D_0$,
that of $\alpha_2^2$ is
$$\nu_2=\Bigl(\int_{R_1}^{R_2} v_2'^2r^2\,dr+2R_2^2v_2(R_2)\Bigr)/D_0-2R_2^2\langle v_2\rangle/D_0,$$
and the mixed coefficient is
$$\nu_{12}=\Bigl(2\int_{R_1}^{R_2}  v_1'v_2'r^2\,dr+2R_2^2v_1(R_2)\Bigr)/D_0
-2\Bigl(R_1^2\langle v_2\rangle+R_2^2\langle v_1\rangle\Bigr)/D_0,$$
where $D_0=D/3$ and $\langle v_i\rangle=\int v_i r^2\,dr/D_0$.
Evaluating the elementary integrals gives the stated formulas; the
remainder $O(\alpha^3)$ follows from the real-analyticity in
Theorem~\ref{thm:basic}. The ball limit $R_1\to0$ is obtained by
direct substitution, and the symmetric-ray value follows from
$\nu_1+\nu_2+\nu_{12}=-0.4111\ldots$ at $R_1=1$, $R_2=2$.
\end{proof}

\begin{remark}[Structure of the small-impedance expansion]\label{rem:kc-accuracy}
Formula \eqref{eq:kc-2nd} is the exact two-term Taylor expansion of
$k_c^2$ at the Neumann--Neumann corner: the threshold approaches its
leading term from below, with impedance-dependent quadratic
coefficients (relative correction $-0.1919\,\eta$ on the symmetric ray
of the shell $R_1=1$, $R_2=2$) that no single-factor correction in
$(\alpha_1,\alpha_2)$ can capture. For a uniformly accurate threshold
over the whole impedance range the rational approximation
\eqref{eq:kc-rational} should be used instead.
\end{remark}

\begin{remark}[Global rational approximation]\label{rem:global-rational}
The perturbative formula \eqref{eq:kc} is valid for small $\beta_i$
only. A uniformly accurate threshold over the whole impedance range is
obtained by two-point rational interpolation between the first-order
term of \eqref{eq:kc} at $(0,0)$ and the Dirichlet--Dirichlet limit
$k_c^{DD}$ (the first positive root of $\Delta_0$ with
$\alpha_1=\alpha_2=+\infty$). Writing $q=R_1/R_2$, we use the form
\begin{equation}\label{eq:kc-rational}
(k_cR_2)^2
\approx
\frac{d\big[\beta_1 q^{\,d-2}+\beta_2\big]
+a_1\beta_1^2+a_2\beta_2^2+a_{12}\beta_1\beta_2}
{\bigl(1-q^{d}\bigr)\bigl[1+s(\beta_1+\beta_2)+t(\beta_1+\beta_2)^2\bigr]} .
\end{equation}
By construction the linear part of \eqref{eq:kc-rational} reproduces
\eqref{eq:kc} identically. Expanding at $(0,0)$ and matching the
quadratic Taylor coefficients of \eqref{eq:kc-2nd} (stated for $d=3$; the same construction applies in any dimension once the corresponding second-order coefficients are used) fixes three of the
five coefficients in terms of $s$,
\[
a_1=\bigl(1-q^d\bigr)\frac{R_2^2\nu_1}{R_1^2}+s\,d\,q^{d-2},\qquad
a_2=\bigl(1-q^d\bigr)\nu_2+s\,d,\qquad
a_{12}=\bigl(1-q^d\bigr)\frac{R_2^2\nu_{12}}{R_1R_2}+s\,d\bigl(q^{d-2}+1\bigr),
\]
matching the Dirichlet--Dirichlet limit along the symmetric ray,
$\lim_{\beta\to\infty}\bigl(k_c(\beta,\beta)R_2\bigr)^2=(k_c^{DD}R_2)^2$, fixes
$t=(a_1+a_2+a_{12})/[4(1-q^d)(k_c^{DD}R_2)^2]$, and the remaining parameter $s$ is fixed by a least-squares fit against the exact threshold (for the benchmark shell, fitting $s$ on the symmetric ray gives $s=0.2444$, hence $a_1=-0.0681$, $a_2=0.2802$, $a_{12}=1.5039$ and $t=0.01242$). On the symmetric ray it is simpler, and more accurate, to
determine the three coefficients by a direct fit: for the
three-dimensional shell $R_1=1$, $R_2=2$ along the symmetric impedance
ray $\alpha_1=\alpha_2=\eta$ (so that $\beta_1=\eta$ and $\beta_2=2\eta$
in the notation $\beta_i=\alpha_iR_i$ of
Theorem~\ref{thm:lowfreq}(iii); the horizontal axis of
Figure~\ref{fig:spectrum}(d) is this common impedance $\eta$),
least-squares fitting while matching the leading term $15\eta/7$ of
\eqref{eq:kc} and the Dirichlet--Dirichlet limit $k_c^{DD}=\pi/\delta$
yields
\[
k_c^2\approx
\frac{2.1429\,\eta+0.40424\,\eta^2}
{1+0.38077\,\eta+0.040958\,\eta^2},
\qquad 3\times10^{-3}\leq\eta\leq100,
\]
whose relative error against the exact $k_c$ stays below $0.03\%$
over the entire fitted range, in contrast to the bare leading-order
term $15\eta/7$, whose relative error reaches $375\%$ at $\eta=100$.
Off the symmetric ray the single-corner interpolation \eqref{eq:kc-rational} should be regarded as an ansatz: with the values above, its relative error against the exact $k_c$ stays below $0.04\%$ on the diagonal for $\eta\leq1$ but reaches $2.2\%$ at $(\eta,\eta)=(100,100)$ and degrades sharply near the mixed corners ($\eta_1\ll\eta_2$ or $\eta_1\gg\eta_2$), where it can even take nonphysical negative values (e.g.\ at $(\eta_1,\eta_2)=(40,3\times10^{-3})$; a bivariate fit accurate over the whole impedance square is obtained by releasing $a_1,a_2,a_{12},s,t$ to a full least-squares fit with \eqref{eq:kc-2nd} and the four limiting corners imposed as constraints. All relative errors in this remark are quoted for $k_c$
(those for $k_c^2$ are twice as large).
\end{remark}

\medskip\noindent\emph{Physical interpretation.}
The transparent band $(0,k_c)$ is a resonance-free interval:
below $k_c$ the shell supports no trapped mode in any angular order. In
acoustic terms the two Robin walls act as a low-frequency blocking
filter whose cutoff is, to first order, the Rayleigh quotient of the
constant profile,
$k_c^2\approx d\bigl(\alpha_1R_1^{d-1}+\alpha_2R_2^{d-1}\bigr)/(R_2^d-R_1^d)$:
the cutoff grows like $\sqrt{\alpha_1+\alpha_2}$ along any
fixed ray of the impedance plane and approaches the
Dirichlet--Dirichlet value $\pi/(R_2-R_1)$ from below as
$\alpha_1,\alpha_2\to\infty$.

\subsection{High-frequency spacing with shell-curvature correction}
Let $\delta=R_2-R_1$ and $\rho=R_1R_2$. The following lemma makes the large-$k$ structure of the determinant explicit and proves the intermediate asymptotics \eqref{eq:Delta-large-k}.

\begin{lemma}[Large-$k$ expansion of the determinant]\label{lem:Delta-asym}
Let $\delta=R_2-R_1$, $\rho=R_1R_2$, and $R=\max(R_1,R_2)$. For $d=3$, fixed $\ell\ge0$ and $\alpha_1,\alpha_2>0$,
\begin{equation}\label{eq:Delta-asym-d3}
\Delta_\ell(k;\alpha_1,\alpha_2)
=
\frac{1}{R_1R_2}\Big[
-\sin(k\delta)
+\frac{\alpha_1+\alpha_2+K_{\ell,3}\,\delta/\rho}{k}\cos(k\delta)
+O\big((kR)^{-2}\big)\Big],
\qquad
K_{\ell,3}=1+\frac{\ell(\ell+1)}{2}.
\end{equation}
Consequently, for every $d\ge2$, with $\mu=\ell+\nu$,
\begin{equation}\label{eq:Delta-large-k}
\Delta_\ell(k;\alpha_1,\alpha_2)
=
\frac{C(k;R_1,R_2)}{R_1R_2}
\left[
-\sin(k\delta)
+
\frac{S_{\ell,d}(\alpha_1,\alpha_2)}{k}\cos(k\delta)
+
O\big((kR)^{-2}\big)
\right],
\end{equation}
where $C(k;R_1,R_2)$ equals $1$ in the spherical normalisation of
\eqref{eq:Delta-asym-d3}, while in the trace normalisation of
\eqref{eq:trace1}--\eqref{eq:trace2} for general $d$ each trace carries
the amplitude $R_i^{-\nu}(2k/(\pi R_i))^{1/2}$ (because
$Z_\mu(kR_i)\sim(2/(\pi kR_i))^{1/2}\operatorname{trig}$ as $k\to\infty$),
so that
\[
C(k;R_1,R_2)=\frac{2k}{\pi}\,(R_1R_2)^{\frac12-\nu};
\]
in either case $C$ depends only on $k$ and the geometry and cancels
from every spectral equation below; and
\begin{equation}\label{eq:S}
S_{\ell,d}(\alpha_1,\alpha_2)
=
\alpha_1+\alpha_2+K_{\ell,d}\frac{\delta}{\rho},
\qquad
K_{\ell,d}
=
\frac{(2\ell+d-2)^2+4d-5}{8}.
\end{equation}
The constant $K_{\ell,d}$ collects the centrifugal contributions
$L_i=\ell/R_i$ and the amplitude corrections of the large-argument
Bessel expansions; in particular $K_{0,3}=1$ and
$K_{\ell,3}=1+\ell(\ell+1)/2$.
\end{lemma}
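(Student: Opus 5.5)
The plan is to avoid expanding the four products in \eqref{eq:char} separately and instead encode the determinant through the Hankel function. Since $\alpha_1,\alpha_2,k$ are real, each Robin trace $\mathcal B_i$ in \eqref{eq:trace1}--\eqref{eq:trace2} is a real-linear functional of $Z_\mu$. Hence $\mathcal B_i[H_\mu^{(1)}]=\mathcal B_i[J_\mu]+i\,\mathcal B_i[Y_\mu]$, and a one-line computation gives the identity
\[
\Delta_\ell=\operatorname{Im}\Bigl(\overline{\mathcal B_1[H_\mu^{(1)}]}\;\mathcal B_2[H_\mu^{(1)}]\Bigr).
\]
The whole lemma then reduces to a first-order large-argument expansion of the two complex traces $\mathcal B_i[H_\mu^{(1)}]$.

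For this I will use Hankel's expansion \cite[\S10.17]{Olver2010} with the phase $\theta_i=kR_i-\mu\pi/2-\pi/4$ and the coefficient $a_\mu=(4\mu^2-1)/8$:
\[
H^{(1)}_\mu(z)=\Bigl(\tfrac{2}{\pi z}\Bigr)^{1/2}e^{i\theta}\Bigl(1+\tfrac{i a_\mu}{z}+O(z^{-2})\Bigr),
\qquad
H^{(1)}_{\mu+1}(z)=-i\Bigl(\tfrac{2}{\pi z}\Bigr)^{1/2}e^{i\theta}\Bigl(1+\tfrac{i a_{\mu+1}}{z}+O(z^{-2})\Bigr).
\]
Substituting into \eqref{eq:trace1}--\eqref{eq:trace2} and factoring out the dominant term $\pm ik$ from the $kZ_{\mu+1}$ contribution should give
\[
\mathcal B_1[H]=R_1^{-\nu}\Bigl(\tfrac{2k}{\pi R_1}\Bigr)^{1/2}e^{i\theta_1}(-i)\Bigl[1+\tfrac{i}{k}\bigl(\tfrac{a_{\mu+1}}{R_1}+\alpha_1-\tfrac{\ell}{R_1}\bigr)+O(k^{-2})\Bigr],
\]
\[
\mathcal B_2[H]=R_2^{-\nu}\Bigl(\tfrac{2k}{\pi R_2}\Bigr)^{1/2}e^{i\theta_2}\,i\,\Bigl[1+\tfrac{i}{k}\bigl(\tfrac{a_{\mu+1}}{R_2}-\alpha_2-\tfrac{\ell}{R_2}\bigr)+O(k^{-2})\Bigr].
\]
The signs inside the brackets are what I must verify carefully. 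Multiplying the conjugate of the first by the second, the phases combine to $e^{i(\theta_2-\theta_1)}=e^{ik\delta}$ and the prefactor $(-i)\cdot\overline{\phantom{i}}\cdots i$ becomes real. The real amplitude is exactly $C(k;R_1,R_2)/(R_1R_2)$ with $C=\frac{2k}{\pi}(R_1R_2)^{1/2-\nu}$.

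Taking the imaginary part of $-e^{ik\delta}\bigl(1+\tfrac{i}{k}\sigma+O(k^{-2})\bigr)$, where $\sigma$ is the difference of the two bracket coefficients, yields $-\sin(k\delta)-\frac{\sigma}{k}\cos(k\delta)$. The overall sign will be checked against the Dirichlet limit, where $\mathrm A_0\propto\sin(k\delta)$ in Theorem~\ref{thm:g0}. It then remains to identify $-\sigma=\alpha_1+\alpha_2+(a_{\mu+1}-\ell)(1/R_1-1/R_2)$ and to note that $1/R_1-1/R_2=\delta/\rho$. The constant becomes $K=a_{\mu+1}-\ell=\frac{4(\ell+\nu+1)^2-1}{8}-\ell$. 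With $\nu=d/2-1$ this gives $\frac{(2\ell+d)^2-1-8\ell}{8}=\frac{(2\ell+d-2)^2+4d-5}{8}$, matching \eqref{eq:S}. For $d=3$ it gives $K_{\ell,3}=1+\ell(\ell+1)/2$, and $K_{0,3}=1$.

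For the spherical normalisation \eqref{eq:Delta-asym-d3}, I would instead use $z_\ell=\sqrt{\pi/2z}\,Z_{\ell+1/2}$. The factors $\sqrt{\pi/(2kR_i)}$ exactly cancel the amplitude, leaving $C=1$, and $\mathcal B_i$ in \eqref{eq:B1-3D} has the same form as \eqref{eq:trace1} with $\nu$ removed, so the same computation applies. Here $h^{(1)}_\ell$ is an exact finite sum $e^{iz}z^{-1}\sum_{n\le\ell}c_nz^{-n}$, which makes the remainder $O((kR)^{-2})$ rigorous and uniform. In general $d$, the remainder follows from the standard error bounds for Hankel's expansion at fixed $\mu$.

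The main obstacle is bookkeeping. Four sources of $1/k$ terms must be tracked: the centrifugal shifts $\mp\ell/R_i$, the impedances, the Hankel corrections $a_\mu$ versus $a_{\mu+1}$ (only $a_{\mu+1}$ survives at first order because the $Z_\mu$ term is already $O(1)$ relative to $kZ_{\mu+1}$), and the opposite orientations of the normals at $R_1$ and $R_2$. A single sign slip changes $K_{\ell,d}$. I would therefore cross-check the result against the elementary $\ell=0$, $d=3$ formulas of Theorem~\ref{thm:lowfreq}(i) with $j_0,y_0,j_1,y_1$, which must reproduce $K_{0,3}=1$.
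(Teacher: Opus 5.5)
Your proof is correct, and it takes a different route from the paper.

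\textbf{What the paper does.} It uses the real large-argument expansions of $j_\ell,y_\ell,j_{\ell+1},y_{\ell+1}$ with phase $\theta_\ell(z)=z-\ell\pi/2$. It writes out the four traces $\mathcal B_i[j_\ell],\mathcal B_i[y_\ell]$ to order $k^{-1}$ and multiplies the products. It then has to check, as its ``key structural point'', that the $O(k^{-1})$ coefficients of $\cos\theta_1\cos\theta_2$ and $\sin\theta_1\sin\theta_2$ coincide, so that the $\cos(k(R_1+R_2))$ terms cancel. The general-$d$ formula is only asserted to follow from the ``identical computation'' with $J_\mu,Y_\mu$.

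\textbf{What your route changes.} The identity $\Delta_\ell=\operatorname{Im}\bigl(\overline{\mathcal B_1[H^{(1)}_\mu]}\,\mathcal B_2[H^{(1)}_\mu]\bigr)$ makes that cancellation automatic, because only the phase difference $\theta_2-\theta_1=k\delta$ can occur. It also treats all $d\ge2$ in one stroke. The constant comes out in the transparent form $K_{\ell,d}=a_{\mu+1}-\ell$: the first Hankel coefficient of order $\mu+1$ minus the centrifugal shift.

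\textbf{Checks of your computation.} Your two trace brackets are right. The prefactor is $\overline{(-i)}\cdot i=-1$, and the amplitude is $C=\frac{2k}{\pi}(R_1R_2)^{1/2-\nu}$. The algebra $((2\ell+d)^2-1-8\ell)/8=((2\ell+d-2)^2+4d-5)/8$ is correct. For $d=3$ you get $a_{\ell+3/2}=(\ell+1)(\ell+2)/2$, which is exactly the coefficient the paper uses for $j_{\ell+1},y_{\ell+1}$. For $d=2$ you get $K_{n,2}=(4n^2+3)/8$, consistent with the numerics section.

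\textbf{Trade-offs.} The paper's real-variable computation yields explicit trace expansions in elementary form. Your complex formulation is shorter and less prone to sign errors, and it extends mechanically to higher orders by multiplying the two asymptotic brackets.

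\textbf{On your sign check.} $\mathrm A_0$ is the coefficient of $\alpha_1\alpha_2$. It therefore tests the $k^{-2}$ term $\alpha_1\alpha_2\sin(k\delta)/(k^2R_1R_2)$, which your expansion produces from the cross term $s_1s_2\ni-\alpha_1\alpha_2$. It does not test the leading $-\sin(k\delta)$ directly. It still confirms the overall sign, since both terms flip together under a sign error. In any case, your direct computation of the prefactor already fixes the sign.
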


For the proof see Appendix~\ref{app:Delta-asym}; the key structural
point is that the two first-order coefficients in the product coincide identically, forcing the exact cancellation of all $\cos(k(R_1+R_2))$ terms.

The two-term expansion \eqref{eq:Delta-large-k} determines the leading spacing, but its $O(k^{-2})$ error term is too coarse for a second-order spacing law: a term $c\,k^{-2}\cos(k\delta)$ would induce an alternating $O(p^{-2})$ error in the spacings. The following parity lemma rules out all even inverse powers in the quantization equation, upgrading the error to $O(k^{-3})$.

\begin{lemma}[Odd inverse powers in the quantization equation]\label{lem:odd}
Let $Q\in C^2([R_1,R_2])$ be real-valued and consider
\begin{equation}\label{eq:normal-form}
-w''+Qw=k^2w\quad\text{on }(R_1,R_2),\qquad
w'(R_1)=\gamma_1w(R_1),\quad w'(R_2)=-\gamma_2w(R_2),
\end{equation}
with $k$-independent real data $\gamma_1,\gamma_2$. Then the positive roots $k_p$ satisfy
\begin{equation}\label{eq:odd}
k_p\delta=p\pi+\frac{A}{k_p}+\frac{B_p}{k_p^3},
\qquad \delta=R_2-R_1,
\end{equation}
where $A=\gamma_1+\gamma_2+\frac12\int_{R_1}^{R_2}Q(r)\,dr$ and $\{B_p\}$ is a bounded sequence. In particular the quantization equation contains no $O(k^{-2})$ term.
\end{lemma}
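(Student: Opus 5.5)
We plan to prove Lemma~\ref{lem:odd} with a Liouville--Green (WKB) phase representation of the solution that satisfies the left boundary condition. The key point is that the high-frequency expansion of this phase involves only odd inverse powers of $k$.

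\emph{Step 1: odd phase expansion.} Let $w_1(r;k)$ be the solution with $w_1(R_1)=1$ and $w_1'(R_1)=\gamma_1$, and pass to Pr\"ufer-type variables $w_1=\varrho\sin\theta$, $w_1'=k\varrho\cos\theta$. Then
\[
\theta'=k-\frac{Q}{k}\sin^2\theta,\qquad \cot\theta(R_1)=\gamma_1/k ,
\]
which gives $\theta(R_1)=\pi/2-\arctan(\gamma_1/k)$. This initial value is $\pi/2$ plus an odd function of $1/k$.

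Rather than work with the Pr\"ufer equation directly, I would use the Riccati form. Set $w=\exp\bigl(\int S\bigr)$, so that $S'+S^2=Q-k^2$, and take the formal solution
\[
S=\pm ik+\sum_{n\ge1}S_n(ik)^{-n}.
\]
The standard WKB recursion shows that the real and imaginary parts separate by parity. Consequently the accumulated real phase is
\[
\Theta(r;k)=k(r-R_1)+\sum_{m\ge0}\theta_{2m+1}(r)\,k^{-2m-1},
\]
with $\theta_1(r)=\tfrac12\int_{R_1}^rQ$. The amplitude, by contrast, contains only even powers.

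A real solution therefore takes the form $w_1=a(r;k)\cos(\Theta(r;k)-\phi_1)$, where $\tan\phi_1$ is fixed by the left boundary condition. Combining the leading amplitude derivative with $\gamma_1$ shows that $\phi_1$ is also odd in $1/k$, namely $\phi_1=\gamma_1/k+O(k^{-3})$.

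\emph{Step 2: the quantization condition.} The right boundary condition then yields
\[
k\delta+\theta_1(R_2)/k-\phi_1-\phi_2=p\pi-\pi+\cdots .
\]
Here $\phi_2=-\gamma_2/k+O(k^{-3})$ is obtained symmetrically, and the overall constant $\pi$ shift is absorbed into the indexing of $p$, which is normalised to match the paper's labelling. Every correction term is odd in $1/k$, so solving gives $k_p\delta=p\pi+A/k_p+O(k_p^{-3})$ with $A=\gamma_1+\gamma_2+\tfrac12\int Q$. Defining $B_p:=k_p^3\bigl(k_p\delta-p\pi-A/k_p\bigr)$, boundedness of $B_p$ is exactly the statement that the remainder is $O(k_p^{-3})$.

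\emph{Main obstacle.} The hardest step is making this parity rigorous under only the hypothesis $Q\in C^2$. The formal WKB series must be truncated at order $k^{-3}$, and we need a uniform remainder estimate for the truncated phase and amplitude; this is where the $C^2$ regularity is needed, through $\theta_3$, which involves $Q'$.

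I would attempt this with a Volterra integral equation for the error relative to the two-term WKB approximant. A Gronwall bound then gives an error of $O(k^{-3})$ uniformly on $[R_1,R_2]$. The parity of the first three coefficients can be checked by hand: the $k^{-2}$ term in the phase is absent because the Riccati recursion puts $S_2$ entirely into the amplitude.

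Finally, the implicit-function step converting the phase equation into \eqref{eq:odd} is routine, since the derivative in $k$ is $\delta+O(k^{-2})$, which stays away from zero.
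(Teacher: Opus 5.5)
Your strategy is sound and follows a genuinely different route from the paper's.

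\textbf{Comparison with the paper.} The paper stays with the Pr\"ufer angle throughout. Integrating \eqref{eq:prufer} gives the exact identity $k\delta=p\pi+\arctan(\gamma_1/k)+\arctan(\gamma_2/k)+k^{-1}\int_{R_1}^{R_2}Q\sin^2\theta\,dr$. The integral is then expanded around the free phase, using one iterate of \eqref{eq:prufer} and integration by parts. In that formulation an $O(k^{-2})$ term \emph{is} present: an oscillatory boundary term proportional to $Q(R_2)\sin(2k\delta)/k^2$. It drops to $O(k_p^{-3})$ only after evaluation at a root, where $\sin(2k_p\delta)=O(1/k_p)$.

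In your Riccati formulation the $S_n$ are real, so even orders feed $\log a$ and odd orders feed $\Theta$. The quantization function built from $\Theta(R_2)$ and the two boundary phases therefore has no $k^{-2}$ term identically in $k$. The paper's oscillatory term is simply the discrepancy between the Pr\"ufer angle and the Liouville--Green phase at $R_2$, which you never introduce.

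What you gain is a structural reason for the parity (for smooth $Q$ it gives odd powers to all orders) and no dependence on where the roots lie. What you pay is a genuine remainder estimate for the approximant and the bookkeeping of $a'/a$ in the boundary phases; the paper needs only first-order control of the phase perturbation. Your initial value $\theta(R_1)=\pi/2-\arctan(\gamma_1/k)$ is also more careful than the paper's ansatz $\theta=k(r-R_1)+\psi$ with $\psi=O(1/k)$, which drops this $\pi/2$. That slip is harmless there, because it only affects terms that end up absorbed.

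\textbf{Three points need repair.}

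First, the signs in Step~2 are wrong. Since $S_1=Q/2$, you have $\operatorname{Im}\bigl(S_1/(ik)\bigr)=-Q/(2k)$, so $\theta_1(r)=-\tfrac12\int_{R_1}^rQ$; compare $\sqrt{k^2-q}=k-q/(2k)+\dots$ for constant $Q=q$. Also, your display with $\phi_2=-\gamma_2/k$ yields $\gamma_1-\gamma_2-\tfrac12\int Q$ rather than $A$. Write $w_1=a\cos(\Theta-\phi_1)$, where $ae^{i\Theta}$ is an exact complex solution and $\Theta(R_1)=0$. The Robin conditions then give exactly
\[
\Theta(R_2)=N\pi+\arctan\frac{\gamma_1-a'/a}{\Theta'}\Big|_{R_1}+\arctan\frac{\gamma_2+a'/a}{\Theta'}\Big|_{R_2}.
\]
Inserting $\Theta(R_2)=k\delta-\frac{1}{2k}\int_{R_1}^{R_2}Q+O(k^{-3})$, $a'/a=O(k^{-2})$ and $\Theta'=k+O(k^{-1})$ then produces $A$.

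Second, the approximant in the Volterra step must be $\pm ik+S_1/(ik)+S_2/(ik)^2$. Its Riccati residual is $-(S_2'+S_1^2)/k^2+O(k^{-3})$ with $S_2'=-Q''/4$. This is where $Q\in C^2$ is used; $\theta_3$ is never needed. Variation of constants with kernel $\sin(k(r-t))/k$ plus Gronwall gives $e=O(k^{-3})$ and $e'=O(k^{-2})$. That is a relative error $O(k^{-3})$ in $(w,w'/k)$, hence in the phase, and $a'/a=O(k^{-2})$ at both endpoints. With only $\pm ik+S_1/(ik)$ the residual is $O(k^{-1})$ and Gronwall gives only $O(k^{-2})$, which does not by itself exclude a $k^{-2}$ phase term.

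Third, no implicit-function argument is needed, and hence no bound on $\partial_k$ of the remainder. Evaluate the exact phase condition at $k=k_p$, rearrange, and fix $N$ by the Sturm zero count.
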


For the proof see Appendix~\ref{app:odd}. Identity \eqref{eq:odd} is a refinement of the classical large-$p$ quantisation expansion for regular Sturm--Liouville problems in the spirit of Borg and Levitan (cf.~\cite{Borg1946,LevitanSargsjan1975,PoschelTrubowitz1987}); the point needed here is the cancellation of the $O(k^{-2})$ term, which upgrades the spacing error in Theorem~\ref{thm:spacing} from $O(p^{-2})$ to $O(p^{-3})$.

\begin{theorem}[Universal spacing]\label{thm:spacing}
Let $\delta=R_2-R_1$ and let $S_{\ell,d}$ be given by \eqref{eq:S}. As $p\to\infty$,
\begin{equation}\label{eq:spacing}
k_{\ell,p+1}-k_{\ell,p}
=
\frac{\pi}{\delta}
\left[
1-
\frac{S_{\ell,d}(\alpha_1,\alpha_2)}
{k_{\ell,p}^2\,\delta}
+
O\big((k_{\ell,p}R)^{-3}\big)
\right].
\end{equation}
\end{theorem}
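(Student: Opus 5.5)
The plan is to reduce the radial problem \eqref{eq:SL} to the Liouville normal form \eqref{eq:normal-form}, apply Lemma~\ref{lem:odd}, identify the constant $A$ with $S_{\ell,d}$ by comparison with Lemma~\ref{lem:Delta-asym}, and then subtract consecutive quantization relations.

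\textbf{Reduction to normal form.} Substituting $u(r)=r^{-(d-1)/2}w(r)$ removes the first-order term and gives $-w''+Qw=k^2w$ with
\[
Q(r)=\frac{\ell(\ell+d-2)+\frac{(d-1)(d-3)}{4}}{r^2}=\frac{(2\ell+d-2)^2-1}{4r^2},
\]
which is smooth on $[R_1,R_2]$. The Robin conditions become $w'(R_1)=\gamma_1w(R_1)$ and $w'(R_2)=-\gamma_2w(R_2)$, with $k$-independent constants
\[
\gamma_1=\alpha_1+\frac{d-1}{2R_1},\qquad \gamma_2=\alpha_2-\frac{d-1}{2R_2}.
\]
Lemma~\ref{lem:odd} then yields $k_p\delta=p\pi+A/k_p+B_p/k_p^3$. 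Using $\int Q\,dr=\frac{(2\ell+d-2)^2-1}{4}\cdot\frac{\delta}{\rho}$, we get
\[
A=\alpha_1+\alpha_2+\frac{d-1}{2}\,\frac{\delta}{\rho}+\frac{(2\ell+d-2)^2-1}{8}\,\frac{\delta}{\rho}=\alpha_1+\alpha_2+\frac{(2\ell+d-2)^2+4d-5}{8}\,\frac{\delta}{\rho}=S_{\ell,d}.
\]
As a consistency check, the roots of \eqref{eq:Delta-large-k} satisfy $\tan(k\delta)=S_{\ell,d}/k+O(k^{-2})$, which gives the same $A$. The first-order term is therefore fixed independently of the appendix computation.

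\textbf{Labelling of branches.} The index $p$ in \eqref{eq:odd} must be matched with the branch index $p$ of Theorem~\ref{thm:basic}. They can differ only by a fixed integer shift $p\mapsto p+p_0$, which follows from counting zeros, since the eigenvalues are simple and increase to infinity. A fixed shift does not affect consecutive differences, so it is irrelevant here.

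\textbf{Subtraction.} Subtracting the relations for $p+1$ and $p$ gives
\[
\delta\,(k_{p+1}-k_p)=\pi+A\Bigl(\frac1{k_{p+1}}-\frac1{k_p}\Bigr)+\frac{B_{p+1}}{k_{p+1}^3}-\frac{B_p}{k_p^3}.
\]
Since $k_{p+1}-k_p=\pi/\delta+O(k_p^{-2})$, we have
\[
\frac1{k_{p+1}}-\frac1{k_p}=-\frac{k_{p+1}-k_p}{k_pk_{p+1}}=-\frac{\pi}{\delta k_p^2}+O(k_p^{-3}),
\]
and the $B$ terms are $O(k_p^{-3})$ because $\{B_p\}$ is bounded. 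Dividing by $\delta$ gives $k_{p+1}-k_p=\frac{\pi}{\delta}\bigl[1-\frac{S_{\ell,d}}{k_p^2\delta}+O(k_p^{-3})\bigr]$. The factors of $R$ in the remainder follow from dimensional bookkeeping.

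\textbf{Main obstacle.} The essential ingredient is the absence of a $k^{-2}$ term in the quantization condition. Without it the spacing error would only be $O(p^{-2})$. Here this is supplied by Lemma~\ref{lem:odd}, so the remaining delicate point is verifying that the hypotheses hold: $Q\in C^2$ because $R_1>0$, and the $\gamma_i$ are $k$-independent.
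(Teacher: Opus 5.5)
Your proposal is correct and follows the paper's proof essentially step for step. It uses the same Liouville substitution $u=r^{-(d-1)/2}w$ with the same $Q$, $\gamma_1$, $\gamma_2$, applies Lemma~\ref{lem:odd}, evaluates $A=S_{\ell,d}$ directly (with Lemma~\ref{lem:Delta-asym} only as a cross-check), and subtracts consecutive quantization relations using the boundedness of $\{B_p\}$. Your remark that the Pr\"ufer zero-count index in \eqref{eq:odd} may differ from the branch label of Theorem~\ref{thm:basic} by a fixed shift, which cancels in consecutive differences, is a useful clarification the paper leaves implicit. Under the increasing labelling one in fact has $k_{\ell,p}\delta\approx(p-1)\pi$ rather than $p\pi$.
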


\begin{proof}
From \eqref{eq:Delta-large-k}, the roots satisfy
$\tan(k\delta)=S_{\ell,d}/k+O(k^{-2})$. Hence, with
$x_p=k_{\ell,p}\delta=p\pi+\varepsilon_p$,
\begin{equation}\label{eq:eps1}
\varepsilon_p=\frac{S_{\ell,d}\,\delta}{x_p}+O(x_p^{-2}),
\end{equation}
and in particular $\varepsilon_p=O(p^{-1})$. To upgrade the error, reduce the radial problem \eqref{eq:SL} to normal form by setting $u(r)=w(r)\,r^{-(d-1)/2}$: then $w$ solves \eqref{eq:normal-form} with
\[
Q(r)=\frac{\ell(\ell+d-2)}{r^2}+\frac{(d-1)(d-3)}{4r^2},\qquad
\gamma_1=\alpha_1+\frac{d-1}{2R_1},\quad
\gamma_2=\alpha_2-\frac{d-1}{2R_2},
\]
whose boundary data are $k$-independent. Lemma~\ref{lem:odd} therefore applies. Comparison of \eqref{eq:odd} with \eqref{eq:eps1} identifies the coefficient of $1/k_p$ as $S_{\ell,d}$; equivalently, direct evaluation gives
\[
A=\gamma_1+\gamma_2+\frac12\int_{R_1}^{R_2}Q(r)\,dr
=\alpha_1+\alpha_2+\frac{\delta}{\rho}\left[\frac{\ell(\ell+d-2)}{2}+\frac{d^2-1}{8}\right]
=S_{\ell,d},
\]
consistently with \eqref{eq:S}. Hence
\begin{equation}\label{eq:eps2}
\varepsilon_p=\frac{S_{\ell,d}\,\delta}{x_p}+\frac{B'_p}{x_p^3},
\qquad \{B'_p\}\ \text{bounded},
\end{equation}
and therefore, since the difference of the bounded remainders in
\eqref{eq:eps2} is $O(x_p^{-3})$,
\[
x_{p+1}-x_p
=\pi+S_{\ell,d}\delta\Bigl(\frac1{x_{p+1}}-\frac1{x_p}\Bigr)+O(x_p^{-3})
=\pi-\frac{\pi S_{\ell,d}\delta}{x_p^2}+O(x_p^{-3}).
\]
Since $x_p=k_{\ell,p}\delta$ exactly,
\[
k_{\ell,p+1}-k_{\ell,p}
=
\frac{\pi}{\delta}
\left[
1-\frac{S_{\ell,d}}{k_{\ell,p}^2\,\delta}
+O\big((k_{\ell,p}R)^{-3}\big)
\right],
\]
which is \eqref{eq:spacing}.
\end{proof}

The leading spacing $\pi/\delta$ is the Weyl spacing of the shell and is independent of $\alpha_1,\alpha_2,\ell,d$. The second-order correction is the new shell-specific term: it contains the impedance sum $\alpha_1+\alpha_2$ and the geometric curvature coupling $K_{\ell,d}\delta/(R_1R_2)$. In dimension $d=3$, the radial correction is simply
\[
S_{0,3}=\alpha_1+\alpha_2+\frac{R_2-R_1}{R_1R_2},
\]
which is exact to the displayed order for the $\ell=0$ spherical-shell problem.

\section{Bilinear determinant and explicit impedance recovery}\label{sec:inverse}
The key algebraic fact behind resonance-based recovery is that the shell characteristic determinant is not merely transcendental in $k$; it is a polynomial of degree one in each impedance separately. (Determinants of the same structure govern the interior transmission problem, where the boundary parameters enter through the mismatch of two radial bases; see \cite{CakoniGintidesHaddar2010}.) This section makes that structure explicit and derives the resulting inversion formula.

\subsection{Bilinear expansion of the characteristic determinant}
Write
\[
Z_{\mu,i}=Z_\mu(kR_i),\qquad Z_{\mu+1,i}=Z_{\mu+1}(kR_i),\qquad L_i=\frac{\ell}{R_i},\qquad i=1,2.
\]
Substituting the traces \eqref{eq:trace1}--\eqref{eq:trace2} into \eqref{eq:char} and collecting powers of $\alpha_1,\alpha_2$ gives the following lemma.

\begin{lemma}[Bilinear determinant]\label{lem:bilinear}
There are explicit functions $\mathrm A_\ell,\mathrm B_\ell,\mathrm C_\ell,\mathrm D_\ell$ of $k,\ell,R_1,R_2$ such that
\begin{equation}\label{eq:bilinear}
\Delta_\ell(k;\alpha_1,\alpha_2)
=
\mathrm A_\ell(k)\alpha_1\alpha_2
+
\mathrm B_\ell(k)\alpha_1
+
\mathrm C_\ell(k)\alpha_2
+
\mathrm D_\ell(k).
\end{equation}
More precisely, suppressing the argument $k$ and factoring out $R_1^{-\nu}R_2^{-\nu}$,
\begin{align}
\mathrm A_\ell
&=
J_{\mu,1}Y_{\mu,2}-J_{\mu,2}Y_{\mu,1},
\label{eq:detA}\\[2pt]
\mathrm B_\ell
&=
L_2\big(J_{\mu,1}Y_{\mu,2}-J_{\mu,2}Y_{\mu,1}\big)
+
k\big(J_{\mu+1,2}Y_{\mu,1}-J_{\mu,1}Y_{\mu+1,2}\big),
\label{eq:detB}\\[2pt]
\mathrm C_\ell
&=
-L_1\big(J_{\mu,1}Y_{\mu,2}-J_{\mu,2}Y_{\mu,1}\big)
+
k\big(J_{\mu+1,1}Y_{\mu,2}-J_{\mu,2}Y_{\mu+1,1}\big),
\label{eq:detC}\\[2pt]
\mathrm D_\ell &= \big(kJ_{\mu+1,1}-L_1J_{\mu,1}\big)\big(L_2Y_{\mu,2}-kY_{\mu+1,2}\big)-\big(L_2J_{\mu,2}-kJ_{\mu+1,2}\big)\big(kY_{\mu+1,1}-L_1Y_{\mu,1}\big). \label{eq:detD}
\end{align}
In dimension $d=2$ the same formulas hold with $\nu=0$, $\mu=n\in\mathbb Z$, and common factor $1$; in dimension $d=3$ they follow directly from \eqref{eq:B1-3D}--\eqref{eq:B2-3D}.
\end{lemma}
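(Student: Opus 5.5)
The statement is an algebraic identity, so the plan is a direct expansion. I would substitute the traces \eqref{eq:trace1}--\eqref{eq:trace2}, with $Z_\mu\in\{J_\mu,Y_\mu\}$, into the definition \eqref{eq:char} of $\Delta_\ell$ and use that each trace is affine in exactly one impedance.

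\textbf{Step 1: split each trace.} First record
\[
\mathcal B_1[Z_\mu]=R_1^{-\nu}\bigl(\alpha_1Z_{\mu,1}+P_1[Z]\bigr),\qquad P_1[Z]:=kZ_{\mu+1,1}-L_1Z_{\mu,1},
\]
\[
\mathcal B_2[Z_\mu]=R_2^{-\nu}\bigl(\alpha_2Z_{\mu,2}+P_2[Z]\bigr),\qquad P_2[Z]:=L_2Z_{\mu,2}-kZ_{\mu+1,2},
\]
where the $P_i$ do not depend on the impedances.

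\textbf{Step 2: expand the determinant.} Since $\mathcal B_1$ involves only $\alpha_1$ and $\mathcal B_2$ only $\alpha_2$, each product $\mathcal B_1[\cdot]\,\mathcal B_2[\cdot]$ is bilinear, and hence so is $\Delta_\ell$. After factoring out $R_1^{-\nu}R_2^{-\nu}$, the four coefficients are read off as follows:
\begin{itemize}
\item the $\alpha_1\alpha_2$ coefficient is $J_{\mu,1}Y_{\mu,2}-J_{\mu,2}Y_{\mu,1}$;
\item the $\alpha_1$ coefficient is $J_{\mu,1}P_2[Y]-Y_{\mu,1}P_2[J]$;
\item the $\alpha_2$ coefficient is $P_1[J]Y_{\mu,2}-P_1[Y]J_{\mu,2}$;
\item the constant term is $P_1[J]P_2[Y]-P_2[J]P_1[Y]$.
\end{itemize}

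\textbf{Step 3: match with the stated forms.} This is pure bookkeeping.
\begin{itemize}
\item Inserting $P_2$ into the $\alpha_1$ coefficient gives $L_2(J_{\mu,1}Y_{\mu,2}-J_{\mu,2}Y_{\mu,1})+k(J_{\mu+1,2}Y_{\mu,1}-J_{\mu,1}Y_{\mu+1,2})$. This is \eqref{eq:detB}.
\item Inserting $P_1$ into the $\alpha_2$ coefficient gives $-L_1(J_{\mu,1}Y_{\mu,2}-J_{\mu,2}Y_{\mu,1})+k(J_{\mu+1,1}Y_{\mu,2}-J_{\mu,2}Y_{\mu+1,1})$. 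This is \eqref{eq:detC}.
\item The constant term is \eqref{eq:detD} verbatim, once we note $P_1[Y]=kY_{\mu+1,1}-L_1Y_{\mu,1}$.
\end{itemize}

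\textbf{Step 4: the remaining dimensions.}
\begin{itemize}
\item For $d=2$ we have $\nu=0$, so the prefactors equal $1$ and $\mu=n$.
\item For $d=3$ the identical computation applies to \eqref{eq:B1-3D}--\eqref{eq:B2-3D} with $z_\ell$ in place of $Z_\mu$. Those traces have exactly the same affine structure, because $j_\ell$ and $y_\ell$ obey the same recurrence $z_\ell'=\frac{\ell}{z}z_\ell-z_{\ell+1}$.
\end{itemize}

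\textbf{Where care is needed.} There is no real obstacle. The only places that need care are the sign conventions in $P_1$ versus $P_2$, which come from the opposite orientations of the outward normals, and keeping track of which Bessel function carries index $\mu+1$ in each cross term.
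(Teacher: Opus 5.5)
Your proposal is correct and follows the same route as the paper's proof. Both write each Robin trace as an affine function of its own impedance, expand the $2\times2$ determinant, and read off the $\alpha_1\alpha_2$, $\alpha_1$, $\alpha_2$ and constant coefficients. Your splitting into $P_1[Z]$, $P_2[Z]$ simply spells out the bookkeeping that the paper only summarises (the centrifugal contributions $\pm L_i\mathrm A_\ell$ plus the recurrence terms), and each of your expanded coefficients matches the stated formulas.
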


\begin{proof}
The coefficients $\mathrm A_\ell,\dots,\mathrm D_\ell$ of this lemma are unrelated to the modal amplitudes $\mathcal A_\ell,\mathcal B_\ell$ in \eqref{eq:Al-3D}--\eqref{eq:Bl-3D}. Expand each trace in \eqref{eq:trace1}--\eqref{eq:trace2} as an affine function of its corresponding impedance. The coefficient of $\alpha_1\alpha_2$ comes only from the products of the pure $Z_\mu(kR_i)$ terms and equals \eqref{eq:detA}. The coefficients of $\alpha_1$ and $\alpha_2$ contain, respectively, the outer centrifugal contribution $L_2\mathrm A_\ell$ and the inner centrifugal contribution $-L_1\mathrm A_\ell$, together with the Bessel recurrence terms in \eqref{eq:detB}--\eqref{eq:detC}. All remaining terms form \eqref{eq:detD}. The common factor
$R_1^{-\nu}R_2^{-\nu}$ cancels completely from the elimination procedure
below and may be omitted in numerical implementations.
\end{proof}

\subsection{Two resonances determine a quadratic equation}
Suppose two resonant wavenumbers
\[
k^{(1)}=k_{\ell,p}(\alpha_1,\alpha_2),\qquad
k^{(2)}=k_{\ell,q}(\alpha_1,\alpha_2)
\]
of the same angular mode $\ell$ are known, with $p\ne q$ and with branch labels fixed by Theorem~\ref{thm:basic}. Define
\[
A_i=\mathrm A_\ell(k^{(i)}),\quad
B_i=\mathrm B_\ell(k^{(i)}),\quad
C_i=\mathrm C_\ell(k^{(i)}),\quad
D_i=\mathrm D_\ell(k^{(i)}),\qquad i=1,2.
\]
Then $(\alpha_1,\alpha_2)$ satisfies the two bilinear equations
\begin{equation}\label{eq:two-bilinear}
A_i\alpha_1\alpha_2+B_i\alpha_1+C_i\alpha_2+D_i=0,\qquad i=1,2.
\end{equation}
Provided $A_1\alpha_1+C_1\ne0$, the first equation gives
\begin{equation}\label{eq:alpha2}
\alpha_2=-\frac{B_1\alpha_1+D_1}{A_1\alpha_1+C_1}.
\end{equation}
Substituting \eqref{eq:alpha2} into the second bilinear equation and clearing denominators yields
\begin{equation}\label{eq:quadratic}
p_2\alpha_1^2+p_1\alpha_1+p_0=0,
\end{equation}
where
\begin{equation}\label{eq:p-coeffs}
p_0=C_1D_2-C_2D_1,\qquad
p_1=A_1D_2-A_2D_1+B_2C_1-B_1C_2,\qquad
p_2=A_1B_2-A_2B_1.
\end{equation}

\begin{proposition}[Explicit algebraic recovery]\label{prop:recovery}
Let $k^{(1)},k^{(2)}$ be two distinct resonant wavenumbers of the same mode $\ell$, and assume the nondegeneracy conditions
\begin{equation}\label{eq:nondegenerate}
A_1\alpha_1+C_1\ne0,\qquad
p_2^2+p_1^2+p_0^2\ne0 .
\end{equation}
Then every physical impedance pair $(\alpha_1,\alpha_2)$ compatible with $(k^{(1)},k^{(2)})$ is obtained as follows: solve \eqref{eq:quadratic}, retain every root $\alpha_1>0$, compute $\alpha_2$ from \eqref{eq:alpha2}, and retain the pair only if
\begin{equation}\label{eq:physical}
\alpha_2>0,
\qquad
\Delta_\ell(k^{(1)};\alpha_1,\alpha_2)=0,
\qquad
\Delta_\ell(k^{(2)};\alpha_1,\alpha_2)=0 .
\end{equation}
\end{proposition}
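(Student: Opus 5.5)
The plan is to prove the statement as a necessity statement: any physical pair compatible with the data must appear in the output of the procedure. I would then add the converse that every retained pair is compatible. Fix a physical pair $(\alpha_1,\alpha_2)\in(0,\infty)^2$ for which $k^{(1)},k^{(2)}$ are resonances of mode $\ell$. By definition of the resonance set and Lemma~\ref{lem:bilinear}, $\Delta_\ell(k^{(i)};\alpha_1,\alpha_2)=0$ for $i=1,2$. These two conditions are exactly the bilinear equations \eqref{eq:two-bilinear}, with coefficients $A_i,\dots,D_i$ evaluated at the known wavenumbers. Everything after this is elimination algebra, and the task is to check that no solution is lost along the way.

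First, the first equation of \eqref{eq:two-bilinear} reads $\alpha_2(A_1\alpha_1+C_1)=-(B_1\alpha_1+D_1)$. Under the first nondegeneracy condition $A_1\alpha_1+C_1\neq0$ we may divide, which gives \eqref{eq:alpha2}. Second, we substitute into the second equation and multiply by $A_1\alpha_1+C_1\neq0$. This gives
\[
(A_2\alpha_1+C_2)\bigl(-(B_1\alpha_1+D_1)\bigr)+(B_2\alpha_1+D_2)(A_1\alpha_1+C_1)=0 .
\]
Expanding this and collecting powers of $\alpha_1$ reproduces \eqref{eq:quadratic} with exactly the coefficients \eqref{eq:p-coeffs}. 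This is a direct check: the $\alpha_1^2$ coefficient is $A_1B_2-A_2B_1$, the constant term is $C_1D_2-C_2D_1$, and the linear term is $A_1D_2-A_2D_1+B_2C_1-B_1C_2$. Hence the true $\alpha_1$ is a positive root of \eqref{eq:quadratic}.

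The second nondegeneracy condition ensures the polynomial is not identically zero. It therefore has at most two roots, or at most one if $p_2=0$, so the candidate list is finite and is produced by the quadratic formula (or the linear one when $p_2=0$). The true $\alpha_1$ is among the retained positive roots. Its partner computed from \eqref{eq:alpha2} is the true $\alpha_2>0$, and it satisfies \eqref{eq:physical} trivially, so the pair survives every filter.

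Conversely, any retained pair satisfies $\alpha_i>0$ and both determinant equations. By Theorem~\ref{thm:shell} and the Sturm--Liouville identification \eqref{eq:SL}, this means $k^{(1)},k^{(2)}$ are indeed eigenvalues of the Robin problem of mode $\ell$ for that pair, so the pair is compatible. The explicit recheck of $\Delta_\ell=0$ in \eqref{eq:physical} is what guards against spurious roots in the degenerate case where $A_1\alpha_1+C_1=0$ at a computed root.

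The main subtlety, rather than a real obstacle, is the role of the branch labels $p,q$. Compatibility in the proposition only asks that $k^{(i)}$ be resonances, so the argument need not verify that $k^{(1)}$ is the $p$-th root. If label-compatibility is intended, I would add a final check that counts the zeros of $\Delta_\ell(\cdot;\alpha_1,\alpha_2)$ below $k^{(i)}$, which is unambiguous by the simplicity in Theorem~\ref{thm:basic}. Such a check only discards candidates and never the physical pair.
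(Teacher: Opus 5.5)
Your proposal is correct and follows the paper's own argument. As in the paper, compatibility is equivalent to the two bilinear equations \eqref{eq:two-bilinear}, elimination under \eqref{eq:nondegenerate} forces the true $\alpha_1$ to be a positive root of \eqref{eq:quadratic}, and the checks \eqref{eq:physical} give the converse. Your explicit expansion reproducing \eqref{eq:p-coeffs}, and your observation that spurious roots can only arise where $A_1\alpha_1+C_1=0$ (so in exact arithmetic the determinant checks are redundant elsewhere), make precise what the paper states in a single line.
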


\begin{proof}
By Lemma~\ref{lem:bilinear}, compatibility is equivalent to \eqref{eq:two-bilinear}. Under \eqref{eq:nondegenerate}, elimination is legitimate and every solution must satisfy \eqref{eq:quadratic}. Conversely, every positive root $\alpha_1$ of \eqref{eq:quadratic} for which \eqref{eq:alpha2} gives $\alpha_2>0$ is a candidate; the two determinant checks in \eqref{eq:physical} remove extraneous roots introduced by clearing denominators.
\end{proof}

\begin{remark}[Number of candidate pairs; two-spectrum context]\label{rem:two-candidates}
Equation \eqref{eq:quadratic} is quadratic in $\alpha_1$, so \emph{at
most two} candidate pairs $(\alpha_1,\alpha_2)$ can be compatible with
two measured resonances of one angular mode; the map
$(\alpha_1,\alpha_2)\mapsto(k^{(1)},k^{(2)})$ need not be globally
injective when the two resonances are close together or when the cavity
is nearly symmetric in its boundary masses. The ambiguity is resolved by
one further datum---either a third resonance $k^{(3)}$ of the same mode,
which produces a second quadratic $q^{(13)}(\alpha_1)=0$ whose root in
common with $q^{(12)}$, together with \eqref{eq:alpha2}, selects the
physical pair, or a resonance of a different angular mode. In all
computations of Section~\ref{sec:numerics} this filter leaves at most
one pair. We do not claim a global uniqueness theorem: the reflection
map of Remark~\ref{rem:reflection} shows that the radial spherical mode
never identifies the pair throughout the region
\eqref{eq:reflection-region}, and near-degenerate configurations can
admit two admissible pairs; local uniqueness is exactly characterised
by the Jacobian criterion of Proposition~\ref{prop:conditioning}.
Structurally, the recovery problem is a finite-dimensional shadow of the
classical inverse Sturm--Liouville problem: two ``spectra'' in the sense
of Borg \cite{Borg1946} determine the operator uniquely, whereas here
only finitely many eigenvalues are available and one must content
oneself with the algebraic candidate generation of
Proposition~\ref{prop:recovery}; see also
\cite{GesztesySimon2000,Isakov2006,Sincich2007,RundellSacks1992} for inverse
Robin impedance problems and their reconstruction techniques.
\end{remark}

\begin{remark}[Reflection-symmetric exceptional case]\label{rem:reflection}
For $d=3$ and $\ell=0$, writing $u(r)=w(r)/r$ reduces the radial equation to $w''+k^2w=0$. Reflection $r\mapsto R_1+R_2-r$ swaps the two boundary logarithmic derivatives and leaves the spectrum unchanged. Explicitly, $(\alpha_1,\alpha_2)$ and
\[
\alpha_1'=\alpha_2-\frac1{R_1}-\frac1{R_2},\qquad
\alpha_2'=\alpha_1+\frac1{R_1}+\frac1{R_2}
\]
have the same $\ell=0$ spectrum for every $k$. The reflected pair is distinct from the original pair iff $\alpha_2-\alpha_1\neq R_1^{-1}+R_2^{-1}$, and it lies in the positive quadrant iff
\begin{equation}\label{eq:reflection-region}
\alpha_2>\frac1{R_1}+\frac1{R_2}.
\end{equation}
Consequently, throughout the region \eqref{eq:reflection-region}, with the exception of the self-reflective line $\alpha_2-\alpha_1=R_1^{-1}+R_2^{-1}$ on which the reflected pair coincides with the original one, the $\ell=0$ spectrum admits a second admissible pair and cannot identify the impedances. For $R_1=1$, $R_2=2$ the pair $(\alpha_1,\alpha_2)=(1,1)$ has the reflection partner $(-1/2,5/2)$, which is isospectral for $\ell=0$ but has a negative inner impedance; in contrast, the pair $(2,3)$ lies in the region \eqref{eq:reflection-region} and has the admissible partner $(3/2,7/2)$, which is genuinely isospectral within the physical quadrant. For $\alpha_2<R_1^{-1}+R_2^{-1}$ the reflected partner is nonphysical, so the obstruction does not apply; whether the radial mode identifies the pair in this region is left open (cf.\ Remark~\ref{rem:2Dradial}). In any case, since the reflection map is an involution on the impedance plane, no finite collection of $\ell=0$ spherical-shell resonances distinguishes a pair from its reflection partner whenever the latter is admissible, and a non-radial mode $\ell\ge1$ or additional boundary/norming data is necessary there.
\end{remark}

\begin{remark}[The two-dimensional radial mode]\label{rem:2Dradial}
No analogous obstruction is known for the two-dimensional radial
mode: the substitution $u(r)=w(r)r^{-1/2}$ leaves an effective
$1/(4r^2)$ potential, so the equation does not reduce to constant
coefficients and the reflection argument does not apply. Whether two
$n=0$ annular resonances identify the impedance pair is left open (the
algebraic procedure applies verbatim to $n=0$ data under
\eqref{eq:nondegenerate}, but uniqueness is not covered); all recovery
experiments below use the non-radial modes $n=1$ (2D) and $\ell=1$ (3D).
\end{remark}

\begin{remark}[Failure of elimination and ill-conditioned regions]\label{rem:elimination}
The denominator $A_1\alpha_1+C_1$ vanishes only in degenerate
configurations, and elimination fails exactly when the two coefficient
rows are proportional, $(A_1,B_1,C_1,D_1)\parallel(A_2,B_2,C_2,D_2)$, in
which case all three coefficients \eqref{eq:p-coeffs} vanish
identically. This proportionality is precisely the condition that the
two columns of the Jacobian \eqref{eq:Jacobian} become parallel, so the
algebraic ill-posedness of the elimination and the analytic
ill-conditioning of the resonance map diverge together
(Proposition~\ref{prop:conditioning}); it is diagnosed numerically by a
vanishing leading coefficient in \eqref{eq:alpha2} together with near
dependence of the two equations \eqref{eq:two-bilinear}. Such
configurations occur near $\alpha_1\approx\alpha_2$ with
$R_1\approx R_2$, or when $k^{(1)}\approx k^{(2)}$ on nearby branches,
and are exactly the ill-conditioned ones quantified by the Jacobian in
Section~\ref{sec:sensitivity}---the analytical counterpart of the
two-positive-root ambiguity noted above.
\end{remark}

\begin{remark}[Branch assignment]\label{rem:branch}
Proposition~\ref{prop:recovery} assumes that $k^{(1)}$ and $k^{(2)}$
belong to the same angular mode $\ell$; for experimental data this
assignment can be made from the spacing law \eqref{eq:spacing}, the
transparent threshold $k_c$, or a calibration perturbation of one
impedance (quantified in Remark~\ref{rem:assignment}).
\end{remark}

\section{Sensitivity, conditioning, and recovery algorithm}\label{sec:sensitivity}
The algebraic inversion formula of Section~\ref{sec:inverse} is exact for noiseless data assigned to the correct branches. For measured resonances, the relevant object is the Jacobian of the resonance map with respect to the two impedances. The explicit kernels make this Jacobian computable without solving an additional eigenvalue problem.

\subsection{Boundary-mass sensitivity}
Fix a mode $(\ell,p)$ and let $u$ be the corresponding radial eigenfunction, normalised by
\[
\int_{R_1}^{R_2}|u(r)|^2r^{d-1}\,dr=1.
\]
The eigenvalue $\lambda=k^2$ is the Rayleigh quotient
\[
\lambda
=
\int_{R_1}^{R_2}
\left(
|u'|^2+\frac{\ell(\ell+d-2)}{r^2}|u|^2
\right)r^{d-1}\,dr
+
\alpha_1R_1^{d-1}|u(R_1)|^2
+
\alpha_2R_2^{d-1}|u(R_2)|^2 .
\]
Define the boundary-mass terms (not to be confused with the spherical-harmonic index $m$ or with the modal coefficient $b_\ell$ of \eqref{eq:b_l})
\begin{equation}\label{eq:bmass}
b_i(\ell,p;\alpha_1,\alpha_2)
=
R_i^{d-1}|u(R_i)|^2,\qquad i=1,2.
\end{equation}

\begin{proposition}[Sensitivity identity]\label{prop:sensitivity}
For every mode $(\ell,p)$,
\begin{equation}\label{eq:sensitivity}
\frac{\partial k_{\ell,p}}{\partial \alpha_i}
=
\frac{b_i(\ell,p;\alpha_1,\alpha_2)}{2k_{\ell,p}},
\qquad i=1,2.
\end{equation}
Moreover $b_1+b_2>0$; indeed an eigenfunction cannot vanish at both boundaries, since vanishing Dirichlet data together with the Robin condition would give zero Cauchy data and hence $u\equiv0$ by unique continuation.
\end{proposition}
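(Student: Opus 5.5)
The plan is to derive \eqref{eq:sensitivity} from the Hellmann--Feynman identity for $\lambda=k^2$, exactly as in the monotonicity part of Theorem~\ref{thm:basic}, and then convert from $\lambda$ to $k$ by the chain rule. Fix $(\ell,p)$. By Theorem~\ref{thm:basic} the eigenvalue $\lambda_{\ell,p}$ is simple and real-analytic in $(\alpha_1,\alpha_2)$. Because the eigenvalue is simple and the boundary conditions in \eqref{eq:SL} depend analytically on the impedances, the normalised eigenfunction $u=u(\cdot;\alpha_1,\alpha_2)$ can be chosen real-valued and differentiable in $\alpha_i$. One way to build it is from the shooting solution $\Phi$ used in the proof of Theorem~\ref{thm:basic}, evaluated at $k=k_{\ell,p}(\alpha_1,\alpha_2)$ and then divided by its weighted $L^2$ norm.

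Next, differentiate the Rayleigh identity $\lambda=\mathcal Q_{\alpha}[u]$, where $\mathcal Q_\alpha$ is the quadratic form displayed above. The derivative with respect to $\alpha_i$ has two parts. The first is the explicit term $R_i^{d-1}|u(R_i)|^2$. The second is $2\mathcal Q_\alpha(u,\partial_{\alpha_i}u)$. To handle the second part, I will show that it vanishes by integrating by parts. Using the equation in \eqref{eq:SL} and the Robin conditions, the bilinear form satisfies $\mathcal Q_\alpha(u,v)=\lambda\int u v\,r^{d-1}dr$ for every $v\in H^1$. The boundary terms $-R_1^{d-1}u'(R_1)v(R_1)+R_2^{d-1}u'(R_2)v(R_2)$ cancel exactly against $\alpha_1R_1^{d-1}u(R_1)v(R_1)+\alpha_2R_2^{d-1}u(R_2)v(R_2)$. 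Apply this with $v=\partial_{\alpha_i}u$ and differentiate the normalisation $\int u^2r^{d-1}dr=1$, which gives $\int u\,\partial_{\alpha_i}u\,r^{d-1}dr=0$. Hence the second part vanishes and $\partial_{\alpha_i}\lambda=b_i$. Since $k_{\ell,p}=\lambda^{1/2}>0$, the chain rule gives $\partial_{\alpha_i}k=b_i/(2k)$, which is \eqref{eq:sensitivity}.

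For the positivity claim it actually suffices to note that each $b_i>0$ separately, as shown in the proof of Theorem~\ref{thm:basic}. I will nevertheless give the argument in the stated form. Suppose $b_1+b_2=0$. Then $u(R_1)=u(R_2)=0$, and the Robin condition at, say, $R_2$ forces $u'(R_2)=0$. The function $u$ would then have zero Cauchy data at $R_2$, so uniqueness for the regular ODE initial value problem gives $u\equiv0$. This contradicts the normalisation.

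The only delicate step is the differentiability of $u$ in $\alpha_i$, which is needed so that $\partial_{\alpha_i}u\in H^1$ is a legitimate test function. I expect this to be the main, though mild, obstacle. I would settle it using the shooting construction: $\Phi(r;k)$ depends analytically on $(k,\alpha_1)$ through its Cauchy data $\Phi(R_1)=1$, $\Phi'(R_1)=\alpha_1$, and $k_{\ell,p}$ is analytic in $(\alpha_1,\alpha_2)$. Composing these gives an analytic, nonvanishing eigenfunction, and normalising it preserves analyticity.
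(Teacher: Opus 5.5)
Your proposal is correct and follows the paper's route: the paper also obtains $\partial_{\alpha_i}\lambda_{\ell,p}=b_i$ from the envelope (Hellmann--Feynman) argument on the Rayleigh quotient, converts via $k_{\ell,p}=\lambda_{\ell,p}^{1/2}$, and proves $b_1+b_2>0$ by the same zero-Cauchy-data argument. Your explicit verification that $2\mathcal Q_\alpha(u,\partial_{\alpha_i}u)=\lambda\,\partial_{\alpha_i}\int u^2r^{d-1}\,dr=0$, and your shooting-based justification that $u$ is differentiable in the impedances, fill in details the paper leaves implicit; your remark that each $b_i>0$ separately is also right.
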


\begin{proof}
For fixed $(\alpha_1,\alpha_2)$ the eigenpair $(u_{\ell,p},\lambda_{\ell,p})$ is a critical point of the Rayleigh quotient $\mathcal R$ of \eqref{eq:SL} on the unit sphere of $L^2((R_1,R_2);r^{d-1}dr)$, and $\mathcal R[u_{\ell,p}]=\lambda_{\ell,p}$. Since the first variation of $\mathcal R$ at $u_{\ell,p}$ is proportional to the variation of the constraint, the envelope theorem gives
\[
\frac{\partial\lambda_{\ell,p}}{\partial\alpha_i}
=\frac{\partial\mathcal R}{\partial\alpha_i}[u_{\ell,p}]
=R_i^{d-1}|u_{\ell,p}(R_i)|^2=b_i,\qquad i=1,2,
\]
and \eqref{eq:sensitivity} follows from $k_{\ell,p}=\lambda_{\ell,p}^{1/2}>0$. Finally, an eigenfunction cannot vanish at both boundaries:
$u(R_1)=u(R_2)=0$ would give $u'(R_1)=\alpha_1u(R_1)=0$ and $u'(R_2)=-\alpha_2u(R_2)=0$ from the boundary conditions, and vanishing Cauchy data at either endpoint forces $u\equiv0$ by uniqueness for \eqref{eq:SL}. Hence $b_1+b_2>0$.
\end{proof}

In computations it is convenient to avoid normalised eigenfunctions explicitly. If $k$ is a root of \eqref{eq:char}, choose the unnormalised radial profile
\begin{equation}\label{eq:phi}
\phi(r)=r^{-\nu}\big(J_\mu(kr)+\tau Y_\mu(kr)\big),
\qquad
\tau=
-\frac{
kJ_{\mu+1}(kR_1)+(\alpha_1-\ell/R_1)J_\mu(kR_1)
}{
kY_{\mu+1}(kR_1)+(\alpha_1-\ell/R_1)Y_\mu(kR_1)
},
\end{equation}
so that the inner Robin condition holds by construction; the outer condition is then equivalent to $\Delta_\ell(k;\alpha_1,\alpha_2)=0$. With $u=\phi/\|\phi\|_{L^2(r^{d-1}dr)}$,
\begin{equation}\label{eq:mass-computable}
b_i=
R_i^{d-1}\frac{|\phi(R_i)|^2}{\|\phi\|_{L^2(r^{d-1}dr)}^2}.
\end{equation}
The same formula holds in two dimensions with $\nu=0$, $\mu=n$, and in three dimensions with $j_\ell,y_\ell$.

\subsection{Conditioning of two-frequency recovery}
Let
\[
k^{(1)}=k_{\ell,p},\qquad k^{(2)}=k_{\ell,q},\qquad p\ne q,
\]
and set
\[
b_i^{(j)}=b_i(\ell,p_j;\alpha_1,\alpha_2),\qquad i=1,2,\quad j=1,2,
\]
where $p_1=p$ and $p_2=q$. By Proposition~\ref{prop:sensitivity},
\begin{equation}\label{eq:Jacobian}
\delta
\begin{pmatrix}
k^{(1)}\\[2pt] k^{(2)}
\end{pmatrix}
=
J
\delta
\begin{pmatrix}
\alpha_1\\[2pt] \alpha_2
\end{pmatrix}
+
\text{higher-order terms},
\qquad
J=
\begin{pmatrix}
\dfrac{b_1^{(1)}}{2k^{(1)}} & \dfrac{b_2^{(1)}}{2k^{(1)}}\\[8pt]
\dfrac{b_1^{(2)}}{2k^{(2)}} & \dfrac{b_2^{(2)}}{2k^{(2)}}
\end{pmatrix}.
\end{equation}
Hence
\begin{equation}\label{eq:detJ}
\det J
=
\frac{
b_1^{(1)}b_2^{(2)}-b_2^{(1)}b_1^{(2)}
}{
4k^{(1)}k^{(2)}
}.
\end{equation}

\begin{proposition}[Local invertibility and error bound]\label{prop:conditioning}
The two-frequency map $(\alpha_1,\alpha_2)\mapsto(k^{(1)},k^{(2)})$ is locally invertible iff
\begin{equation}\label{eq:identifiability}
b_1^{(1)}b_2^{(2)}\ne b_2^{(1)}b_1^{(2)} .
\end{equation}
If measured frequencies satisfy $\|\delta k\|_2\le \eta$ and $J$ is invertible, then the first-order recovery error obeys
\begin{equation}\label{eq:error-bound}
\|\delta\alpha\|_2
\le
\|J^{-1}\|_2\,\eta .
\end{equation}
For isotropic relative frequency noise $|\delta k^{(j)}|\le \varepsilon |k^{(j)}|$, one may take
$\eta=\varepsilon\sqrt{(k^{(1)})^2+(k^{(2)})^2}$.
\end{proposition}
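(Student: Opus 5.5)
Proposition~\ref{prop:conditioning} is the inverse function theorem applied to the resonance map, followed by a linear-algebra estimate. By Theorem~\ref{thm:basic}, each branch $(\alpha_1,\alpha_2)\mapsto k_{\ell,p}(\alpha_1,\alpha_2)$ is real-analytic on $(0,\infty)^2$. Hence the two-frequency map $\mathcal K:(\alpha_1,\alpha_2)\mapsto(k_{\ell,p},k_{\ell,q})$ is real-analytic, and by Proposition~\ref{prop:sensitivity} its derivative is exactly the matrix $J$ of \eqref{eq:Jacobian}.

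\textbf{Local invertibility.} If \eqref{eq:identifiability} holds, then $\det J\neq0$ by \eqref{eq:detJ}, since the denominator $4k^{(1)}k^{(2)}$ is positive by Theorem~\ref{thm:basic}. The analytic inverse function theorem then gives a local analytic inverse. For the converse, I will interpret ``locally invertible'' in the sense of the inverse function theorem, i.e.\ as a local diffeomorphism with differentiable inverse. Suppose $\det J=0$ and a differentiable local inverse $\mathcal K^{-1}$ existed. Differentiating $\mathcal K^{-1}\circ\mathcal K=\mathrm{id}$ by the chain rule would give $D\mathcal K^{-1}\,J=I$, which is impossible for a singular $J$. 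This is the one delicate point: a merely bijective analytic map can have a degenerate Jacobian at isolated points (as $x\mapsto x^3$ does), so the equivalence must be stated for local invertibility with differentiable inverse. I will state this interpretation explicitly. One could additionally link the degenerate case to Remark~\ref{rem:elimination}, but that is not needed.

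\textbf{Error bound.} Linearising $\delta k=J\,\delta\alpha+O(\|\delta\alpha\|^2)$ and dropping the higher-order term gives the first-order relation $\delta\alpha=J^{-1}\delta k$. Therefore
\[
\|\delta\alpha\|_2\le\|J^{-1}\|_2\,\|\delta k\|_2\le\|J^{-1}\|_2\,\eta,
\]
which is \eqref{eq:error-bound}. If a nonlinear statement is wanted, I would add that, by the inverse function theorem with an analytic inverse, the true error is $\|J^{-1}\|_2\eta+O(\eta^2)$.

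\textbf{Relative noise.} If $|\delta k^{(j)}|\le\varepsilon|k^{(j)}|$ for $j=1,2$, then summing squares gives
\[
\|\delta k\|_2^2\le\varepsilon^2\bigl((k^{(1)})^2+(k^{(2)})^2\bigr),
\]
so one may take $\eta=\varepsilon\sqrt{(k^{(1)})^2+(k^{(2)})^2}$ as stated.
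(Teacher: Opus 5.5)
Your proposal is correct and follows the route the paper takes implicitly (the paper gives no separate proof): the Jacobian \eqref{eq:Jacobian} from Proposition~\ref{prop:sensitivity}, positivity of $4k^{(1)}k^{(2)}$ in \eqref{eq:detJ}, the inverse function theorem, and the operator-norm estimate. Your caveat that the converse holds only when local invertibility means invertibility with a differentiable inverse (the $x\mapsto x^3$ example) is a legitimate sharpening that the paper leaves unstated; in your nonlinear addendum, the true error should be bounded by $\|J^{-1}\|_2\eta+O(\eta^2)$, not equal to it.
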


Proposition~\ref{prop:conditioning} gives a practical mode-selection rule: among candidate resonance pairs of known angular label, choose the pair minimising $\kappa_2(J)=\|J\|_2\|J^{-1}\|_2$, or an upper bound obtained from \eqref{eq:error-bound}. Pairs violating \eqref{eq:identifiability} are invisible to one linear combination of $(\alpha_1,\alpha_2)$ and must be replaced by a different branch or a different angular mode.

\begin{algorithm}[Resonance-based impedance recovery]\label{alg:recovery}

\noindent\textbf{Input.} Geometry $R_1,R_2,d$; angular label $\ell$; measured resonances $\{k^{(j)}\}_{j=1}^m$; tolerance $\mathrm{tol}$; noise level $\varepsilon$.

For $d=3$ the radial mode $\ell=0$ cannot identify the pair in the
region \eqref{eq:reflection-region}, where a second admissible pair
exists (Remark~\ref{rem:reflection}); outside this region its
identifiability is open (Remark~\ref{rem:2Dradial}). We recommend
$\ell\ge1$ or mixed angular modes.

\begin{enumerate}
\item \textbf{Assign branches.} Identify two resonances $k^{(1)},k^{(2)}$ of the same mode $\ell$ (by continuity, spacing asymptotics, or a calibration perturbation; cf.\ Remark~\ref{rem:assignment}).
\item \textbf{Build the bilinear system.} Evaluate $A_i,B_i,C_i,D_i$ from Lemma~\ref{lem:bilinear} at $k^{(1)},k^{(2)}$.
\item \textbf{Compute candidates.} Form $p_2,p_1,p_0$ by \eqref{eq:p-coeffs}, solve \eqref{eq:quadratic}, and retain positive candidates satisfying \eqref{eq:physical}.
\item \textbf{Compute sensitivity.} For each candidate, form $\phi$ by \eqref{eq:phi}, compute $b_i^{(j)}$ by \eqref{eq:mass-computable}, and assemble $J$ in \eqref{eq:Jacobian}.
\item \textbf{Reject ill-posed pairs.} If \eqref{eq:identifiability} fails or $\kappa_2(J)$ exceeds a prescribed threshold, replace one resonance by another branch or mode and repeat.
\item \textbf{Resolve ambiguity.} If two positive candidates remain, insert a third resonance $k^{(3)}$ into \eqref{eq:bilinear} or use a second mode $\ell'$; select the smallest residual.
\item \textbf{Report uncertainty.} Return $(\alpha_1,\alpha_2)$ with first-order estimate
\[
\|\delta\alpha\|_2
\lesssim
\|J^{-1}\|_2\,\varepsilon
\sqrt{(k^{(1)})^2+(k^{(2)})^2}.
\]
\end{enumerate}
\end{algorithm}

\section{Numerical experiments}\label{sec:numerics}
All computations use the shell $R_1=1$, $R_2=2$ with source at
$r_0=1.5$ in double precision. For a truncation order $N$ write
$G_N=\phi_d+v_N$ and define the boundary residual
\begin{equation}\label{eq:residual}
\rho_i(N)=\big\|\mathcal B_iG_N\big\|_{L^\infty(\{|x|=R_i\})},\qquad i=1,2.
\end{equation}

\subsection{Forward validation and spectral convergence}
Figure~\ref{fig:forward} shows $|G|$ in the two-dimensional annulus and on
the $xz$-section of the three-dimensional shell, together with the
convergence of the modal series against reference truncations
$N_{\rm ref}=150$ (2D) and $L_{\rm ref}=80$ (3D). The residuals
\eqref{eq:residual}, measured as the maximum of $|\mathcal B_iG_N|$ over
$400$ equispaced boundary points, remain at the $10^{-16}$ level for
every $N$ (each modal system \eqref{eq:modal-system} is solved exactly,
so both Robin conditions hold to machine precision; the residuals are
omitted from Figure~\ref{fig:forward}(c) since they sit at round-off).
The field error decays geometrically at the rate
$\max(R_1/r_0,r_0/R_2)^N=(3/4)^N$ predicted by
Remark~\ref{rem:convergence}; the initial plateau of the
three-dimensional curve in Figure~\ref{fig:forward}(c) is the
pre-asymptotic regime in which the decay sets in only once the
truncation exceeds the angular localisation scale $kr_0$ of the source.
Reciprocity $G(x,x_0)=G(x_0,x)$ was checked to
$10^{-14}$ on interior point pairs.

\begin{figure}[htbp]
\centering
\includegraphics[width=\textwidth]{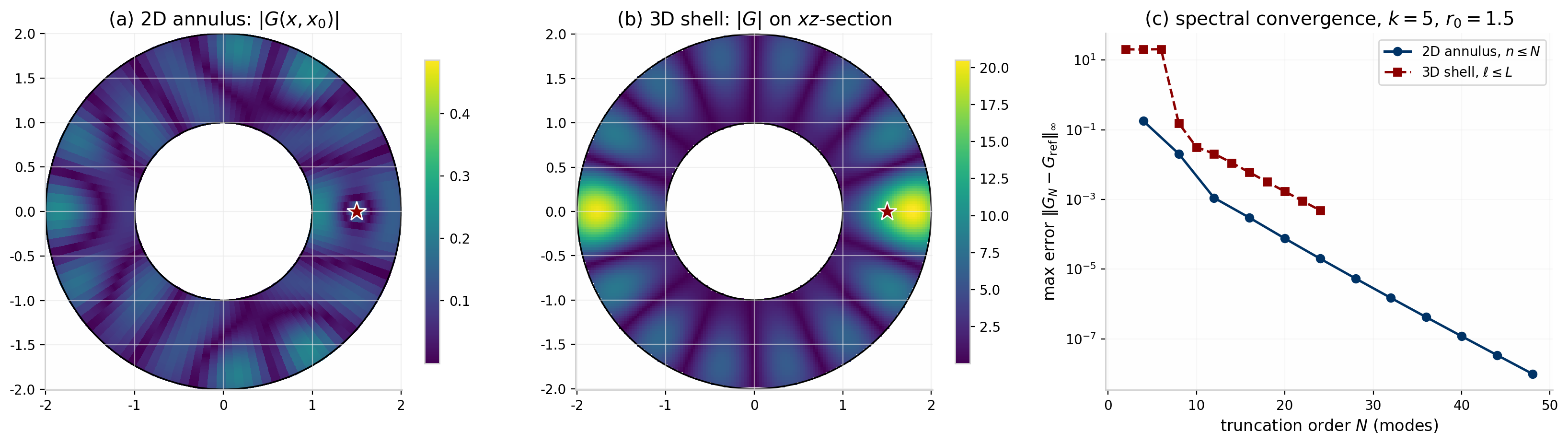}
\caption{Forward benchmark, $k=5$, $\alpha_1=1$, $\alpha_2=2$, $r_0=1.5$.
(a) Two-dimensional annulus $|G|$. (b) Three-dimensional shell $|G|$ on
the $xz$-section ($\star$: source). (c) Spectral convergence of the
modal series against $N_{\rm ref}=150$ (2D) and $L_{\rm ref}=80$ (3D);
the dashed slope corresponds to the rate $(3/4)^N$ of
Remark~\ref{rem:convergence}.}
\label{fig:forward}
\end{figure}

\subsection{Resonance branches, transparent interval, monotonicity}
Figure~\ref{fig:spectrum}(a) plots $\Delta_\ell$ for $\ell=0,1,2$ at
$\alpha_1=\alpha_2=0.5$; the first zeros
$k_{c,0}=0.9882$, $k_{c,1}=1.3586$, $k_{c,2}=1.8788$ confirm
$k_c=k_{c,0}$, and the shaded interval $(0,k_c)$ is resonance-free.
Panels (b)--(c) track the fundamental branch along the diagonal and the
fixed-impedance cross-sections: $k_{0,1}$ increases strictly from its
Neumann--Neumann value (the zero mode $k_{0,1}(0,0)=0$, cf.\ Remark~\ref{rem:zeromode}) to the Dirichlet--Dirichlet limit, in agreement
with Theorem~\ref{thm:basic}. Panel (d) compares the numerical threshold $k_c$ with the leading-order
term $15\eta/7$ of \eqref{eq:kc} and with the global rational fit of
Remark~\ref{rem:global-rational} over $\eta\in[3\times10^{-3},100]$
(horizontal axis: the common impedance $\eta=\alpha_1=\alpha_2$):
the squared threshold reproduces the leading-order estimate to within
$0.06\%$ at $\eta=3\times10^{-3}$ and to within $0.2\%$ for
$\eta\le10^{-2}$, in agreement with the second-order expansion
\eqref{eq:kc-2nd}; for larger $\eta$ the bare leading-order estimate
deviates as predicted (relative error $375\%$ at $\eta=100$), while the
rational fit agrees with the exact $k_c$ to within $0.03\%$ over the
whole range and approaches the Dirichlet--Dirichlet limit
$k_c^{DD}=\pi/\delta$.

\begin{figure}[htbp]
\centering
\includegraphics[width=0.7\textwidth]{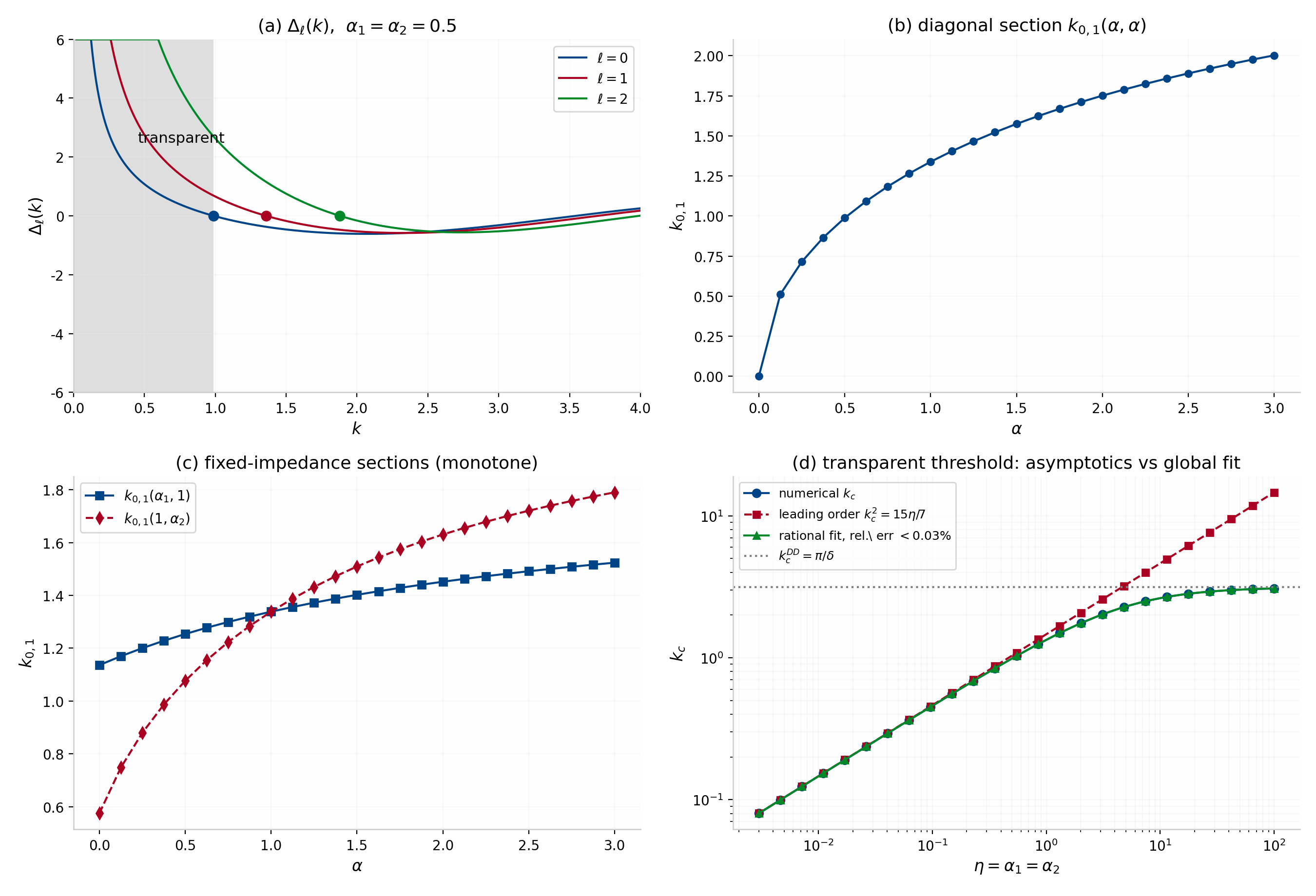}
\caption{Resonance map and transparent band. (a) $\Delta_\ell(k)$ for the
three-dimensional shell in the spherical-Bessel normalisation of
\eqref{eq:B1-3D}--\eqref{eq:B2-3D},
$\ell=0,1,2$, $\alpha_1=\alpha_2=0.5$ ($\eta=0.5$ in the notation of
Remark~\ref{rem:global-rational}); dots mark zeros, shaded band
$(0,k_c)$ is resonance-free. (b) Diagonal section $k_{0,1}(\alpha,\alpha)$.
(c) Fixed-impedance sections $k_{0,1}(\alpha_1,1)$ and $k_{0,1}(1,\alpha_2)$.
(d) Numerical $k_c$ versus the leading-order estimate $k_c^2=15\eta/7$
and the rational fit of Remark~\ref{rem:global-rational} over
$\eta\in[3\times10^{-3},100]$; the dotted line marks
$k_c^{DD}=\pi/\delta$.}
\label{fig:spectrum}
\end{figure}

\subsection{High-frequency spacing law}
Figure~\ref{fig:spacing} verifies Theorem~\ref{thm:spacing} for the
three-dimensional shell with $\alpha_1=1$, $\alpha_2=2$. The raw spacing
approaches the Weyl value $\pi/\delta=\pi$ from below (the first
spacings of the $\ell=0$ branch are $2.339, 2.818, 2.992, 3.059$), and
the rescaled residual $(\pi/\delta-s_p)\,k_p^2\delta^2/(\pi S_{\ell,3})$ converges
to $1$ with $S_{\ell,3}=\alpha_1+\alpha_2+(1+\ell(\ell+1)/2)\delta/(R_1R_2)$
as justified by Lemma~\ref{lem:Delta-asym}, which shows that the amplitude corrections of the large-argument Bessel expansions assemble into the curvature constant $K_{\ell,3}$.
The relative error of the two-term formula decays as $O(p^{-3})$ (Lemma~\ref{lem:odd} rules out the competing $O(p^{-2})$ alternating term),
with
log-log slope matching the reference line. The two-dimensional annulus
obeys the same law with $K_{n,2}=(4n^2+3)/8$: for $n=0,1,2$ and
$\alpha_1=1$, $\alpha_2=2$ the rescaled residual
$(\pi/\delta-s_p)k_p^2\delta^2/(\pi S_{n,2})$ starts at
$1.11,\,1.27,\,1.37$ respectively and decreases monotonically toward
$1$, in agreement with \eqref{eq:spacing}.

\begin{figure}[htbp]
\centering
\includegraphics[width=\textwidth]{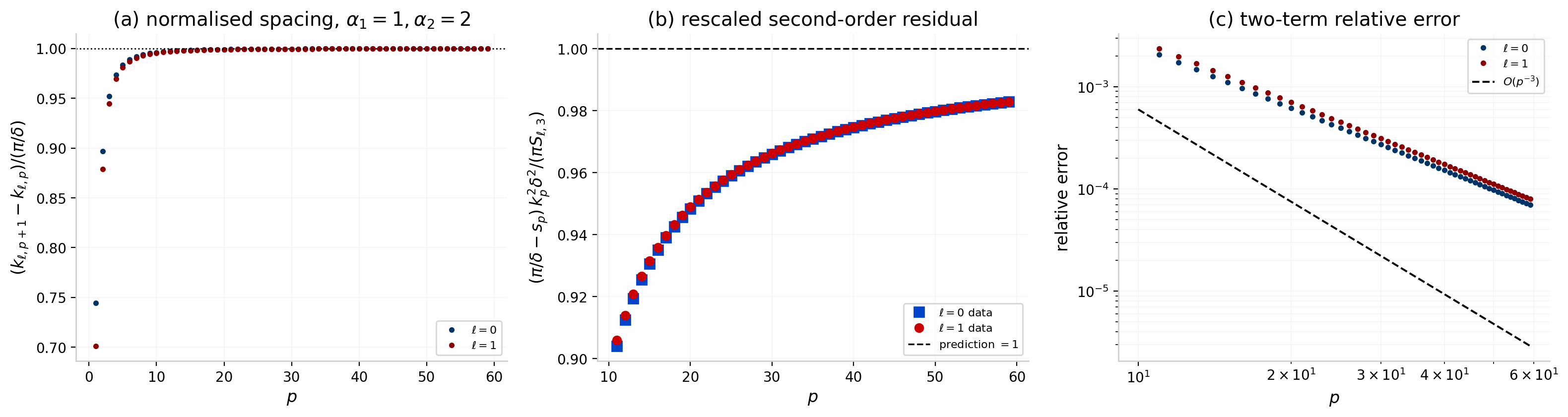}
\caption{High-frequency spacing, $\alpha_1=1$, $\alpha_2=2$.
(a) Normalised spacing $(k_{\ell,p+1}-k_{\ell,p})/(\pi/\delta)$.
(b) Rescaled second-order residual against prediction $=1$.
(c) Relative error of the two-term formula; reference slope $-3$.}
\label{fig:spacing}
\end{figure}

\subsection{Corner asymptotics}\label{sec:corners-num}
Table~\ref{tab:corners} and Figure~\ref{fig:corners} verify
Theorem~\ref{thm:corners} for the three-dimensional shell $R_1=1$,
$R_2=2$. The four limiting eigenvalues are computed as the first roots
of $\mathrm D_\ell$ (N--N), $\mathrm C_\ell$ (N--D), $\mathrm B_\ell$
(D--N) and $\mathrm A_\ell$ (D--D), cf.\ \eqref{eq:corner-eqs}; the
analytic first-order coefficients are evaluated from the boundary
masses \eqref{eq:bmass} and the squared normal derivatives of the
limiting eigenfunctions, and the mixed coefficient from
\eqref{eq:mixed-coeff}. The fitted coefficients are obtained by least
squares along approach rays---$\alpha_1=\alpha_2=\eta\downarrow0$
(N--N), $\alpha_2=1/\varepsilon$, $\alpha_1\downarrow0$ (N--D),
$\alpha_1=1/\varepsilon$, $\alpha_2\downarrow0$ (D--N), and
$\alpha_1=\alpha_2=1/\mu\uparrow\infty$ (D--D).

Two points are worth emphasising. First, the D--D mixed coefficient
cannot be read off from the diagonal ray alone: since the remainder in
Theorem~\ref{thm:corners}(iv) is $O(\mu_1^2+\mu_2^2)$, the diagonal
quadratic coefficient equals $g_{\ell,p}+a_1+a_2$ with unknown pure
quadratic terms $a_i\mu_i^2$ (numerically $\approx +13.9$ for $\ell=1$ here, with positive sign: the quadratic term partly counteracts the negative linear Dirichlet corrections $f_1\mu_1+f_2\mu_2$); $g_{\ell,p}$ itself is extracted from the mixed difference
quotient $k(\mu,\mu)-k(\mu,0)-k(0,\mu)+k(0,0)=g_{\ell,p}\mu^2+O(\mu^3)$,
evaluated at $\mu=10^{-2}$ and $5\times10^{-3}$ and extrapolated
linearly in $\mu$. Second, the zero mode at the N--N corner
(Remark~\ref{rem:zeromode}) is visible in the data: for $\ell=0$ the
fundamental branch emanates from $k=0$ with the square-root law
\eqref{eq:kc}, and the linear N--N asymptotics are verified on the
$p=2$ branch instead (last row of Table~\ref{tab:corners}).

The agreement is uniform. The fitted intercepts of the N--D and D--N
rays reproduce the limiting eigenvalues to four decimal places or
better (e.g.\
$2.304878$ versus $k_{1,1}^{ND}=2.304884$; $1.435612$ versus
$k_{1,1}^{DN}=1.435635$); the fitted first-order coefficients agree
with the boundary-mass values to within $2.7\%$, consistent with the $O(\eta)$, $O(\varepsilon)$ and $O(\mu)$ fit
bias; and the fitted mixed coefficients reproduce \eqref{eq:mixed-coeff}
to within $0.01\%$, confirming the values $g_{\ell,1}\approx
6.28,\,6.20,\,5.96$ quoted in Theorem~\ref{thm:corners}(iv), with
$g_{0,1}=6.28319$ agreeing with $2\pi$ to the displayed accuracy. This is not a numerical coincidence: Theorem~\ref{thm:g0} gives the closed form $g_{0,1}=2\pi/(R_2-R_1)^3$ for every shell, so that in particular $g_{0,1}=2\pi$ for the benchmark shell $R_1=1$, $R_2=2$.

\begin{figure}[htbp]
\centering
\includegraphics[width=\textwidth]{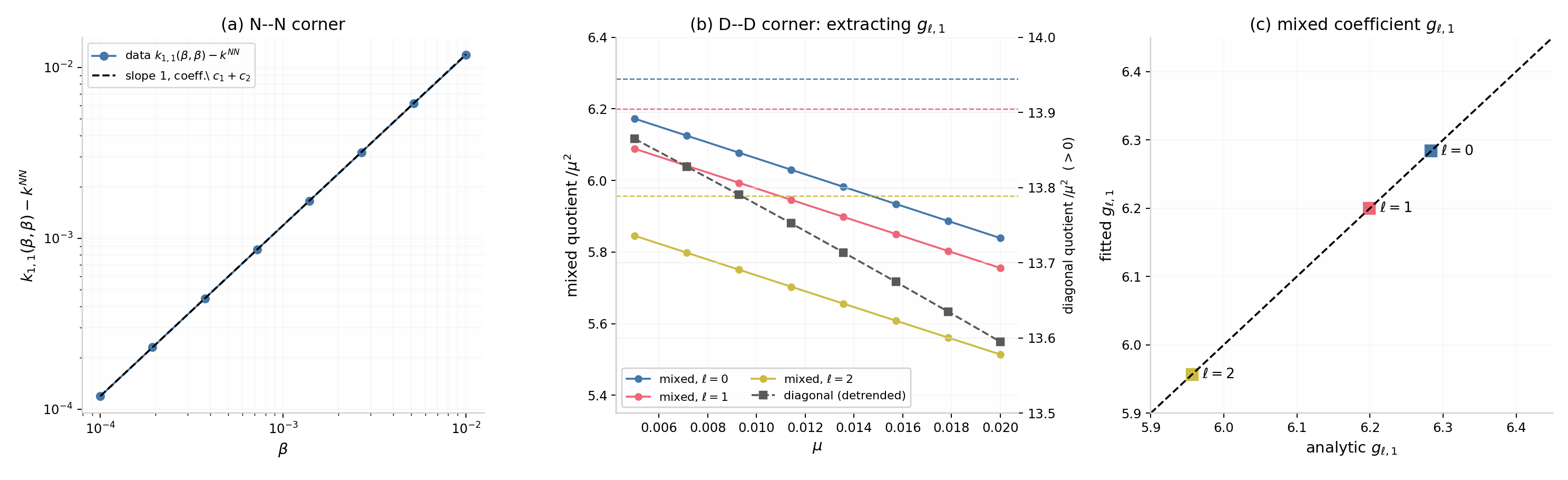}
\caption{Corner asymptotics for the three-dimensional shell $R_1=1$, $R_2=2$ (Table~\ref{tab:corners}). (a) N--N approach: $k_{1,1}(\eta,\eta)-k_{1,1}^{NN}$ versus $\eta$ on log--log scales; the dashed line has slope one with coefficient
$c_1+c_2$. (b) D--D approach: mixed difference quotient $[k(\mu,\mu)-k(\mu,0)-k(0,\mu)+k(0,0)]/\mu^{2}$ for $\ell=0,1,2$ (markers) converging, as $\mu\downarrow0$, to the analytic values \eqref{eq:mixed-coeff} (dashed horizontal lines); for comparison, the detrended diagonal quotient $[k(\mu,\mu)-k^{DD}-(f_1+f_2)\mu]/\mu^{2}$ for $\ell=1$ (grey squares, right axis, which carries the computed positive values $13.6$--$13.9$) estimates $g_{1,1}+a_1+a_2\approx+13.9$. (c) Analytic versus fitted mixed coefficients of Table~\ref{tab:corners} (identity line).}
\label{fig:corners}
\end{figure}

\begin{table}[htbp]
\centering
\renewcommand{\arraystretch}{1.25}
\caption{Corner asymptotics of Theorem~\ref{thm:corners} for the
three-dimensional shell $R_1=1$, $R_2=2$: analytic values (first-order
coefficients from the boundary masses \eqref{eq:bmass} and squared
normal derivatives of the limiting eigenfunctions; mixed coefficient
from \eqref{eq:mixed-coeff}) versus least-squares fits along approach
rays. The deviation of the fitted first-order coefficients (at most
$2.7\%$) is consistent with the $O(\eta)$, $O(\varepsilon)$,
$O(\mu)$ bias of the fits. All rows use the fundamental branch except
the last one ($p=2$).}
\label{tab:corners}\vspace{0.3cm}
\centering\setlength{\tabcolsep}{3pt}
\begin{tabular}{c|l|c|c|c}
\hline
$(\ell,p)$ & quantity & analytic & fitted & rel.\ dev.\\
\hline
 & $k^{NN}$; $c_1$; $c_2$ & $0.920134$; $0.20286$; $0.98971$ & $0.920135$; $0.20319$; $0.98129$ & ---; $0.2\%$; $0.9\%$\\
 & $k^{ND}$; $d_1$; $d_2$ & $2.304884$; $0.27860$; $-1.61387$ & $2.304878$; $0.27500$; $-1.60671$ & ---; $1.3\%$; $0.4\%$\\
$(1,1)$ & $k^{DN}$; $e_1$; $e_2$ & $1.435635$; $-1.27642$; $0.86867$ & $1.435612$; $-1.24760$; $0.84705$ & ---; $2.3\%$; $2.5\%$\\
 & $g_{1,1}$ & $6.19929$ & $6.19971$ & $0.007\%$\\
\hline
 & $k^{NN}$; $c_1$; $c_2$ & $1.575588$; $0.08974$; $0.64120$ & $1.575589$; $0.09007$; $0.63825$ & ---; $0.4\%$; $0.5\%$\\
 & $k^{ND}$; $d_1$; $d_2$ & $2.769322$; $0.21017$; $-1.56006$ & $2.769318$; $0.20765$; $-1.55428$ & ---; $1.2\%$; $0.4\%$\\
$(2,1)$ & $k^{DN}$; $e_1$; $e_2$ & $1.858787$; $-0.85250$; $0.69339$ & $1.858768$; $-0.82914$; $0.68014$ & ---; $2.7\%$; $1.9\%$\\
 & $g_{2,1}$ & $5.95656$ & $5.95661$ & $0.001\%$\\
\hline
$(0,1)$ & $k^{DD}$; $g_{0,1}$ & $3.141593$; $6.28319$ & $3.141593$; $6.28380$ & ---; $0.01\%$\\
\hline
$(0,2)$ & $k^{NN}$; $c_1+c_2$ & $6.360678$; $0.30605$ & $6.360679$; $0.30589$ & ---; $0.05\%$\\
\hline
\end{tabular}
\end{table}

\subsection{Exact and noisy impedance recovery}
Figure~\ref{fig:recovery} and Table~\ref{tab:recovery} test
Proposition~\ref{prop:recovery} on a $15\times15$ grid over
$(\alpha_1,\alpha_2)\in[0.2,5]^2$, using the first three resonances of
the mode $\ell=1$ (3D) and $n=1$ (2D): the first two define the quadratic
\eqref{eq:quadratic} and the third selects the physical root
(cf.\ Remark~\ref{rem:two-candidates}). Noiseless recovery
is exact to solver precision (median $3.0\times10^{-15}$ in 3D and
$4.5\times10^{-15}$ in 2D). Under relative frequency noise
$k^{(j)}\!\to\!k^{(j)}(1+\varepsilon\eta_j)$, $\eta_j\sim\mathcal N(0,1)$,
the error grows linearly in $\varepsilon$, in agreement with
\eqref{eq:error-bound} (the $\varepsilon=10^{-2}$ columns are a stress
test: the errors are then comparable to the impedances themselves, so
the first-order bound no longer applies quantitatively); the $p95$ error
and failure rate at $\varepsilon=10^{-2}$ are larger in 2D, whose
condition number reaches $1.8\times10^3$ (versus $3.2\times10^2$ in 3D).
In both cases $96$--$97\%$ of the impedance plane is well conditioned
($\kappa_2(J)<100$), the ill-conditioned cells concentrating in the
near-diagonal strip $\alpha_1\approx\alpha_2$, exactly where
$b_1\approx b_2$ and the two columns of \eqref{eq:Jacobian} become
nearly parallel (Proposition~\ref{prop:conditioning}; see also
Theorem~\ref{thm:corners}). A second admissible root occurred at most
once per grid point and was always removed by the third resonance.

\begin{figure}[htbp]
\centering
\includegraphics[width=0.95\textwidth]{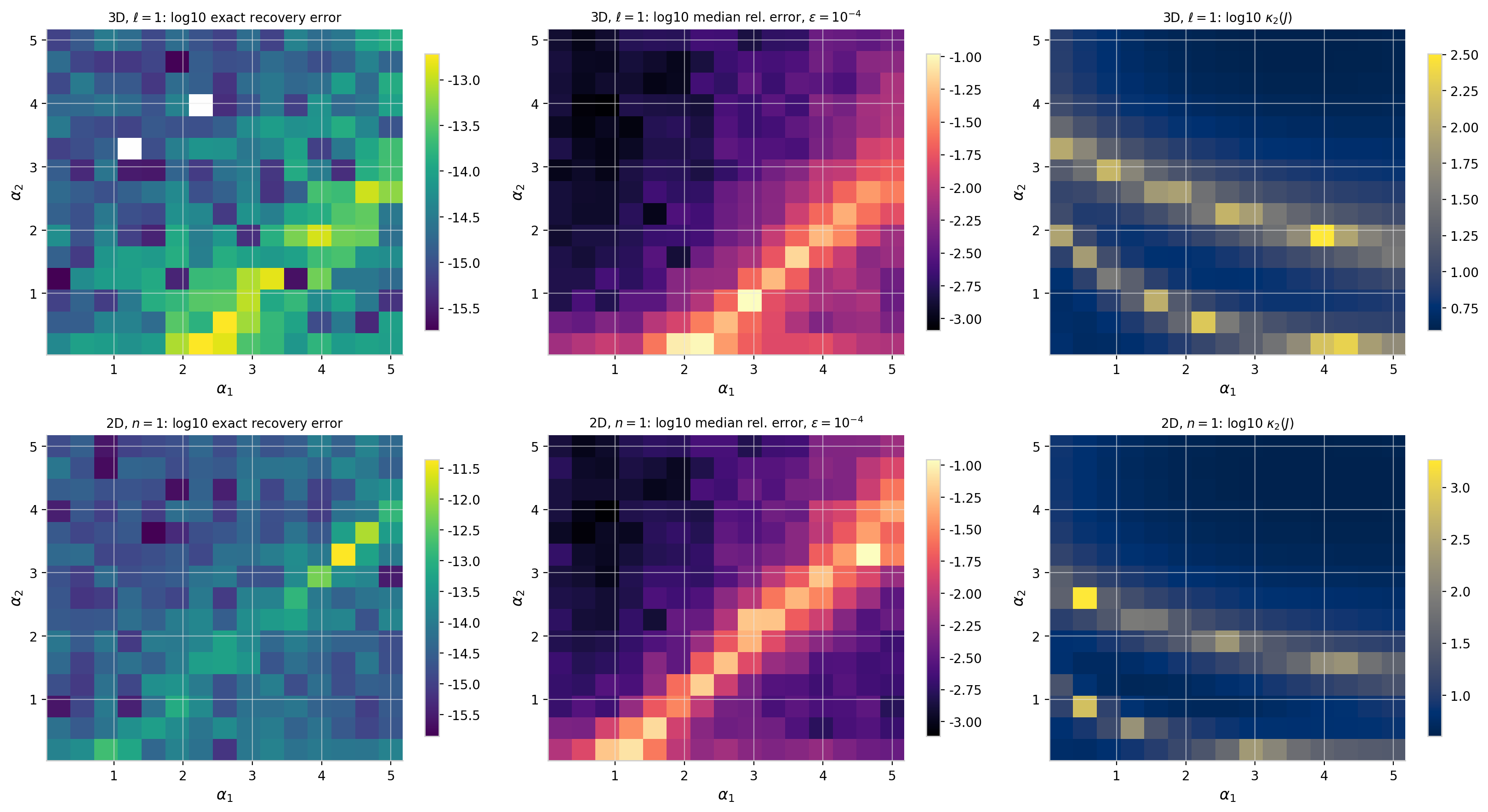}
\caption{Resonance-based impedance recovery. Rows: 3D shell, mode
$\ell=1$ (top); 2D annulus, mode $n=1$ (bottom). Columns: exact
recovery error (left); median relative error at $\varepsilon=10^{-4}$
over 30 realisations (middle); $\kappa_2(J)$ from \eqref{eq:Jacobian}
(right).}
\label{fig:recovery}
\end{figure}
\vspace{-0.3cm}

\begin{table}[htbp]
\centering
\renewcommand{\arraystretch}{1.3} 
\caption{Recovery statistics on the $15\times15$ grid over
$[0.2,5]^2$ using the first two resonances for the quadratic and a
third for disambiguation; noise model $k^{(j)}\!\to\!k^{(j)}
(1+\varepsilon\eta_j)$, 30 realisations per point. The last column is
the fraction of grid points at which the relative recovery error in
$(\alpha_1,\alpha_2)$ exceeded $0.5$ in at least one realisation.}
\label{tab:recovery}\vspace{0.3cm}
\begin{tabular}{c|c|c|c|c}
\hline
case & noise $\varepsilon$ & median rel.\ err. & p95 rel.\ err. &
frac.\ err.$>0.5$\\
\hline
3D, $\ell=1$ & 0        & $3.0\times10^{-15}$ & $4.7\times10^{-14}$ & --\\
3D, $\ell=1$ & $10^{-4}$& $3.0\times10^{-3}$  & $3.5\times10^{-2}$  & 0.024\\
3D, $\ell=1$ & $10^{-2}$& $3.0\times10^{-1}$  & $2.5\times10^{0}$   & 0.36\\
\hline
2D, $n=1$    & 0        & $4.5\times10^{-15}$ & $5.1\times10^{-14}$ & --\\
2D, $n=1$    & $10^{-4}$& $3.0\times10^{-3}$  & $1.4\times10^{-1}$  & 0.043\\
2D, $n=1$    & $10^{-2}$& $4.0\times10^{-1}$  & $4.3\times10^{0}$   & 0.44\\
\hline
\end{tabular}
\end{table}

\begin{remark}[Reflection pairs are high-frequency blind]
The reflection map of Remark~\ref{rem:reflection} preserves the
impedance sum $\alpha_1+\alpha_2$, and the spacing correction
\eqref{eq:S} depends on $(\alpha_1,\alpha_2)$ only through this sum.
Numerically, the $\ell=1$ spectra of the reflection pair $(2,3)$ and
$(1.5,3.5)$ differ by $7.7\times10^{-3}$ in the first resonance but by
less than $5\times10^{-4}$ in the second and third. Consequently
high-frequency resonances of one non-radial mode resolve
reflection-related ambiguity poorly, and recovery should rely on the low-frequency spectrum, where the two admissible pairs are maximally separated. The decay of the discrepancy with the resonance index is consistent with the leading quantisation condition \eqref{eq:Delta-large-k}, in which the impedances enter only through the reflection-invariant sum $S_{\ell,d}=\alpha_1+\alpha_2+K_{\ell,d}\delta/\rho$; reflection-sensitive corrections appear only in the higher-order terms of the large-$k$ expansion.
\end{remark}

\begin{remark}[Practical branch assignment]\label{rem:assignment}
Branch assignment is most reliable at the \emph{lowest} resonances, for
two quantitative reasons. First, the transparent threshold $k_c$ is
computable from \eqref{eq:kc} a priori, and no resonance of any angular
order lies below it, so the fundamental resonance of each mode is
separated from the spectral edge by a controlled margin. Second,
reflection-related ambiguity is concentrated at low frequency (cf. the
preceding remark: the first resonances of the pair $(2,3)$ and its
reflection partner $(1.5,3.5)$ differ by $7.7\times10^{-3}$, the higher
ones by less than $5\times10^{-4}$). Since all branches share the
same asymptotic spacing $\pi/\delta$, high-frequency resonances of
different angular orders interlace ever more tightly and cannot be
assigned reliably from spacing data alone. We therefore recommend
assigning branches at the bottom of the spectrum (using the $k_c$
threshold and the monotonicity of $k_{c,\ell}$ in $\ell$), and using the
higher resonances only as consistency checks; this is also what
Procedure~\ref{alg:recovery} assumes in its first step.
\end{remark}

\section{Conclusion}\label{sec:conclusion}
This paper develops an explicit theory, computable to machine precision, of the
two-impedance Robin Helmholtz problem on annular and spherical-shell
domains. The forward problem is solved in closed form in every dimension
$d\ge2$: Graf's addition theorem and the hyperspherical addition theorem
reduce each angular mode to an explicit $2\times2$ linear system, yielding
series kernels that satisfy both Robin conditions to machine precision and
converge geometrically---at the explicit rate of
Remark~\ref{rem:convergence}---uniformly on the closed shell. Because
the modal coefficients are rational functions of the two impedances, the
dependence of the wave field on the boundary parameters is exposed term
by term, and the classical single-boundary ball formulas are recovered as
degenerate limits.

The same explicitness determines the resonance spectrum quantitatively.
The branches are positive, simple, real-analytic and strictly increasing
in both impedances; their asymptotic scaling at the four corners of the
impedance plane is resolved to first order, with coefficients given by
the boundary masses and normal derivatives of the limiting
eigenfunctions, and the mixed second-order coefficient at the
Dirichlet--Dirichlet corner is given explicitly by \eqref{eq:mixed-coeff}
(nonzero in general). At low frequencies the determinant admits an
explicit two-term expansion whose balance yields a rigorous second-order
small-impedance expansion of the threshold of the transparent interval,
extended by a global rational approximation accurate over the entire
impedance range. At high frequencies a complete large-wavenumber
expansion produces the universal spacing $\pi/(R_2-R_1)$ together with a
second-order curvature correction coupling the impedance sum to the
shell geometry through $K_{\ell,d}$; the $O(p^{-3})$ error term is
secured by the parity argument of Lemma~\ref{lem:odd}.

The bilinearity of the characteristic determinant then turns
resonance-based impedance recovery into an algebraic problem: two
resonant frequencies of one non-radial angular mode generate at most two
candidate impedance pairs through an explicit quadratic equation, with
the physical pair selected by a third resonance, an exact
reflection-symmetry obstruction delimiting the region in which the
radial spherical mode is unidentifiable, and
a Jacobian criterion, based on the Hellmann--Feynman boundary masses,
that selects well-conditioned resonance pairs and predicts the noise
amplification factor; local uniqueness is characterised exactly by this
criterion. Numerical
benchmarks confirm every regime: machine-precision boundary residuals
and geometric convergence for the forward kernel, agreement of the
transparent threshold with the rational fit to within $0.03\%$ over
$\eta\in[3\times10^{-3},100]$, the predicted $O(p^{-3})$ decay in the
spacing law, corner asymptotics matching the boundary-mass coefficients
to within the fit bias with the mixed coefficient reproduced to $0.01\%$,
and exact recovery to solver precision with error growing
linearly in the noise level and concentrated, as predicted, in the
near-diagonal ill-conditioned strip.

Natural extensions include complex impedances with $\mathrm{Re}(\alpha_i)>0$ and
the associated non-self-adjoint resonance picture, multilayer or
eccentric shells, time-domain transforms of the explicit kernels, and
stochastic recovery under uncertain geometry, where the conditioning
criteria of Section~\ref{sec:sensitivity} provide a quantitative
starting point.

\section*{Statements and Data Availability}

The author declares that he has no conflict of interest.
All numerical results reported in this paper were generated by the
author; source code reproducing all figures and tables is available
from the author upon reasonable request.

\section*{Acknowledgements}
This work is supported by the Jiangsu Provincial Scientific Research Center of Applied Mathematics under Grant No. BK20233002.

\appendix

\section{Two-dimensional annulus: complete working formulas}\label{app:2D}
For $d=2$, $\nu=0$ and it is simplest to use the Fourier basis $e^{in\theta}$, $n\in\mathbb Z$. The free solution is
\[
\phi_2(x-x_0)=\frac{i}{4}H_0^{(1)}(k|x-x_0|),
\]
and Graf's addition theorem gives
\[
\phi_2(x-x_0)=
\begin{cases}
\displaystyle \frac{i}{4}\sum_{n\in\mathbb Z} H_n^{(1)}(kr_0)J_n(kr)e^{in(\theta-\theta_0)},& r<r_0,\\[1em]
\displaystyle \frac{i}{4}\sum_{n\in\mathbb Z} J_n(kr_0)H_n^{(1)}(kr)e^{in(\theta-\theta_0)},& r>r_0.
\end{cases}
\]
For $Z_n\in\{J_n,Y_n,H_n^{(1)}\}$ define
\begin{align}
\mathcal B_1[Z_n]
&=
\left(\alpha_1-\frac{n}{R_1}\right)Z_n(kR_1)+kZ_{n+1}(kR_1),\label{eq:B1-2D}\\
\mathcal B_2[Z_n]
&=
\left(\alpha_2+\frac{n}{R_2}\right)Z_n(kR_2)-kZ_{n+1}(kR_2).\label{eq:B2-2D}
\end{align}
The correction is
\[
v(r,\theta)=
\sum_{n\in\mathbb Z}
\left(A_nJ_n(kr)+B_nY_n(kr)\right)e^{in(\theta-\theta_0)} .
\]
Matching Fourier coefficients gives
\[
\begin{pmatrix}
\mathcal B_1[J_n] & \mathcal B_1[Y_n]\\
\mathcal B_2[J_n] & \mathcal B_2[Y_n]
\end{pmatrix}
\begin{pmatrix}A_n\\ B_n\end{pmatrix}
=
\begin{pmatrix}
-\dfrac{i}{4}H_n^{(1)}(kr_0)\mathcal B_1[J_n]\\[6pt]
-\dfrac{i}{4}J_n(kr_0)\mathcal B_2[H_n^{(1)}]
\end{pmatrix}.
\]
Hence
\begin{align}
A_n&=
\frac{i}{4\Delta_n}
\left(
J_n(kr_0)\mathcal B_2[H_n^{(1)}]\,\mathcal B_1[Y_n]
-
H_n^{(1)}(kr_0)\mathcal B_1[J_n]\,\mathcal B_2[Y_n]
\right),\label{eq:An-2D}\\
B_n&=
\frac{i}{4\Delta_n}
\left(
H_n^{(1)}(kr_0)\mathcal B_1[J_n]\,\mathcal B_2[J_n]
-
J_n(kr_0)\mathcal B_1[J_n]\,\mathcal B_2[H_n^{(1)}]
\right),\label{eq:Bn-2D}
\end{align}
with
\begin{equation}\label{eq:Delta-n-2D}
\Delta_n=
\mathcal B_1[J_n]\,\mathcal B_2[Y_n]
-
\mathcal B_2[J_n]\,\mathcal B_1[Y_n].
\end{equation}
Therefore the two-dimensional Robin Green's function is
\begin{equation}\label{eq:G-2D}
G(x,x_0)=
\frac{i}{4}H_0^{(1)}(k|x-x_0|)
+
\sum_{n\in\mathbb Z}
\left(A_nJ_n(kr)+B_nY_n(kr)\right)e^{in(\theta-\theta_0)}
\end{equation}
with $A_n,B_n$ given by \eqref{eq:An-2D}--\eqref{eq:Bn-2D}. If one prefers to sum over $n\ge0$, use
$J_{-n}=(-1)^nJ_n$, $Y_{-n}=(-1)^nY_n$, and $H_{-n}^{(1)}=(-1)^nH_n^{(1)}$, keeping the $n=0$ term undoubled.

\section{Proofs of the spectral lemmas}\label{app:proofs}

\subsection{Proof of Lemma~\ref{lem:Delta-asym}}\label{app:Delta-asym}

Write $\theta_\ell(z)=z-\ell\pi/2$ and $K_\ell=1+\frac{\ell(\ell+1)}{2}$.
From \cite[\S10.17]{Olver2010},
\begin{align*}
&j_\ell(z)=\frac{\sin\theta_\ell(z)}{z}
+\frac{\ell(\ell+1)}{2}\frac{\cos\theta_\ell(z)}{z^2}+O(z^{-3}),\\
&y_\ell(z)=-\frac{\cos\theta_\ell(z)}{z}
+\frac{\ell(\ell+1)}{2}\frac{\sin\theta_\ell(z)}{z^2}+O(z^{-3}),\\
&j_{\ell+1}(z)=-\frac{\cos\theta_\ell(z)}{z}
+\frac{(\ell+1)(\ell+2)}{2}\frac{\sin\theta_\ell(z)}{z^2}+O(z^{-3}),\\
&y_{\ell+1}(z)=-\frac{\sin\theta_\ell(z)}{z}
-\frac{(\ell+1)(\ell+2)}{2}\frac{\cos\theta_\ell(z)}{z^2}+O(z^{-3}).
\end{align*}
Substituting these into \eqref{eq:B1-3D}--\eqref{eq:B2-3D} gives the
four trace expansions ($\theta_i=\theta_\ell(kR_i)$, $z_i=kR_i$)
\begin{align*}
\mathcal B_1[j_\ell]
&=-\frac{\cos\theta_1}{R_1}
+\frac{(\alpha_1R_1+K_\ell)\sin\theta_1}{kR_1^2}+O(k^{-2}),
&
\mathcal B_1[y_\ell]
&=-\frac{\sin\theta_1}{R_1}
-\frac{(\alpha_1R_1+K_\ell)\cos\theta_1}{kR_1^2}+O(k^{-2}),\\
\mathcal B_2[j_\ell]
&=\frac{\cos\theta_2}{R_2}
+\frac{(\alpha_2R_2-K_\ell)\sin\theta_2}{kR_2^2}+O(k^{-2}),
&
\mathcal B_2[y_\ell]
&=\frac{\sin\theta_2}{R_2}
+\frac{(K_\ell-\alpha_2R_2)\cos\theta_2}{kR_2^2}+O(k^{-2}).
\end{align*}
In particular the traces are $O(1)$, each carrying an explicit
$O(k^{-1})$ correction. Since $\theta_2-\theta_1=k\delta$, the leading
terms give
\[
-\frac{\cos\theta_1}{R_1}\frac{\sin\theta_2}{R_2}
+\frac{\cos\theta_2}{R_2}\frac{\sin\theta_1}{R_1}
=
-\frac{\sin(\theta_2-\theta_1)}{R_1R_2}
=
-\frac{\sin(k\delta)}{R_1R_2}.
\]
At order $k^{-1}$, both products contribute the \emph{same} coefficient,
\[
\frac{(\alpha_1+\alpha_2)\rho+K_\ell\delta}{R_1^2R_2^2},
\]
once to $\cos\theta_1\cos\theta_2$ and once to
$\sin\theta_1\sin\theta_2$; hence
\begin{align*}
\Delta_\ell
&=
\frac{(\alpha_1+\alpha_2)\rho+K_\ell\delta}{kR_1^2R_2^2}
\bigl(\cos\theta_1\cos\theta_2+\sin\theta_1\sin\theta_2\bigr)
+O(k^{-2})\\
&=
\frac{1}{R_1R_2}\left[
-\sin(k\delta)
+\frac{(\alpha_1+\alpha_2)+K_\ell\,\delta/\rho}{k}\cos(k\delta)
+O\big((kR)^{-2}\big)\right],
\end{align*}
because $\cos\theta_1\cos\theta_2+\sin\theta_1\sin\theta_2=
\cos(\theta_2-\theta_1)=\cos(k\delta)$: the $\cos(k(R_1+R_2))$ terms
cancel identically. This proves \eqref{eq:Delta-asym-d3}. The
general-$d$ statement follows from the identical computation with
$J_\mu,Y_\mu$ in place of $j_\ell,y_\ell$, for which the centrifugal
shift $\mu=\ell+\nu$ produces
$K_{\ell,d}=((2\ell+d-2)^2+4d-5)/8$; the common factor
$C(k;R_1,R_2)$ is exactly the one recorded in the lemma.

\subsection{Proof of Lemma~\ref{lem:odd}: no even inverse powers}\label{app:odd}

Introduce Pr\"ufer variables $w=\rho\sin\theta$, $w'=k\rho\cos\theta$ with $\theta$ continuous. Then \eqref{eq:normal-form} gives the exact phase equation
\begin{equation}\label{eq:prufer}
\theta'=k-\frac{Q(r)}{k}\sin^2\theta ,
\end{equation}
and the boundary conditions read $\cot\theta(R_1)=\gamma_1/k$, $\cot\theta(R_2)=-\gamma_2/k$. Since the $p$-th eigenfunction has exactly $p-1$ interior zeros,
\[
\theta(R_2)-\theta(R_1)=p\pi+\arctan\frac{\gamma_1}{k}+\arctan\frac{\gamma_2}{k},
\]
and integrating \eqref{eq:prufer} yields the exact quantization identity
\begin{equation}\label{eq:quant}
k\delta=p\pi+\arctan\frac{\gamma_1}{k}+\arctan\frac{\gamma_2}{k}
+\frac1k\int_{R_1}^{R_2}Q(r)\sin^2\theta(r;k)\,dr .
\end{equation}
Write $\theta(r;k)=k(r-R_1)+\psi(r;k)$; then $|\psi|\le k^{-1}\|Q\|_{L^{1}}$ by \eqref{eq:prufer}, and using $\sin^2a-\sin^2b=\sin(a+b)\sin(a-b)$,
\[
\int_{R_1}^{R_2}Q\sin^2\theta\,dr
=
\int_{R_1}^{R_2}Q\sin^2(k(r-R_1))\,dr
+
\int_{R_1}^{R_2}Q\sin(2k(r-R_1))\psi\,dr
+
O(k^{-2}).
\]
For the first integral,
$\int_{R_1}^{R_2}Q\sin^2(k(r-R_1))\,dr=\frac12\int_{R_1}^{R_2}Q\,dr-\frac12\int_{R_1}^{R_2}Q\cos(2k(r-R_1))dr$,
and integration by parts (using $Q\in C^{2}$) gives
\[
\int_{R_1}^{R_2}Q\cos(2k(r-R_1))\,dr
=
\frac{Q(R_2)\sin(2k\delta)}{2k}+O(k^{-2}),
\]
where the $O(k^{-2})$ remainder is a trigonometric polynomial in $k\delta$ with coefficients depending on $Q,Q',Q''$. For the second integral, iterate \eqref{eq:prufer} once:
\[
\psi(r;k)=-\frac{1}{2k}\int_{R_1}^{r}Q(t)\,dt+O(k^{-2})
\]
uniformly in $r$; hence
\[
\int_{R_1}^{R_2}Q\sin(2k(r-R_1))\psi\,dr
=
-\frac{1}{2k}\int_{R_1}^{R_2}Q(r)\Bigl(\int_{R_1}^{r}Q(t)\,dt\Bigr)\sin(2k(r-R_1))\,dr+O(k^{-2})
=
O(k^{-2}),
\]
the leading term being oscillatory and therefore $O(k^{-1})$ by a further integration by parts. Altogether,
\[
\int_{R_1}^{R_2}Q\sin^2\theta\,dr=\frac12\int_{R_1}^{R_2}Q\,dr-\frac{Q(R_2)\sin(2k\delta)}{4k}+J(k),
\qquad J(k)=O(k^{-2}),
\]
where $J$ may contain non-oscillatory terms, which after multiplication by $1/k$ enter the quantization equation only at $O(k^{-3})$. Substituting into \eqref{eq:quant} and using $\arctan x=x+O(x^{3})$,
\[
k\delta=p\pi+\frac{A}{k}-\frac{Q(R_2)\sin(2k\delta)}{4k^{2}}+O(k^{-3}),\qquad A=\gamma_1+\gamma_2+\frac12\int_{R_1}^{R_2}Q\,dr .
\]
At a root $k=k_{p}$, the first-order determinant asymptotics \eqref{eq:Delta-large-k} give $\tan(k_{p}\delta)=S_{\ell,d}/k_{p}+O(k_{p}^{-2})$, hence $\sin(k_{p}\delta)=O(p^{-1})$ and $\sin(2k_{p}\delta)=O(p^{-1})$; the term $-Q(R_2)\sin(2k_{p}\delta)/(4k_{p}^{2})$ is therefore $O(k_{p}^{-3})$ and is absorbed into the remainder, which proves \eqref{eq:odd} with a bounded sequence $\{B_p\}$.

\end{document}